\numberwithin{equation}{section}
\newcommand{\comment}[1]{}
\theoremstyle{plain}
 \newtheorem{theorem}{Theorem}[section]
 \newtheorem{proposition}[theorem]{Proposition}
 \newtheorem{lemma}[theorem]{Lemma}
 \newtheorem{corollary}[theorem]{Corollary}
 \newtheorem{fact}[theorem]{Fact}
\theoremstyle{remark}
 \newtheorem{remark}[theorem]{Remark}
\def\dsp{\displaystyle}
\def\rem#1{}
\newcommand{\C}{\mathbb C}
\newcommand{\R}{\mathbb R}
\renewcommand{\P}{\mathbb P}
\newcommand{\Z}{\mathbb Z}
\newcommand{\CE}{\mathcal E}
\newcommand{\CM}{\mathcal M}
\newcommand{\cCM}{\widetilde{\mathcal M}}
\newcommand{\wM}{\widetilde{M}}
\newcommand{\im}{\mathrm{Im}}
\newcommand{\g}{\gamma}
\newcommand{\De}{\Delta}
\newcommand{\cD}{\widetilde{\Delta}}
\newcommand{\e}{\varepsilon}
\newcommand{\xx}{\mathbf x}
\newcommand{\yy}{\mathbf y}
\newcommand{\zz}{\mathbf z}
\newcommand{\Hl}{H^{\mathrm{lf}}}
\newcommand{\fI}{\mathfrak{I}}
\begin{document}

\title[Schwarz map of reducible $E_2$]
{An example of Schwarz map of reducible hypergeometric equation $E_2$ 
in two variables }

\author{Keiji \textsc{Matsumoto}}
\address{Department of Mathematics, Hokkaido University, 
Sapporo  060-0810 Japan}
\email{matsu@math.sci.hokudai.ac.jp}
\author{Takeshi \textsc{Sasaki}}
\address{Kobe University,   Kobe 657-8501, Japan}
\email{sasaki@math.kobe-u.ac.jp}
\author{Tomohide \textsc{Terasoma}}
\address{Department of Mathematics Tokyo University, 
Tokyo 153-8914 Japan}
\email{terasoma@ms.u-tokyo.ac.jp} 
\author{Masaaki \textsc{Yoshida}}
 \address{Kyushu University,   Fukuoka 819-0395, Japan}
\email{myoshida@math.kyushu-u.ac.jp}
\subjclass[2010]{Primary 33C65}
\keywords{Appell's hypergeometric function, Schwarz map}


\begin{abstract} We study an Appell hypergeometric system $E_2$ of rank four 
which is reducible and show that its Schwarz map admits 
geometric interpretations: the map can be considered as 
the universal Abel-Jacobi map of a $1$-dimensional family of curves of genus 2.
\end{abstract}

\maketitle

\tableofcontents
\par\bigskip

\section*{Introduction}

Schwarz maps for hypergeometric systems in single and several variables 
are studied by several authors (cf. \cite{Yo}) for more than hundred years. 
These systems treated were irreducible, 
maybe because specialists believed that reducible systems would not give 
interesting Schwarz maps. 

We study in this paper Appell's hypergeometric system $E_2$ of 
rank four when its parameters satisfy $a-c\in \Z$ or $a-c'\in \Z$.  
In this case, the system $E_2$ is reducible, and 
has a $3$-dimensional subsystem isomorphic to Appell's $E_1$ 
(Proposition \ref{prop:s2}).
If $a-c,a-c'\in \Z$ then $E_2$ has two such subsystems. 
By Proposition \ref{prop:s2g},  
the intersection of these subsystems 
is equal to the Gauss hypergeometric equation.
As a consequence, we have inclusions on $E_2$, two $E_1$'s
and $E$ (Theorem \ref{matome}).

We give the monodromy representation of the system $E_2$
which can be specialized to the case $a-c,a-c'\in \Z$ 
in Theorem \ref{th:monod-rep}.
As for explicit circuit matrices 
with respect to a basis $\De_1,\dots,\De_4$, see 
Corollary \ref{cor:monod-matrix}.

We further specialize the parameters of the system $E_2$ as 
$$(a,b,b',c,c')
=\big(\dfrac{4}{3},\dfrac{2}{3},\dfrac{2}{3},\dfrac{4}{3},\dfrac{4}{3}\big)$$
in \S \ref {Schmap}.  
In this case, the restriction of its monodromy group 
to the invariant subspace is arithmetic and isomorphic to 
the triangle group of type $[3,\infty,\infty]$. 
We show that its Schwarz map admits geometric interpretations: 
the map can be considered as the universal Abel-Jacobi map 
of a 1-dimensional family of curves of genus 2 in 
Theorem \ref{th:gen-Schwarz}.

The system $E_2$ is equivalent to the restriction of a 
hypergeometric system $E(3,6;a_1,\dots,a_6)$ to 
a two dimensional stratum in the configuration 
space $X(3,6)$ of six lines in the projective plane.
In Appendix \ref{3-dim-S}, we study  a system of hypergeometric differential 
equations in three variables, which is obtained by restricting 
$E(3,6;a_1,\dots,a_6)$ to the three dimensional strata  corresponding to 
configurations only with one triple point. 
The methods to prove Proposition \ref{prop:s2} 
are also applicable to this system under a reducibility condition. 
In Appendix \ref{genus2}, we classify families of genus $2$ branched coverings 
of the projective line, whose period maps yield triangle groups.

In a forthcoming paper \cite{MT}, we study this Schwarz map 
using period domains for Mixed Hodge structures. 
Moreover, we explicitly give its inverse in terms of theta functions.  
 
\section{Some generalities on Appell's systems $E_2$ and $E_1$}
\label{sec:Appell-E1-E2}
Gauss hypergeometric series
$$F(a,b,c;x)=\sum_{i=0}\dsp\frac{(a,i)(b,i)}{(c,i)\ i!}x^i,$$
where $(a,i)=a(a+1)\cdots(a+n-1)$, admits an integral representation:
$$\begin{array}{ll}
\dsp\frac{\Gamma(a)\Gamma(c-a)}{\Gamma(c)}F(a,b,c ;x)&=\dsp\int_0^1t^{a-1}(1-t)^{c-a-1}(1-tx)^{-b}dt\\&={\rm constant}\times\dsp\int_\infty^1t^{b-c}(1-t)^{c-a-1}(x-t)^{-b}dt.\end{array} $$
The function $z=F$ is a solution of the hypergeometric equation 
\[E(a,b,c;x):\qquad  P(a,b,c;x)z=0,\]
where
\[P(a,b,c;x)=D(c-1+D)-x(a+D)(b+D),\qquad D=x\dsp{d\over d x}.\]
The collection of solutions is denoted by $S(a,b,c;x)$.

\par\medskip
Appell's hypergeometric series
$$F_1(a,b,b',c;x,y)=\sum_{i,j=0}\dsp\frac{(a,i+j)(b,i)(b',j)}{(c,i+j)\ i!\ j!}x^iy^j,$$
 admits an integral representation:
$$\frac{\Gamma(a)\Gamma(c-a)}{\Gamma(c)}F_1(a,b,b',c ;x,y)=\int_0^1t^{a-1}(1-t)^{c-a-1}(1-tx)^{-b}(1-ty)^{-b'}dt. $$
The function $z=F_1$ is a solution of the hypergeometric system 
\[E_1(a,b,b',c;x,y):\quad \left\{
\begin{array}{l}
 (D(c-1+D+D')-x(a+D+D')(b+D))z=0, \\[2mm]
 (D'(c-1+D+D')-y(a+D+D')(b'+D'))z=0, 
\end{array}
\right.
\]
where $D=x\partial/\partial x, D'=y\partial/\partial y$, 
which can be written as 
$$P_1(a,b,b',c;x,y)z=Q_1(a,b,b',c;x,y)z=R_1(a,b,b',c;x,y)z=0,$$
where
\begin{eqnarray*}
&P_1(a,b,b',c;x,y)=x(1-x)\partial_{xx}+y(1-x)\partial_{xy}
    +(c-(a+b+1)x)\partial_x-b y\partial_y-ab,& \\[2mm]
&Q_1(a,b,b',c;x,y)=y(1-y)\partial_{yy}+x(1-y)\partial_{yx}
    +(c-(a+b'+1)y)\partial_y-b'x\partial_x-ab', &\\[2mm]
&R_1(a,b,b',c;x,y)=(x-y)\partial_{xy}-b'\partial_x+b\partial_y, 
\end{eqnarray*}
and $\partial_x=\partial/\partial x$, etc. 
The last equation $R_1(a,b,b',c;x,y)z=0$
is derived from the integrability condition of the first two equations.
The collection of solutions is denoted by $S_1(a,b,b',c;x,y)$.
\par\medskip
Appell's hypergeometric series 
$$F_2(a,b,b',c,c';x,y)=\sum_{i,j=0}\dsp\frac{(a,i+j)(b,i)(b',j)}{(c,i)(c',j)\ i!\ j!}x^iy^j$$
admits an integral representation:
$$\frac{\Gamma(b)\Gamma(b')\Gamma(c-b)\Gamma(c'-b')}{\Gamma(c)\Gamma(c')}\times F_2(a,b,b',c,c' ;x,y)$$$$=\int_0^1\int_0^1s^{b-1}t^{b'-1}(1-s)^{c-b-1}(1-t)^{c'-b'-1}(1-sx-ty)^{-a}dsdt. $$
The function $z=F_2$ satisfies the system 
$$E_2(a,b,b',c,c';x,y):\quad  P_2(a,b,b',c,c';x,y)z=Q_2(a,b,b',c,c';x,y)z=0,$$
where
\begin{eqnarray*}
&& P_2(a,b,b',c,c';x,y)=D(c-1+D)-x(a+D+D')(b+D), \\[2mm]
&& Q_2(a,b,b',c,c';x,y)=D'(c'-1+D')-y(a+D+D')(b'+D). 
\end{eqnarray*}
The collection of solutions is denoted by $S_2(a,b,b',c,c';x,y)$.
\subsection{Reducibility conditions for  $E_2$ and $E_1$}
As for the reducibility of the systems $E_2$ and $E_1$, 
the following is known:
\begin{fact}\label{redF2}$(${\rm \cite{Bod}}$)$ 
Appell's system $E_2(a,b,b',c,c')$ is reducible if and only if 
at least one of
$$a,\  b,\ b',\ c-b,\ c'-b',\ c-a,\ c'-a,\  c+c'-a$$ 
is an integer.
\end{fact}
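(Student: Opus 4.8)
The plan is to pass from the $\mathcal D$-module $E_2$ to its monodromy representation and then to read off reducibility from the geometry of the line arrangement attached to the double integral. Since $E_2$ is regular holonomic of rank four, on the complement $U\subset\C^2$ of its singular locus the solution sheaf $S_2$ is a rank-four local system, and $E_2$ is reducible as a $\mathcal D$-module precisely when the monodromy representation $\rho\colon\pi_1(U)\to GL_4(\C)$ admits a proper nonzero invariant subspace. So the entire question reduces to deciding when $\rho$ is reducible.

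To get hold of $\rho$ I would use the integral representation of $F_2$ recalled above. For fixed generic $(x,y)$ the solutions are the integrals of
$$\omega=s^{b-1}t^{b'-1}(1-s)^{c-b-1}(1-t)^{c'-b'-1}(1-sx-ty)^{-a}\,ds\wedge dt$$
over the twisted cycles of the complement, in $\P^2$ with affine coordinates $(s,t)$, of the six-line arrangement
$$\mathcal A\colon\quad s=0,\quad t=0,\quad s=1,\quad t=1,\quad sx+ty=1,\quad \ell_\infty .$$
The rank-one local system defined by $\omega$ has local exponents around these six lines equal, in this order, to
$$b,\quad b',\quad c-b,\quad c'-b',\quad -a,\quad a-c-c',$$
the exponent $a-c-c'$ on $\ell_\infty$ being forced by the requirement that the exponents of a projective arrangement sum to $0$. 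As $(x,y)$ varies, $\rho$ is the Gauss--Manin monodromy of this family of twisted homologies, so a proper $\rho$-invariant subspace is the same as a sub-local-system of the twisted homology preserved as the moving line $sx+ty=1$ sweeps out.

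The key step is the non-resonance criterion for twisted cohomology of arrangements: the monodromy is irreducible (and the twisted cohomology concentrated in the middle degree, of the expected dimension four) provided that for every \emph{dense edge} of $\mathcal A$ the sum of the exponents of the lines through that edge is a non-integer. For generic $(x,y)$ every finite intersection of $\mathcal A$ is an ordinary double point, which is decomposable and hence not dense; the only dense edges are therefore the six lines together with the two triple points at infinity $[0:1:0]$ and $[1:0:0]$, where the three ``vertical'' lines $\{s=0\},\{s=1\},\ell_\infty$ and the three ``horizontal'' lines $\{t=0\},\{t=1\},\ell_\infty$ respectively concur. Summing exponents gives $b+(c-b)+(a-c-c')=a-c'$ at the first triple point and $b'+(c'-b')+(a-c-c')=a-c$ at the second. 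Thus the non-resonance conditions are exactly the non-integrality of
$$a,\quad b,\quad b',\quad c-b,\quad c'-b',\quad c-a,\quad c'-a,\quad c+c'-a,$$
and when all eight hold $\rho$, hence $E_2$, is irreducible; this is the ``only if'' direction. For the converse I would exhibit the invariant subspace case by case: integrality of a line exponent makes the corresponding factor of $\omega$ single-valued, so the twisted homology acquires a proper sub- or quotient-local-system, while integrality at a triple point produces a logarithmic subsheaf concentrated at infinity; when $c-a\in\Z$ or $c'-a\in\Z$ this recovers precisely the rank-three $E_1$-subsystem of Proposition~\ref{prop:s2}.

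I expect the main obstacle to be the ``only if'' direction, i.e. making the non-resonance theorem genuinely yield irreducibility of the $(x,y)$-monodromy $\rho$ rather than merely the correct dimension of the fibres. The delicate point is the bookkeeping of dense edges: a naive count using only the six line-exponents produces just the six conditions on $a,b,b',c-b,c'-b',c+c'-a$, and the two extra conditions $c-a$ and $c'-a$ are invisible unless one correctly treats the non-generic triple points at infinity, which is exactly where $\mathcal A$ fails to be in general position. Verifying that no further coincidences occur for generic $(x,y)$, and transporting irreducibility of the fibrewise twisted-cohomology local system to irreducibility of the Gauss--Manin connection, is the crux; an alternative route would be to realize $E_2$ as an $A$-hypergeometric (GKZ) system and invoke the GKZ non-resonance criterion, whose facet conditions reproduce the same eight quantities.
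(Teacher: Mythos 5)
First, note that the paper offers no proof of Fact \ref{redF2}: it is quoted from \cite{Bod}, so there is nothing internal to compare your argument against. That said, your reduction of the problem to the monodromy representation, and your bookkeeping of the dense edges of the projective closure of the arrangement $t_1(1-t_1)t_2(1-t_2)(1-t_1x-t_2y)=0$, are correct and match the dictionary the paper itself sets up in Section \ref{sec:monod}: the six lines carry the exponents $b$, $c-b$, $b'$, $c'-b'$, $-a$, $a-c-c'$ (i.e.\ $\mu_1,\dots,\mu_5,\mu_{12345}$), and the two triple points at infinity contribute $a-c'$ and $a-c$ (i.e.\ $\mu_{345}$ and $\mu_{125}$); cf.\ Figure \ref{P2fig}. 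So the eight quantities in the statement are accounted for correctly, and your observation that the two conditions $c-a,c'-a$ come precisely from the failure of general position at infinity is the right insight.

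However, the step you yourself flag as the crux is a genuine gap, not a routine verification. The non-resonance theorem for dense edges (Esnault--Schechtman--Viehweg, Schechtman--Terao--Varchenko, or the vanishing theorem of \cite{C} used in this paper) yields concentration of the \emph{fibrewise} twisted cohomology in middle degree and its dimension; it says nothing by itself about invariant subspaces of the $\pi_1(X)$-action on that four-dimensional space, which is what irreducibility of $E_2$ means. Supplying that implication is exactly the content of the theorem, and it needs a separate input: either Beukers' criterion that a GKZ system is irreducible if and only if its parameter vector is non-resonant --- which is the route actually taken in \cite{Bod}, and which you relegate to an ``alternative'' --- or an explicit computation of the monodromy group (e.g.\ checking that the circuit matrices $M_i^\mu$ of Corollary \ref{cor:monod-matrix} admit no common invariant subspace under the eight non-integrality conditions). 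The converse direction is also thinner than your sketch suggests: at resonant parameters the Euler integrals over twisted cycles need not span the solution space, so producing a sub-local-system of twisted homology does not automatically produce a subsystem of $E_2$; one must exhibit the invariant subspace on the differential-equation side, as the paper does for $a-c\in\Z$ in Proposition \ref{prop:s2} and Remark \ref{rem:kernel}, and analogous (but different) constructions are needed for the cases $b,b',c-b,c'-b',a,c+c'-a\in\Z$.
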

 
\begin{fact}\label{redF1}$(${\rm \cite{MS1}}$)$ 
Appell's system $E_1(a,b,b',c)$ is reducible if and only if at least one of
$$b+b'-c,\ b,\ b',\ c-a,\ a$$ is an integer.
\end{fact}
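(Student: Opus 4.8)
The plan is to reduce the assertion to the reducibility of the monodromy representation. Since $E_1(a,b,b',c)$ is a regular holonomic system, the Riemann--Hilbert correspondence shows that it is reducible as a system precisely when its monodromy representation $\rho$ on the three-dimensional solution space $S_1$ admits a proper invariant subspace, so the entire task is to decide when $\rho$ is reducible. To get a concrete grip on $\rho$ I would use the Euler integral representation: a solution is
\[
z(\gamma)=\int_\gamma t^{a-1}(1-t)^{c-a-1}(1-tx)^{-b}(1-ty)^{-b'}\,dt
\]
over a twisted cycle $\gamma$, and $S_1$ is identified with the twisted homology $H_1$ of $\P^1$ minus the five points $\{0,1,1/x,1/y,\infty\}$ with coefficients in the rank-one local system $\mathcal{L}$ attached to the multivalued integrand. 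The local monodromy eigenvalues of $\mathcal{L}$ at these five points are, in order, $e^{2\pi i a}$, $e^{2\pi i(c-a)}$, $e^{-2\pi i b}$, $e^{-2\pi i b'}$, and $e^{2\pi i(b+b'-c)}$ (the last computed from the condition that the product of all local monodromies be trivial). These eigenvalues equal $1$ exactly when $a$, $c-a$, $b$, $b'$, $b+b'-c$ is an integer, which are precisely the five quantities in the statement; and when none of them is trivial an Euler characteristic count gives $\dim H_1=3$, matching the rank.

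For the ``if'' direction I would show that triviality of a local monodromy forces reducibility. Suppose the eigenvalue at one of the five points $p$ equals $1$. Then $\mathcal{L}$ has no genuine branching at $p$, so it extends across $p$ and is pulled back from $\P^1$ minus the remaining four points; comparing the twisted homology of the five-punctured and four-punctured configurations through the natural long exact sequence produces a proper subspace of $H_1$. Because this comparison is carried out fibrewise in $(x,y)$, the resulting subspace is preserved by the Gauss--Manin (i.e.\ monodromy) action, so $\rho$ is reducible. Alternatively one can make the subsystem completely explicit from the integral, producing a lower-rank Appell or Gauss subsystem of $E_1$ in the spirit of the construction used for Proposition \ref{prop:s2}.

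For the ``only if'' direction, which I expect to be the main obstacle, I would assume that none of the five quantities is an integer, so that $\mathcal{L}$ is non-resonant at every point and $\dim H_1=3$, and I must prove that $\rho$ is irreducible. The plan is to write down explicit generators of $\rho$ as Picard--Lefschetz pseudo-reflections: loops around the six components $x=0$, $y=0$, $x=1$, $y=1$, $x=y$, and the line at infinity of the singular locus act by reflections whose reflection vectors and eigenvalues are governed by the twisted intersection form on the vanishing cycles. One then shows that, under the non-integrality hypotheses, these reflections have no common proper invariant subspace. The delicate part is the bookkeeping: every candidate invariant subspace must be forced to produce exactly one of the five integrality relations and no spurious extra condition, which requires analyzing the eigenstructure of the individual reflections together with products such as $M_0M_1$ and $M_xM_y$ and the nondegeneracy of the intersection form. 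Equivalently, I would invoke the non-resonance irreducibility criterion for Lauricella-type twisted cohomology going back to Deligne--Mostow, after checking that $E_1$ satisfies its hypotheses.
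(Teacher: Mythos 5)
First, a point of comparison: the paper does not prove this statement at all --- it is imported as a known Fact from Mimachi--Sasaki \cite{MS1} --- so there is no internal argument to measure yours against. Your framework is nevertheless the right one: identifying the solution space of $E_1$ with the twisted homology of $\P^1\setminus\{0,1,1/x,1/y,\infty\}$ for the rank-one local system attached to $t^{a-1}(1-t)^{c-a-1}(1-tx)^{-b}(1-ty)^{-b'}$, reading off the five local eigenvalues $e^{2\pi ia}$, $e^{2\pi i(c-a)}$, $e^{-2\pi ib}$, $e^{-2\pi ib'}$, $e^{2\pi i(b+b'-c)}$, and matching them with the five quantities in the statement is exactly how this criterion is organized in the literature. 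Your ``if'' direction is sound in spirit: triviality of one local monodromy lets the local system extend across that puncture and produces a monodromy-equivariant comparison with a rank-two problem; this is the same mechanism that underlies the paper's Proposition \ref{prop:s2} for $E_2$. One caveat: in the resonant case the natural map $\imath:H_1\to H_1^{\mathrm{lf}}$ need not be an isomorphism (the paper is careful about precisely this in Section 3), so the clean identification of the $3$-dimensional solution space with a single homology group must be replaced by an analysis of $\im(\imath)$ and $\ker(\imath)$, or by the explicit subsystem construction you mention as a fallback.

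The genuine gap is the ``only if'' direction, which you correctly flag as the main obstacle but do not actually carry out. Proving that the Picard--Lefschetz pseudo-reflections attached to the components of the singular locus admit no common proper invariant subspace when none of $a$, $c-a$, $b$, $b'$, $b+b'-c$ is an integer is the entire content of the cited theorem; asserting that ``every candidate invariant subspace must be forced to produce one of the five integrality relations'' restates the goal rather than establishing it, and no concrete mechanism is executed (for instance: using the nondegenerate twisted intersection form to show that any invariant subspace must either contain each reflection vector or lie in its orthogonal complement, and then deriving from the resulting dichotomies an integrality relation among the exponents). Likewise, the appeal to a ``non-resonance irreducibility criterion going back to Deligne--Mostow'' is, for this five-point Lauricella configuration, essentially a citation of the very statement to be proved --- no more informative than the paper's own citation of \cite{MS1}. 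As written, the proposal is a correct roadmap whose decisive half is missing.
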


\subsection{System $E_2(a,b,b',c,c')$ under $ a=c'$}

The system $E_2(a,b,b',c,c')$ is reducible when $ a=c'$, Fact \ref{redF2}.
In fact, we see that the system $E_1(b, a-b', b',c)$ is a subsystem
of $E_2(a,b,b',c,{a})$; precisely, we have
\begin{proposition} \label{prop:s2}
$$(1-y)^{-b'}S_2\left(a,b,b',c,a;x,-\frac{y}{1-y}\right)
\supset S_1(b,a-b',b',c;x,x(1-y)).$$
\end{proposition}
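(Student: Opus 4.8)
The plan is to prove the inclusion directly at the level of the Euler integral representations recorded above. I would specialize $c'=a$ and then reduce the rank-four double integral representing solutions of $E_2$ to the rank-three single integral representing solutions of $E_1$, by integrating out one of the two integration variables. The key is that, after the prescribed coordinate change and gauge, one of the two Pochhammer-type integrations in the $F_2$-integrand can be carried out explicitly and produces exactly the $F_1$-integrand.

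Concretely, I would start from the $E_2$ integrand (already with $c'=a$)
$$s^{b-1}t^{b'-1}(1-s)^{c-b-1}(1-t)^{a-b'-1}(1-sx-ty)^{-a}$$
and substitute $y\mapsto -y/(1-y)$ in the second slot. All the $y$-dependence sits in the last factor, and the elementary identity
$$1-sx+\frac{ty}{1-y}=\frac{(1-sx)(1-y)+ty}{1-y}$$
turns it into $(1-y)^{a}\big((1-sx)(1-y)+ty\big)^{-a}$; multiplying by the gauge $(1-y)^{-b'}$ leaves an overall prefactor $(1-y)^{a-b'}$. Writing $P=(1-sx)(1-y)$, the inner $t$-integral $\int_0^1 t^{b'-1}(1-t)^{a-b'-1}\big(P+ty\big)^{-a}\,dt$ becomes, after pulling out $P^{-a}$, a Gauss--Euler integral for $F(a,b',a;-y/P)$; since its first and third parameters coincide it collapses by $F(a,b',a;z)=(1-z)^{-b'}$ to $B(b',a-b')\,P^{-(a-b')}(P+y)^{-b'}$. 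Here $P^{-(a-b')}=(1-sx)^{-(a-b')}(1-y)^{-(a-b')}$ and $P+y=1-sx(1-y)$, so the powers of $(1-y)$ cancel against the $(1-y)^{a-b'}$ collected above, and what survives is
$$B(b',a-b')\int s^{b-1}(1-s)^{c-b-1}(1-sx)^{-(a-b')}\big(1-sx(1-y)\big)^{-b'}\,ds,$$
precisely the Euler integrand for $F_1\big(b,a-b',b',c;x,x(1-y)\big)$. Thus the transformation carries $F_1$ on the right to a constant multiple of $F_2(a,b,b',c,a;\cdot)$ after the stated gauge and coordinate change.

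Finally I would upgrade this from the single series $F_1$ to the whole solution space $S_1$. The interval $[0,1]$ in $t$ is a regularized twisted $1$-cycle for the $t$-part of the integrand, so for any $s$-cycle $\gamma$ the product $\gamma\times[0,1]$ is a twisted $2$-cycle, and $\int_{\gamma\times[0,1]}$ solves $E_2(a,b,b',c,a)$; by Fubini together with the pointwise identity above, each such transformed $E_2$-solution equals the $E_1$-solution obtained from the reduced integrand over $\gamma$. Letting $\gamma$ range over a basis of the rank-three twisted homology attached to $F_1$ then realizes all of $S_1(b,a-b',b',c;x,x(1-y))$ inside $(1-y)^{-b'}S_2(a,b,b',c,a;x,-y/(1-y))$, which is the asserted inclusion. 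I expect the main obstacle to be the homological bookkeeping of this last step, namely checking that the product cycles are genuinely closed (so the integrals really satisfy $E_2$) and that they span the full three-dimensional $S_1$, the convergence conditions $\re b'>0$ and $\re(a-b')>0$ being harmless since the general parameter case follows by analytic continuation. A computation-heavier alternative that sidesteps the homology is to transform the operators $P_2,Q_2$ directly under the gauge and coordinate change and verify that they lie in the left ideal generated by $P_1,Q_1,R_1$.
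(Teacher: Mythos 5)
Your argument is correct and is essentially the paper's own integral-representation proof (Subsection \ref{subsec:integ}): the paper arrives at the identical reduced integrand $s^{b-1}(1-s)^{c-b-1}(1-sx)^{b'-a}\bigl(1-sx(1-y)\bigr)^{-b'}$ by a birational change $t\mapsto\tau$ that factors the double integral into a Beta integral times a single integral, which is just another packaging of your direct evaluation of the inner $t$-integral via $F(a,b',a;z)=(1-z)^{-b'}$. The caveat you flag at the end is also the paper's: this route is only a ``proof'' under non-integrality conditions on the parameters, and the paper covers the general case by the differential-operator argument that you sketch as an alternative in your final sentence.
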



We give three ``proof''s: one using power series,
Subsection \ref{subsec:power}, 
one using integral representations, Subsection \ref{subsec:integ},
and one manipulating differential equations, Subsection \ref{subsec:equat}. 
The former two are valid only under some non-integral conditions 
on parameters, which we do not give explicitly. 
Though the last one is valid for any parameters, 
it would be not easy to get a geometric meaning.

\subsubsection{Power series} \label{subsec:power}

The following fact explains the inclusion in Proposition \ref{prop:s2}.
\begin{fact} {\rm \cite[p. 80]{Bailey}. } \label{Bailey1}
$$(1-y)^{-b'}F_2\left(a,b,b',c,a;x,-\frac{y}{1-y}\right)=F_1(b,a-b',b',c;x,x(1-y)).$$
\end{fact}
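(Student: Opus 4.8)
The plan is to prove the identity by reducing each side to a single Euler-type integral and matching them. I would start from the double-integral representation of $F_2$ displayed above, specialized to $c'=a$, so that the $t$-weight is $(1-t)^{a-b'-1}$. Performing the substitution $y\mapsto -y/(1-y)$ inside this integral, the factor $(1-sx-ty)^{-a}$ becomes
\[
\Bigl(1-sx+\tfrac{ty}{1-y}\Bigr)^{-a}=(1-y)^{a}\bigl((1-sx)(1-y)+ty\bigr)^{-a},
\]
so that, after multiplying by $(1-y)^{-b'}$, the left-hand side equals $(1-y)^{a-b'}$ times
\[
\frac{\Gamma(c)\Gamma(a)}{\Gamma(b)\Gamma(b')\Gamma(c-b)\Gamma(a-b')}\int_0^1\!\!\int_0^1 s^{b-1}(1-s)^{c-b-1}t^{b'-1}(1-t)^{a-b'-1}\bigl((1-sx)(1-y)+ty\bigr)^{-a}\,ds\,dt.
\]

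Next I would carry out the inner $t$-integration. Factoring $(1-sx)(1-y)+ty=(1-sx)(1-y)(1-tz)$ with $z=-y/[(1-sx)(1-y)]$, the $t$-integral is a Beta/Euler integral whose value is the degenerate Gauss case $F(b',a,a;z)=(1-z)^{-b'}$; explicitly
\[
\int_0^1 t^{b'-1}(1-t)^{a-b'-1}(1-tz)^{-a}\,dt=B(b',a-b')(1-z)^{-b'},\qquad B(p,q)=\frac{\Gamma(p)\Gamma(q)}{\Gamma(p+q)}.
\]
Using $1-z=\dfrac{1-sx(1-y)}{(1-sx)(1-y)}$ and collecting the resulting powers of $(1-sx)$ and $(1-y)$, I expect the prefactor $(1-y)^{a-b'}$ to cancel entirely, leaving the single integral
\[
\frac{\Gamma(c)}{\Gamma(b)\Gamma(c-b)}\int_0^1 s^{b-1}(1-s)^{c-b-1}(1-sx)^{-(a-b')}\bigl(1-sx(1-y)\bigr)^{-b'}\,ds.
\]

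Finally I would recognize this as precisely the Euler integral for $F_1$ given above, with parameters $(a,b,b',c)$ replaced by $(b,\,a-b',\,b',\,c)$ and variables $(x,y)$ replaced by $(x,\,x(1-y))$. The normalizing constant $\Gamma(c)/[\Gamma(b)\Gamma(c-b)]$ is exactly the reciprocal of the Beta factor $B(b,c-b)$ appearing in that representation, so the whole expression collapses to $F_1(b,a-b',b',c;x,x(1-y))$, which is the assertion.

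The main obstacle is purely bookkeeping: tracking the four $\Gamma$-constants and the powers of $(1-y)$ so that the clean cancellation of $(1-y)^{a-b'}$ and of the Beta prefactors actually occurs. One should also note the domain of validity, since the integral manipulations require $\re b,\ \re(c-b),\ \re b',\ \re(a-b')>0$ and $|x|,|y|$ small; the identity is thus first established on an open set and then propagated to generic parameters by analytic continuation. Alternatively, and in the spirit of the present subsection, the same identity can be checked formally by expanding $(1-y)^{-b'}$, the series $F_2$, and the factor $(1-y)^{n}$ arising in $F_1$, and then comparing the coefficient of $x^py^q$ term by term; the integral route above is merely the cleaner way to organize that comparison.
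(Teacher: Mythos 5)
Your argument is correct, and it is essentially the same computation the paper performs in Subsection \ref{subsec:integ}: start from the Euler double integral for $F_2$ with $c'=a$ and integrate out the variable $t$, reducing to a single Euler integral that is recognized as the one for $F_1(b,a-b',b',c;x,x(1-y))$. The only real difference is in packaging. The paper does not prove the Fact itself (it is cited from Bailey); instead it makes the M\"obius change of variable $\tau=Nt/(1-sx-yt)$, under which the double integral visibly factors, for $a=c'$, into a Beta integral in $\tau$ times a single integral $J$ in $s$ --- an argument that applies to arbitrary twisted cycles and therefore yields the inclusion of solution spaces in Proposition \ref{prop:s2} without pinning down normalizations. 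You instead evaluate the inner $t$-integral over $[0,1]$ directly via the degenerate Gauss case $F(b',a;a;z)=(1-z)^{-b'}$ and track the Gamma constants, which is exactly what is needed to obtain Bailey's identity with the precise prefactors; the cancellation of $(1-y)^{a-b'}$ and of the Beta constants works out as you claim (I checked: the $t$-integral contributes $(1-sx)^{b'-a}(1-y)^{b'-a}B(b',a-b')$ times $(1-sx(1-y))^{-b'}$, and the constants collapse to $\Gamma(c)/(\Gamma(b)\Gamma(c-b))$). Your closing remarks about the domain of validity and analytic continuation are appropriate; the paper likewise flags that the integral and power-series ``proofs'' require non-integrality conditions it does not spell out.
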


\subsubsection{Integral representation} \label{subsec:integ}

We consider the integral
$$I=\int\! \int s^{b-1}t^{b'-1}(1-s)^{c-b-1}(1-t)^{c'-b'-1}(1-sx-ty)^{-a}dsdt, $$
which is a solution of the system $E_2(a,b,b',c,c';x,y)$.
We change the coordinate $t$ into $\tau$ as
$$\tau=\frac{Nt}{1-sx-yt}, \quad N=1-y-sx,$$
which sends 
$$t=0,\quad 1,\quad \frac{1-sx}y\qquad {\rm to}\qquad 
\tau=0,\quad 1,\quad \infty.$$ 
The inverse map is
$$t=\frac{1-sx}{D}\tau,\qquad D=y\tau+N.$$
Since
$$1-t=\frac{(1-\tau)N}D,\quad 1-sx-ty=\frac{(1-sx)N}D,\quad dt=\frac{(1-sx)N}{D^2}d\tau+*ds,$$
we have
$$I=\iint s^{b-1}(1-s)^{c-b-1}(1-sx)^{b'-a}N^{c'-b'-a}\cdot \tau^{b'-1}(1-\tau)^{c'-b'-1}\cdot D^{a-c'}dsd\tau.$$
This implies, if $a=c'$, then the double integral above becomes 
the product of the Beta integral
$$\int \tau^{b'-1}(1-\tau)^{c'-b'-1}d\tau$$
and the integral
\[
\begin{array}{ll}
J&=\dsp\int s^{b-1}(1-s)^{c-b-1}(1-sx)^{b'-a}N^{c'-b'-a}ds\\
&=(1-y)^{-b'}\dsp\int s^{b-1}(1-s)^{c-b-1}(1-sx)^{b'-a}
\left(1-\dsp\frac{x}{1-y}s\right)^{-b'}ds,\end{array}
\]
which is an element of the space
$(1-y)^{-b'}S_1(b,a-b',b',c;x,x/(1-y)$.
This shows 
$$S_2(a,b,b',c,a;x,y)\supset (1-y)^{-b'}
S_1\left(b,a-b',b',c;x,\frac{x}{1-y}\right),$$
which is equivalent to
$$(1-y)^{-b'}S_2\left(a,b,b',c,a;x,-\frac{y}{1-y}\right)
  \supset S_1(b,a-b',b',c;x,x(1-y)).$$

\par\medskip
The bi-rational coordinate change $(s,t)\rightarrow(s,\tau)$ is so made that the lines defining the integrand of the integral $\iint\cdots dsdt$ may become the union of vertical lines and horizontal lines in the $(s,\tau)$-space. Actual blow-up and down process is as follows (see Figure \ref{st}).  
Name the six lines in the $st$-projective plane as:
$$\ell_1: s=0,\quad\ell_2: t=0,\quad\ell_3: s=1,\quad\ell_4: t=1,
\quad\ell_5: 1-sx-ty=0,\quad\ell_6: \infty.$$
Blow up at the 4 points (shown by circles)
$$\ell_2\cap\ell_5,\quad \ell_4\cap\ell_5,\quad \ell_1\cap\ell_3\cap\ell_6=0:1:0,\quad \ell_2\cap\ell_4\cap\ell_6=1:0:0,$$
and blow-down along the proper transforms of the line $\ell_6$ 
and two lines:
$$25':\ 1-sx\quad {\rm and}\quad 45':\ N=1-y-sx;$$
these three lines are dotted.
This takes the $st$-projective plane to $\P^1(s)\times\P^1(\tau)$ .
In the figure, lines labeled $1,2,\dots$ stand for $\ell_1,\ell_2,\dots$, and the lines labeled $25,45,246$ on the right are the blow-ups of the intersection points $\ell_2\cap\ell_5,\ell_4\cap\ell_5,\ell_2\cap\ell_4\cap\ell_6$, respectively.  
The line obtained by blowing up the point $\ell_1\cap\ell_3\cap\ell_6=0:1:0$ 
is the line defined by $D=y\tau+1-y-sx$, which should be labeled by $136$.
\begin{figure}[htb] \begin{center}
\includegraphics[width=13cm]{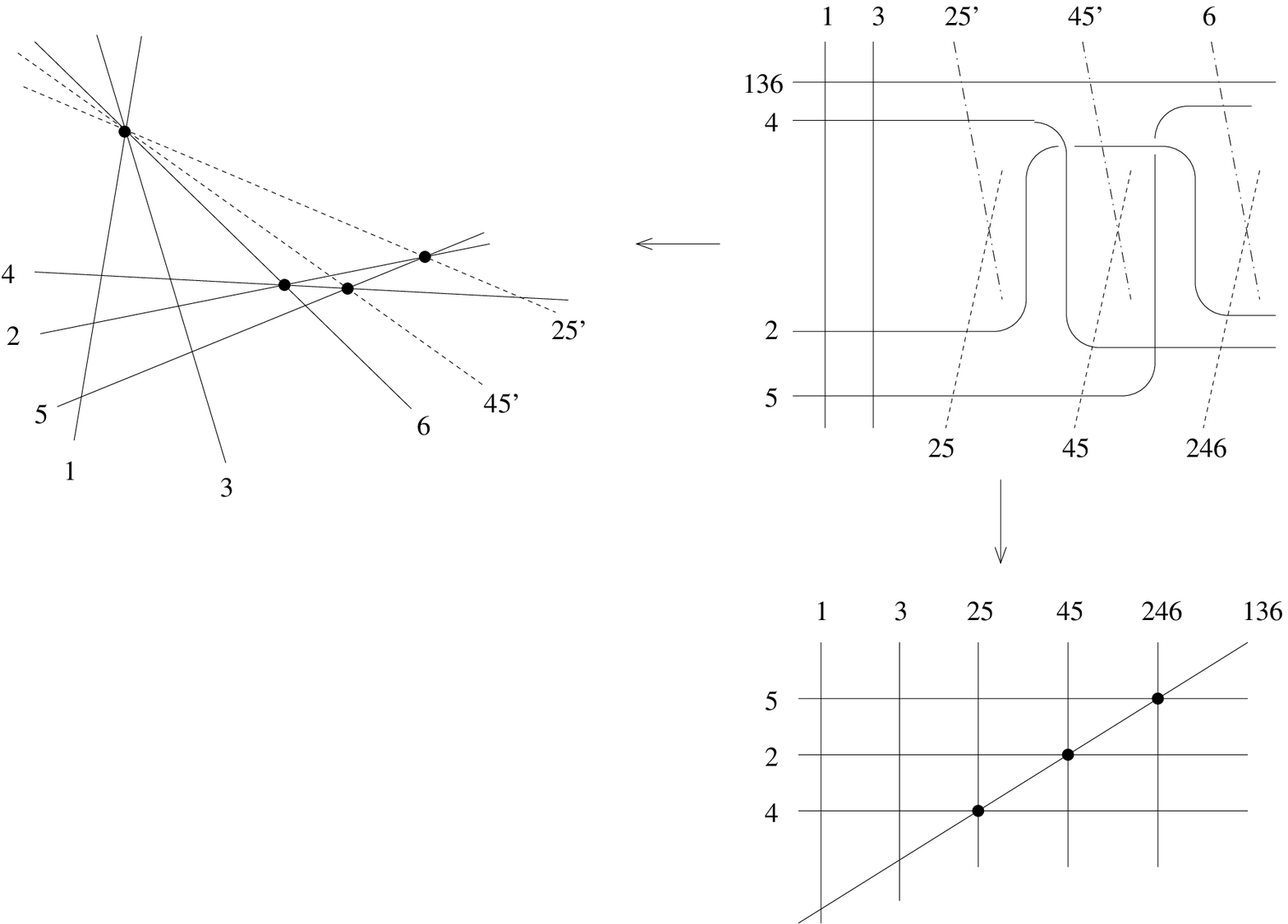}
\end{center} 
\caption{Birational map $(s,t)\to(s,\tau)$} 
\label{st} 
\end{figure}  

\subsubsection{System of differential equations} \label{subsec:equat}

A proof of the inclusion in Proposition \ref{prop:s2} that is 
valid for any parameters is done as follows.
Let $z$ be a solution of the system $E_1(a,b,b',c;x,y)$. Then, 
the system $E_1:P_1z=Q_1z=R_1z=0$ yields $\C(x,y)$-linear expressions 
of $z_{xx}, z_{xy}$ and $z_{yy}$ in terms of $z_x,z_y$ and $z$. 
Substitute these expressions into the system $E_2:P_2z=Q_2z=0$.
Then, we get two linear forms in $z_x,z_y$ and $z$. 
We now have only to see their coefficients vanish for the given parameters
after a change of coordinates and a change of the unknown by multiplying a simple factor.
We do not here present the actual computation, because if we put $x^3=0$ in the  proof of Proposition \ref{X3FD} in Subsection \ref{secondproof}, {manipulating differential equations}, it gives essentially a proof of Proposition \ref{prop:s2}.

\subsection{System $E_2(a,b,b',c,c')$ under $ a=c=c'$}
When $a=c=c'$, applying Proposition 1.3 also for $a=c$, we see that the system $E_2(a,b,b',a,a)$ has two subsystems isomorphic to $E_1$. The intersection of the two $E_1$'s would be the Gauss hypergeometric equation.
In fact, we have the following proposition.
\begin{proposition} \label{prop:s2g}
$$S_2(a,b,b',a,a;x,y)
\supset(1-x)^{-b}(1-y)^{-b'}S\left(b,b',a;\frac{xy}{(1-x)(1-y)}\right).$$
\end{proposition}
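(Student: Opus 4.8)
The plan is to prove Proposition \ref{prop:s2g} by the integral-representation method, reusing the computation behind Proposition \ref{prop:s2} and then performing one further reduction down to the Gauss equation. I would start from the Euler-type double integral
$$I=\iint s^{b-1}t^{b'-1}(1-s)^{a-b-1}(1-t)^{a-b'-1}(1-sx-ty)^{-a}\,ds\,dt,$$
which is the integral of Subsection \ref{subsec:integ} specialized to $c=c'=a$ and hence solves $E_2(a,b,b',a,a;x,y)$. Since the fifth parameter equals the first, I apply verbatim the substitution $t\mapsto\tau$ of Subsection \ref{subsec:integ}; because $a=c'$ the factor $D^{a-c'}$ becomes $1$, so $I$ splits as the product of the Beta integral $\int\tau^{b'-1}(1-\tau)^{a-b'-1}\,d\tau$ and
$$J=\int_0^1 s^{b-1}(1-s)^{a-b-1}(1-sx)^{b'-a}(1-y-sx)^{-b'}\,ds,$$
an element of the transformed solution space of $E_1(b,a-b',b',a)$ furnished by Proposition \ref{prop:s2}.

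The key new observation is that this $E_1$ is itself reducible: writing its parameters as $E_1(A,B,B',C)=E_1(b,a-b',b',a)$, one has $B+B'-C=(a-b')+b'-a=0$, so by the first condition of Fact \ref{redF1} it contains a Gauss subsystem. I would realize this reduction at the level of $J$ by a Möbius change of variable in $s$ sending the four branch points $s=0,1,1/x,(1-y)/x$ of the integrand to $u=0,1,\infty,1/w$. Matching the exponents $(b-1,a-b-1,b'-a,-b')$ of $J$ with those of a Gauss integrand forces the map to fix $0$ and $1$ and to send $1/x$ to $\infty$, which pins it down uniquely as
$$u=\frac{(1-x)s}{1-xs},\qquad s=\frac{u}{1-x(1-u)}.$$
A short check gives $1-sx=\tfrac{1-x}{\Delta}$, $1-s=\tfrac{(1-x)(1-u)}{\Delta}$ and $1-y-sx=\tfrac{(1-x)(1-y)(1-wu)}{\Delta}$, where $\Delta=1-x(1-u)$ and $w=\dfrac{xy}{(1-x)(1-y)}$, so that the fourth branch point $(1-y)/x$ does map to $1/w$.

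Substituting and collecting powers is then routine bookkeeping. The powers of $\Delta$ contributed by $s^{b-1}$, $(1-s)^{a-b-1}$, $(1-sx)^{b'-a}$, $(1-y-sx)^{-b'}$ and the Jacobian $ds=\tfrac{1-x}{\Delta^{2}}\,du$ have exponents summing to $0$ and cancel exactly, the $(1-x)$- and $(1-y)$-powers assemble into the prefactor $(1-x)^{-b}(1-y)^{-b'}$, and what survives is
$$J=(1-x)^{-b}(1-y)^{-b'}\int_0^1 u^{b-1}(1-u)^{a-b-1}(1-wu)^{-b'}\,du,$$
the Euler integral of $F(b,b',a;w)$. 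Letting the $s$-cycle vary so as to sweep out all of $S(b,b',a;w)$ (as in the proof of Proposition \ref{prop:s2}, this last step requires the usual non-integrality of the parameters) yields the asserted inclusion.

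I expect the main obstacle to be nothing deep but rather getting the Möbius substitution exactly right: the cancellation of all the $\Delta$-powers — which is the genuine content of the relation $C=B+B'$ — happens only for this specific $u=(1-x)s/(1-xs)$, and any other normalization leaves a residual factor that is not of Gauss type. As an independent consistency check, one notes that the same Gauss system must also arise by applying Proposition \ref{prop:s2} first in the variable $x$ (using $a=c$), so that $S(b,b',a;w)$ is exactly the intersection of the two $E_1$ subsystems, in agreement with the heuristic stated just before the proposition.
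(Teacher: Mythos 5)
Your proposal is correct and follows essentially the same route as the paper's integral-representation proof: the paper likewise continues from the integral $J$ of Subsection \ref{subsec:integ} and applies exactly your M\"obius substitution $\sigma=(1-x)s/(1-xs)$ (with $M=x\sigma+1-x$ playing the role of your $\Delta$), the condition $a=c$ killing the residual factor $M^{a-c}$ and leaving the Euler integral of $F\bigl(b,b',a;\tfrac{xy}{(1-x)(1-y)}\bigr)$. Your exponent bookkeeping and the identification of the fourth branch point with $1/w$ check out, so there is nothing to add.
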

Similar to the argument of the previous Subsection,
we can give three ``proof''s: one using power series,
one using integral representations, 
and one manipulating differential equations.
We give a sketch of them in the following.

\subsubsection{Power series}

The following identity explains the inclusion above.
\begin{fact} {\rm \cite[p. 81]{Bailey}. } \label{Bailey2}
$$F_2(a,b,b',a,a;x,y)=(1-x)^{-b}(1-y)^{-b'}
F\left(b,b',a;\frac{xy}{(1-x)(1-y)}\right).$$
\end{fact}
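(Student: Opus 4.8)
The plan is to prove Fact~\ref{Bailey2} by comparing Taylor coefficients. Both sides are analytic near $(x,y)=(0,0)$, so it suffices to show that the coefficient of $x^iy^j$ agrees for every $i,j\ge 0$; matching all coefficients on a common neighborhood of the origin then yields the functional identity. The left-hand coefficients are read directly from the defining series of $F_2(a,b,b',a,a;x,y)$, namely $\dfrac{(a,i+j)(b,i)(b',j)}{(a,i)(a,j)\,i!\,j!}$. All the work is on the right-hand side: after expanding it into a power series in $x,y$, the per-coefficient identity will collapse, via elementary Pochhammer manipulations, to a single terminating Gauss summation evaluated by Chu--Vandermonde.

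First I would substitute $w=\frac{xy}{(1-x)(1-y)}$ into the Gauss series $F(b,b',a;w)=\sum_{n\ge0}\frac{(b,n)(b',n)}{(a,n)\,n!}w^n$ and absorb the prefactors, writing the right-hand side as
\[
\sum_{n\ge0}\frac{(b,n)(b',n)}{(a,n)\,n!}\,x^ny^n(1-x)^{-b-n}(1-y)^{-b'-n}.
\]
Then I would binomially expand the two negative powers, $(1-x)^{-b-n}=\sum_{k\ge0}\frac{(b+n,k)}{k!}x^k$ and similarly in $y$, and collect the coefficient of $x^iy^j$ by setting $k=i-n$, $l=j-n$ with $0\le n\le\min(i,j)$. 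The key simplification is the Pochhammer telescoping $(b,n)(b+n,i-n)=(b,i)$ and $(b',n)(b'+n,j-n)=(b',j)$, which factors $(b,i)(b',j)$ out of the entire coefficient, leaving
\[
(b,i)(b',j)\sum_{n=0}^{\min(i,j)}\frac{1}{(a,n)\,n!\,(i-n)!\,(j-n)!}.
\]

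Next I would equate this with the $F_2$-coefficient, cancel the common factor $(b,i)(b',j)$, and clear $i!\,j!$, reducing the whole claim to the parameter-free identity
\[
\frac{(a,i+j)}{(a,i)(a,j)}=\sum_{n=0}^{\min(i,j)}\binom{i}{n}\binom{j}{n}\frac{n!}{(a,n)}.
\]
Rewriting the binomials through $(-i,n)=(-1)^n n!\binom{i}{n}$ and $(-j,n)=(-1)^n n!\binom{j}{n}$ turns the right side into the terminating Gauss value $F(-i,-j,a;1)=\sum_{n\ge0}\frac{(-i,n)(-j,n)}{(a,n)\,n!}$ (the sum truncates at $\min(i,j)$ automatically, since $(-i,n)=0$ for $n>i$). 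The Chu--Vandermonde summation then gives $F(-i,-j,a;1)=\frac{(a+j,i)}{(a,i)}=\frac{(a,i+j)}{(a,i)(a,j)}$, which is exactly the left side, completing the coefficient identity.

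The combinatorial heart, Chu--Vandermonde, is classical, so the genuine obstacle is organizational rather than conceptual: one must arrange the double-then-triple summation so that the inner $n$-sum appears cleanly isolated, and keep careful track of the Pochhammer telescopings and index shifts. The only true caveat is parameter-dependence: the denominators $(a,n),(a,i),(a,j)$ require $a\notin\{0,-1,-2,\dots\}$, and for the binomial expansions to be legitimate one works near the origin and under the non-integral conditions on the parameters that the paper already flags for the power-series proofs; the excluded values of $a$ are then recovered by continuity, both sides being meromorphic in $a$ with matching poles. A final routine point is to verify that the radii of convergence overlap on a neighborhood of $(0,0)$, so that equality of all Taylor coefficients indeed forces equality of the two functions.
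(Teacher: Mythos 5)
Your proposal is correct, and it is a genuinely different route from the paper's: the paper gives no proof of this Fact at all --- it is quoted verbatim from Bailey \cite{Bailey} --- and the paper's own support for the equivalent statement (the inclusion in Proposition \ref{prop:s2g}) comes instead from a change of variable $s\mapsto\sigma=(1-x)s/(1-xs)$ in the Euler integral $J$, which for $a=c$ splits off a Beta factor and leaves a Gauss-type integral in the variable $xy/((1-x)(1-y))$, and from an elimination argument at the level of the differential operators. I checked your computation: the expansion of the right-hand side, the telescoping $(b,n)(b+n,i-n)=(b,i)$, the reduction of the coefficient identity to $\sum_{n}\binom{i}{n}\binom{j}{n}\,n!/(a,n)=F(-i,-j,a;1)$, and the Chu--Vandermonde evaluation $F(-i,-j,a;1)=(a+j,i)/(a,i)=(a,i+j)/\bigl((a,i)(a,j)\bigr)$ are all correct. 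What your route buys is a complete, self-contained proof of the exact identity --- including the constant $1$ and the branch normalization near the origin --- valid whenever $a\notin\mathbb{Z}_{\le 0}$ with no further non-integrality conditions, whereas the paper's integral argument identifies solution spaces only up to nonzero constants and under unstated genericity assumptions, and its operator argument, while valid for all parameters, does not by itself single out which solution corresponds to $F_2$. Two small corrections: your closing appeal to meromorphic continuation in $a$ is vacuous, since for $a\in\mathbb{Z}_{\le 0}$ both sides are undefined (the factor $(a,n)$ sits in denominators on both sides, so no excluded values need recovering); and rather than tracking radii of convergence, it is cleaner to note that after justifying the triple-sum rearrangement by absolute convergence for small $|x|,|y|$, the whole statement is an identity of formal power series in $x,y$ with coefficients rational in $a,b,b'$, so the per-coefficient Chu--Vandermonde identity settles it outright.
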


\subsubsection{Integral representation}
We continue the argument in \S \ref{subsec:integ}.
In the integral $J$ in \S \ref{subsec:integ} above, 
change the coordinate from $s$ to $\sigma$ as
$$s=\frac{\sigma}M,\quad \sigma=\frac{(1-x)s}{1-xs},\qquad M=x\sigma+1-x$$
sending 
$$s=0,\ 1,\ \frac1x\quad{\rm to}\quad \sigma=0,\ 1,\ \infty.$$
Since 
$$1-s=\frac{(1-x)(1-\sigma)}M,\quad 1-sx=\frac{1-x}M,\quad 1-\frac{xs}{1-y}=\frac{(1-x)(1-y)-xy\sigma}{(1-y)M},$$
we have
$$J=(1-x)^{c-b-a}(1-y)^{-b'}\int \sigma^{b-1}(1-\sigma)^{c-b-1}\left(1-\frac{xy\sigma}{(1-x)(1-y)}\right)^{-b'}M^{a-c}d\sigma.$$
This implies, if $a=c$, then 
$$J=(1-x)^{-b}(1-y)^{-b'}\int \sigma^{b-1}(1-\sigma)^{a-b-1}\left(1-\frac{xy\sigma}{(1-x)(1-y)}\right)^{-b'}d\sigma.$$
This shows 
$$(1-y)^{-b'}S_1\left(b,a-b',b',a;x,\frac{x}{1-y}\right)\supset
(1-x)^{-b}(1-y)^{-b'}S\left(b,b',a;\frac{xy}{(1-x)(1-y)}\right),$$
which of course implies the inclusion relation in Proposition 
\ref{prop:s2g} by combination with that in Proposition \ref{prop:s2}.

\subsubsection{System of differential equations}
Put
$$z=(1-x)^{-b}(1-y)^{-b'}u(t),\quad t=xy/(1-x)/(1-y).$$
We have
$$z_x=(1-x)^{-b}(1-y)^{-b'}u't_x+\cdots,\quad z_{xx}=(1-x)^{-b}(1-y)^{-b'}u''(t_x)^2+\cdots,$$
and so on. 
Assume that $u(t)\in S(b,b',a;t)$. The equation $P(b,b',a;t)u(t)=0$ gives a linear expression of $u''$ in $u'$ and $u$. Substitute these expressions in  
$$P_2(a,b,b',a,a;x,y)z=x(1-x)z_{xx}-xyz_{xy}+\cdots$$
and we get the product of $(1-x)^{-b}(1-y)^{-b'}$ 
and a $\C(x,y)$-linear combination of $u$ and $u'$. 
The coefficients vanish if $a=c$. 
If we do the same for $Q_2(a,b,b',a,a;x,y)z$, then we find that it vanishes when $a=c'$. 

\section{Solutions expressed as indefinite integrals}\label{indefinite0}

We show that some indefinite integrals solve the system $E_2(a,b,b',a,a)$.
We begin with some well-known facts.

\begin{lemma}\label{wellknown1}
$$P(a,b,c;x)\Phi=b x\frac{\partial}{\partial s}
\left(\frac{s(1-s)}{x-s}\Phi\right),$$
where 
\[
P(a,b,c;x)=D(D+c-1)-x(D+a)(D+b),
\quad\Phi=s^{b-c}(1-s)^{c-a-1}(x-s)^{-b}.
\]
\end{lemma}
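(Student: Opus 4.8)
The plan is to verify the identity by a direct, if somewhat lengthy, computation, the key point being that $\Phi$ depends on $x$ \emph{only} through the single factor $(x-s)^{-b}$. Writing $\Phi=g(s)\,(x-s)^{-b}$ with $g(s)=s^{b-c}(1-s)^{c-a-1}$ and $w:=x-s$, one has $\partial_x\Phi=-b\,(x-s)^{-1}\Phi$, and hence $D\Phi=x\,\partial_x\Phi=-bx\,(x-s)^{-1}\Phi$. Thus every application of $D$ reduces to differentiating powers of $w$, which keeps the left-hand side completely explicit.

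Concretely, I would first record the two factorizations
$$(D+b)\Phi=-\frac{bs}{x-s}\,\Phi,\qquad (D+c-1)\Phi=\big[(c-1)-bx(x-s)^{-1}\big]\Phi,$$
then apply $D+a$ to the former and $D$ to the latter, and expand both $x(D+a)(D+b)\Phi$ and $D(D+c-1)\Phi$ as $g(s)$ times an explicit combination of $w^{-b-1}$ and $w^{-b-2}$. After collecting, the left-hand side acquires the shape
$$P(a,b,c;x)\Phi=b\,g(s)\,x\,w^{-b-2}\,\Pi(x,s),$$
where $\Pi$ is quadratic in $s$, with $x$ entering as a parameter.

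For the right-hand side I would set $h(s)=s(1-s)g(s)=s^{b-c+1}(1-s)^{c-a}$, so that $\dfrac{s(1-s)}{x-s}\Phi=h(s)\,w^{-b-1}$. Differentiating in $s$ and using $\partial_s w=-1$ gives $\partial_s\big(h(s)w^{-b-1}\big)=h'(s)\,w^{-b-1}+(b+1)h(s)\,w^{-b-2}$; factoring $g(s)$ out of $h'(s)$ then puts this, too, in the form $g(s)\,w^{-b-2}$ times a quadratic in $s$. Multiplying by $bx$ brings the right-hand side into exactly the same shape as the left.

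The only genuine content is the final comparison: the asserted identity is equivalent to a single polynomial identity between the two quadratics in $s$, which I expect to match coefficient by coefficient in the four monomials $x,\ s,\ sx,\ s^2$. There is no conceptual obstacle here; the one place demanding care is this bookkeeping — in particular tracking how the exponents $b-c$ and $c-a-1$ recombine after the $s$-differentiation — where a stray sign or index error would otherwise spoil the match, so that is the step I would carry out most deliberately.
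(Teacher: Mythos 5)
Your computation is correct --- I checked it: with $w=x-s$ and $g(s)=s^{b-c}(1-s)^{c-a-1}$ one gets $D(D+c-1)\Phi=g(s)\bigl[-bcx\,w^{-b-1}+b(b+1)x^2w^{-b-2}\bigr]$ and $-x(D+a)(D+b)\Phi=g(s)\bigl[absx\,w^{-b-1}-b(b+1)sx^2w^{-b-2}\bigr]$, and after substituting $x=w+s$ in the $w^{-b-2}$ terms both sides become $bx\,g(s)w^{-b-2}$ times the same polynomial, matching in the monomials $x,\ s,\ sx,\ s^2$ (the constant term, which you omitted from your list, vanishes on both sides). This is, however, a genuinely different route from the paper's. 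The paper never expands in powers of $x-s$: it starts from the operator identity $\bigl(D+D_s+\tfrac{c-a-1}{1-s}\bigr)\Phi=(-a-1)\Phi$ (with $D_s=s\,\partial_s$), which converts $(D+a)\Phi$ and $(D+c-1)\Phi$ into $s$-differential operators applied to $\Phi$; since these commute with $D$, and since $D\Phi=\tfrac{-bx}{x-s}\Phi$ and $(D+b)\Phi=\tfrac{-bs}{x-s}\Phi$, the two halves of $P\Phi$ combine so that the $\tfrac{c-a-1}{1-s}$ terms cancel and what remains is $bx(D_s+1)\tfrac{1-s}{x-s}\Phi$, which is an exact $s$-derivative by the operator identity $D_s+1=\partial_s\circ s$ --- no coefficient comparison is needed. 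Your approach buys freedom from any cleverness at the price of heavier bookkeeping (the recombination $h'(s)=g(s)\bigl[(b-c+1)(1-s)-(c-a)s\bigr]$ that you rightly flag as the delicate step); the paper's buys brevity and a structural explanation of \emph{why} $P\Phi$ is a total $s$-derivative, which is the feature actually exploited in Lemma \ref{wellknown2}.
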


\begin{proof} Note that
$$\left(D+D_s+\frac{c\!-\!a\!-\!1}{1-s}\right)
  \Phi=(-a-1)\Phi, \quad D=x\frac{\partial}{\partial x},\quad D_s=s\frac{\partial}{\partial s}.$$
This implies
\begin{eqnarray*}
&(D +a)\Phi=-\dsp \left(D_s+\frac{c -a-1}{1-s}+1\right)\Phi,& \\
& (D +c-1)\Phi=-\dsp \left(D_s+\frac{(c-a-1)s}{1-s}+1\right)\Phi.&
\end{eqnarray*}
Since
$$D \Phi=\frac{-b x}{x-s}\Phi,\quad 
(D+b)\Phi=-\frac{b s}{x-s}\Phi,\quad D_s+1=\frac{\partial}{\partial s}s,$$
we have 
\[
\begin{array}{ll}
P\Phi&=\dsp\left\{\left(D_s+\frac{(c\!-\!a\!-\!1)s}{1-s}+1\right)
    \frac{b x}{x-s}
    -z\left(D_s+\frac{c\!-\!a\!-\!1}{1-s}+1\right)
   \frac{b s}{x-s}\right\}\Phi\\[4mm]
  &=\dsp b x(D_s+1)\frac{1-s}{x-s}\Phi \\[4mm]
  &=\dsp b x\frac{\partial}{\partial s}\left(\frac{s(1-s)}{x-s}\Phi\right).
\end{array}
\]
\end{proof} 

\begin{lemma}\label{wellknown2} The indefinite integral 
$$u=\int_p^s\Phi ds,\quad\Phi=s^{b-c}(1-s)^{c-a-1}(t-s)^{-b},\quad p\in\{0,1,t,\infty\}$$
solves $E_1(a,0,b,c:s,t)$. In particular, $S_1(a,0,b,c;s,t)\supset S(a,b,c;t)$.
\end{lemma}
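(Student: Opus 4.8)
The plan is to verify directly that $u$ is annihilated by the three operators $P_1,Q_1,R_1$ defining $E_1$ with parameter vector $(a,0,b,c)$ and variables $(s,t)$. Writing the integration variable as $\sigma$ to distinguish it from the upper limit, set $u(s,t)=\int_p^s\Phi(\sigma,t)\,d\sigma$ with $\Phi(\sigma,t)=\sigma^{b-c}(1-\sigma)^{c-a-1}(t-\sigma)^{-b}$. First I would record the three derivative formulas that drive everything: $u_s=\Phi(s,t)$ by the fundamental theorem of calculus, while logarithmic differentiation of $\Phi$ gives $u_{ss}=\big(\tfrac{b-c}{s}-\tfrac{c-a-1}{1-s}+\tfrac{b}{t-s}\big)u_s$ and $u_{st}=u_{ts}=\Phi_t(s,t)=\tfrac{-b}{t-s}\,u_s$. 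Thus every second derivative of $u$ is an explicit rational multiple of $u_s$.

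Specializing the operators gives $P_1=s(1-s)\pa_{ss}+t(1-s)\pa_{st}+(c-(a+1)s)\pa_s$ and $R_1=(s-t)\pa_{st}-b\pa_s$. The equation $R_1u=0$ is immediate, since $(s-t)\pa_{st}u=(s-t)\tfrac{-b}{t-s}u_s=b\,u_s$ cancels the term $b\pa_s u=b\,u_s$. For $P_1u$ I would substitute the derivative formulas and clear the common factor $u_s$: the $\tfrac{b}{t-s}$-piece of $s(1-s)u_{ss}$ gives $\tfrac{bs(1-s)}{t-s}$, which combines with $t(1-s)u_{st}=-\tfrac{bt(1-s)}{t-s}$ into $-b(1-s)$, and the remaining pieces together with $(c-(a+1)s)u_s$ group by powers of $s$; I expect the constant and linear parts to cancel identically, giving $P_1u=0$.

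The one substantive equation is $Q_1u=0$, and here Lemma \ref{wellknown1} does the work. Since $P(a,b,c;t)=t\,\mathcal{L}$ with $\mathcal{L}=t(1-t)\pa_{tt}+(c-(a+b+1)t)\pa_t-ab$ the Gauss operator in $t$, dividing that lemma by $t$ recasts it as $\mathcal{L}\Phi=b\,\pa_\sigma\big(\tfrac{\sigma(1-\sigma)}{t-\sigma}\Phi\big)$; the essential point is that $\mathcal{L}\Phi$ is an exact $\sigma$-derivative. Integrating in $\sigma$ from $p$ to $s$ and pulling $\mathcal{L}$ outside the integral yields $\mathcal{L}u=b\tfrac{s(1-s)}{t-s}\,u_s$, provided the boundary contribution at $\sigma=p$ is discarded. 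Writing $Q_1=\mathcal{L}+s(1-t)\pa_{st}-bs\pa_s$ and inserting $u_{st}=\tfrac{-b}{t-s}u_s$, I obtain $Q_1u=bs\,u_s\big(\tfrac{(1-s)-(1-t)}{t-s}-1\big)=0$, the bracket collapsing because $(1-s)-(1-t)=t-s$.

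I expect the main obstacle to be the two limiting operations buried in the $Q_1$ step: commuting the $t$-derivatives in $\mathcal{L}$ past $\int_p^s$, and discarding the endpoint value of $\tfrac{\sigma(1-\sigma)}{t-\sigma}\Phi=\sigma^{b-c+1}(1-\sigma)^{c-a}(t-\sigma)^{-b-1}$ at $\sigma=p$. For $p\in\{0,1,\infty\}$ the lower limit is $t$-independent and this boundary factor vanishes under the respective conditions $\re(c-b)<1$, $\re(c-a)>0$, $\re a>0$; for $p=t$ the lower limit itself depends on $t$ and $\Phi$ is singular at $\sigma=t$, so the identity must be read in the regularized (twisted-cycle) sense, in line with the non-integrality caveats already noted for Proposition \ref{prop:s2}. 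Finally the ``in particular'' is automatic: any function of $t$ alone is killed by $P_1$ and $R_1$ (every term carries a $\pa_s$) and is annihilated by $Q_1=\mathcal{L}$ exactly when it solves $E(a,b,c;t)$; moreover a difference $\int_{p_1}^s\Phi\,d\sigma-\int_{p_2}^s\Phi\,d\sigma=\int_{p_1}^{p_2}\Phi\,d\sigma$ of two of our solutions is such an $s$-independent member of $S_1$, exhibiting $S(a,b,c;t)\subset S_1(a,0,b,c;s,t)$.
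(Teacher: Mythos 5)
Your proposal is correct and follows essentially the same route as the paper: compute $u_s=\Phi$, $u_{ss}$, $u_{st}$ as rational multiples of $u_s$, check $P_1u=R_1u=0$ directly, and use Lemma \ref{wellknown1} (integrated in the $\sigma$-variable) to handle the Gauss operator inside $Q_1$, with the ``in particular'' following from the shape of the operators. Your added care about the vanishing of the boundary term at $\sigma=p$ and the interchange of $\partial_t$ with the integral is a welcome refinement of details the paper leaves implicit, but it does not change the argument.
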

\begin{proof}
Since $u_s=\Phi$, we have
$$ u_{ss}=\left(\frac{b-c}{s}-\frac{c-a-1}{1-s}-\frac{-b}{t-s}\right)u_s,\quad u_{st}=\frac{-b}{t-s}u_s.$$
Lemma \ref{wellknown1} leads to 
$$P(a,b,c;t)u=bt\frac{s(1-s)}{t-s}u_s.$$
Let $P_1,Q_1$ and $R_1$ be the operators generating the system $E_1(a,0,b,c:s,t)$:
$$\begin{array}{ll}
P_1(a,0,b,c;s,t)&
 =s(1-s)\partial_{ss}+t(1-s)\partial_{st}+(c-(a+1)s)\partial_s,\\[2mm]
Q_1(a,0,b,c;s,t)&
 =P(a,b,c;t)/t+s(1-t)\partial_{st}-bs\partial_s,\\[2mm]
R_1(a,0,b,c;s,t)&=(s-t)\partial_{st}-b\partial_s;\end{array}$$
refer to Section 1.
Note that $P(a,b,c;t)/t=t(1-t)\partial_{tt}+\cdots$. 
By using the above identities, we have
$$R_1u=(s-t)u_{st}-bu_s=0,$$
and
$$P_1u=s(1-s)\left(\frac{b-c}{s}-\frac{c-a-1}{1-s}-\frac{-b}{t-s}\right)u_s+t(1-s)\frac{bu_s}{s-t}+(c-(a+1)s)u_s=0,$$
$$Q_1u=bt\frac{s(1-s)}{t-s}u_s+\left(\frac{s(1-t)}{s-t}-bs\right)u_s=0.$$
Furthermore, for $z\in S(a,b,c;t)$, the forms of operators above imply
that $z$ lies in $S_1(a,0,b,c;s,t)$.
\end{proof} 

We now use the following fact:

\begin{fact} {\rm \cite[p. 78]{Bailey}. } \label{Bailey3}
$$F_1(a,b,b',c;x,y)=(1-x)^{-a}F_1\left(a,c-b-b',b',c;\frac{-x}{1-x},\frac{y-x}{1-x}\right).$$\end{fact}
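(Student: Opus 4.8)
The plan is to prove the transformation directly from the Euler integral representation of $F_1$ recorded in Section~1, namely
$$\frac{\Gamma(a)\Gamma(c-a)}{\Gamma(c)}F_1(a,b,b',c;x,y)=\int_0^1 t^{a-1}(1-t)^{c-a-1}(1-tx)^{-b}(1-ty)^{-b'}dt,$$
by means of a single M\"obius change of the integration variable. Concretely, I would substitute
$$t=\frac{v}{1-x+xv},\qquad\text{equivalently}\qquad v=\frac{(1-x)t}{1-xt},$$
which fixes the endpoints $t=0,1$ (since $v=0,1$ there) and is a bijection of $[0,1]$ onto itself. The point of this particular substitution is that it is the automorphism of $\P^1$ fixing $0$ and $1$ that sends the pole $t=1/x$ of $(1-tx)^{-b}$ to infinity; this is exactly what trades the exponent $b$ of the first Euler factor for the new exponent $c-b-b'$.

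First I would record the elementary factorizations produced by the substitution. Writing $w=1-x+xv=(1-x)\bigl(1-v\tfrac{-x}{1-x}\bigr)$, a direct computation gives $1-t=\tfrac{(1-x)(1-v)}{w}$, $1-tx=\tfrac{1-x}{w}$, $1-ty=\tfrac{1-x}{w}\bigl(1-v\tfrac{y-x}{1-x}\bigr)$, and $dt=\tfrac{1-x}{w^{2}}\,dv$. Substituting these into the integrand turns each of the five pieces $t^{a-1}$, $(1-t)^{c-a-1}$, $(1-tx)^{-b}$, $(1-ty)^{-b'}$, $dt$ into a power of $v$ or of $1-v$ times a power of $w$ times a power of $1-x$.

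The key bookkeeping step is then to collect the total powers of $w$ and of $1-x$. Adding the contributions one finds that the exponent of $w$ is $b+(1-a)+(a-c+1)-2+b'=b+b'-c$, while after splitting the surviving factor of $w$ as $(1-x)^{b+b'-c}\bigl(1-v\tfrac{-x}{1-x}\bigr)^{b+b'-c}$ the total exponent of $1-x$ collapses to $-a$. What remains is precisely
$$(1-x)^{-a}\int_0^1 v^{a-1}(1-v)^{c-a-1}\Bigl(1-v\tfrac{-x}{1-x}\Bigr)^{-(c-b-b')}\Bigl(1-v\tfrac{y-x}{1-x}\Bigr)^{-b'}dv,$$
since $b+b'-c=-(c-b-b')$. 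This last integral is, up to the same Gamma prefactor, the Euler representation of $F_1\bigl(a,c-b-b',b',c;\tfrac{-x}{1-x},\tfrac{y-x}{1-x}\bigr)$: the first Euler factor carries parameter $c-b-b'$ with argument $\tfrac{-x}{1-x}$, the second carries $b'$ with argument $\tfrac{y-x}{1-x}$. Cancelling the prefactor yields the stated identity.

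I expect the only real obstacle to be clerical rather than conceptual: keeping the four exponents straight so that the powers of $w$ and of $1-x$ combine to give exactly $w^{b+b'-c}$ and $(1-x)^{-a}$, and checking that the poles transform as claimed so that the endpoints of integration are genuinely preserved. No convergence issue beyond the usual $\re a>0$, $\re(c-a)>0$ arises in this derivation, and the resulting identity then extends to all parameters by analytic continuation.
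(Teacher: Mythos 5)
Your substitution argument is correct: with $t=v/(1-x+xv)$ and $w=1-x+xv$ the factorizations $1-t=\tfrac{(1-x)(1-v)}{w}$, $1-tx=\tfrac{1-x}{w}$, $1-ty=\tfrac{1-x}{w}\bigl(1-v\tfrac{y-x}{1-x}\bigr)$, $dt=\tfrac{1-x}{w^2}dv$ all check out, the net exponent of $w$ is indeed $b+b'-c$, and splitting $w=(1-x)\bigl(1-v\tfrac{-x}{1-x}\bigr)$ leaves a total factor $(1-x)^{-a}$ and the Euler integrand of $F_1\bigl(a,c-b-b',b',c;\tfrac{-x}{1-x},\tfrac{y-x}{1-x}\bigr)$ with the same prefactor $\Gamma(a)\Gamma(c-a)/\Gamma(c)$, which cancels. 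The comparison with the paper is somewhat degenerate: the paper does not prove Fact \ref{Bailey3} at all, it simply cites Bailey (p.~78) and uses the identity as a black box to pass from $E_1(a,b,b',b+b')$ to $E_1(a,0,b',b+b')$ in Section 2. Your derivation is therefore a genuine addition rather than an alternative, and it is exactly in the spirit of the integral-representation ``proofs'' the authors themselves give for Propositions \ref{prop:s2} and \ref{prop:s2g} (a M\"obius change of one integration variable chosen to send an unwanted singular point to $\infty$, followed by bookkeeping of exponents). As with those arguments, yours is valid only under the nondegeneracy/convergence conditions $\re a>0$, $\re(c-a)>0$ and for $x$ in a range where $[0,1]$ maps to itself under the substitution, with the general case recovered by analytic continuation in $x,y$ and in the parameters; you flag this, and it is the same caveat the paper attaches to its own integral-representation proofs. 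One small point worth making explicit if this were to be included: the branch of $(1-x)^{-a}$ and of the transformed arguments should be fixed by taking $x,y$ small and real, where every factor in both integrands is positive, so that the identity is one of single-valued germs at the origin.
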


\noindent From this fact we get, when $c=b+b'$,  
$$S_1(a,b,b',b+b';x,y)=(1-x)^{-a}S_1\left(a,0,b',b+b';
   \frac{-x}{1-x},\frac{y-x}{1-x}\right).$$
If we put 
$$y=\frac{x}{1-\eta}, $$ 
then
$$\frac{y-x}{1-x}=\frac{x\eta}{(1-x)(1-\eta)}.$$
Thus we have
$$\begin{array}{ll}
S_2(a,b,b',a,a;x,y)&\supset(1-y)^{-b'}S_1\left(b,a-b',b',a;x,\dsp\frac{x}{1-y}\right)\\[3mm]
&=(1-y)^{-b'}(1-x)^{-b}S_1\left(b,0,b',a;\dsp\frac{-x}{1-x},\frac{xy}{(1-x)(1-y)}\right)\\[3mm]
&\supset (1-y)^{-b'}(1-x)^{-b}
S\left(b,b',a;\dsp\frac{xy}{(1-x)(1-y)}\right).
\end{array}$$
This agrees with the inclusion in Proposition \ref{prop:s2g}. 
In particular, by the inclusion 
$$
(1-y)^{b'}(1-x)^{b}S_2(a,b,b',a,a;x,y)
\supset
S_1\left(b,0,b',a;\dsp\frac{-x}{1-x},\frac{xy}{(1-x)(1-y)}\right)
$$
and Lemma \ref{wellknown2} 
we get solution of $(1-y)^{b'}(1-x)^{b}E_2(a,b,b',a,a;x,y)$ 
represented by the indefinite integral:
$$f_1:=\int_0^\xx s^{b'-a}(1-s)^{a-b-1}(\xx\yy-s)^{-b'}ds,
\qquad\xx=\frac{-x}{1-x},\  \yy=\frac{-y}{1-y}.$$
Starting point of the path of integration can be any point $p\in\{0,1,t,\infty\}$, so we choose $p=0$, just for simplicity. 
By exchanging the role of $x$ and $y$, we get an inclusion
$$
(1-y)^{b'}(1-x)^{b}S_2(a,b,b',a,a;x,y)
\supset
S_1\left(b',b,0,a;\frac{xy}{(1-x)(1-y)},\dsp\frac{-y}{1-y},\right)
$$
and another solution of $(1-y)^{b'}(1-x)^{b}E_2(a,b,b',a,a;x,y)$
represented by the indefinite integral:
$$
\int_0^\yy s^{b-a}(1-s)^{a-b'-1}(\xx\yy-s)^{-b}ds.
$$
After the change $s\to \xx\yy/s$, it can be also expressed as 
$$f_2:=(\xx\yy)^{-a+1}\int_0^\xx s^{b'-1}(1-s)^{-b}(\xx\yy-s)^{a-b'-1}ds.$$
Thus, we have:

\begin{theorem}
\label{matome}
We have the following inclusions of the spaces of solutions:
$$
\begin{matrix}
(1-y)^{b'}(1-x)^{b}S_2(a,b,b',a,a;x,y)
& \supset &
S_1\left(b,0,b',a;\dsp\frac{-x}{1-x},\frac{xy}{(1-x)(1-y)}\right)
\\
\cup & & \cup
\\
S_1\left(b',b,0,a;\frac{xy}{(1-x)(1-y)},\dsp\frac{-y}{1-y},\right) 
& \supset &
S\left(b,b',a;\dsp\frac{xy}{(1-x)(1-y)}\right)
\end{matrix}
$$

Moreover, the collection of solutions
$(1-y)^{b'}(1-x)^{b}S_2(a,b,b',a,a;x,y)$ is spanned by $f_1,f_2$ and  $S\left(b,b',a;\dsp\frac{xy}{(1-x)(1-y)}\right).$\end{theorem}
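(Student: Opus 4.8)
The plan is to treat the two assertions separately, since the inclusions displayed in the diagram are already assembled from the preceding discussion and only the spanning statement carries genuinely new content. First I would record the four inclusions. The top one, $(1-y)^{b'}(1-x)^{b}S_2(a,b,b',a,a;x,y)\supset S_1(b,0,b',a;\frac{-x}{1-x},\frac{xy}{(1-x)(1-y)})$, is exactly the relation displayed just before the theorem, obtained from Proposition \ref{prop:s2} together with Fact \ref{Bailey3}; the left one is its image under the symmetry $x\leftrightarrow y$, $b\leftrightarrow b'$. The two remaining inclusions, which embed $S(b,b',a;\frac{xy}{(1-x)(1-y)})$ into each of the two $S_1$'s, are the two instances of Lemma \ref{wellknown2} (the second after the symmetry $x\leftrightarrow y$, using that Gauss is symmetric in its first two parameters) that underlie the constructions of $f_1$ and $f_2$. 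Stacking these four gives the diagram.

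For the spanning claim I would pass to coordinates adapted to the Gauss part. Regard $\xx=-x/(1-x)$ and $\yy=-y/(1-y)$ as coordinates and put $t=\xx\yy=xy/((1-x)(1-y))$; since $\yy=t/\xx$, the pair $(\xx,t)$ is again a birational coordinate system. In these coordinates every element of $S(b,b',a;t)$ depends on $t$ alone, whereas, writing $\xx\yy=t$ inside the integrals, $f_1=\int_0^{\xx}s^{b'-a}(1-s)^{a-b-1}(t-s)^{-b'}\,ds$ and $f_2=t^{1-a}\int_0^{\xx}s^{b'-1}(1-s)^{-b}(t-s)^{a-b'-1}\,ds$, in which the integrands depend only on $s$ and $t$ while all of the $\xx$-dependence is concentrated in the upper limit.

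The linear independence of $f_1,f_2$ and a basis of $S(b,b',a;t)$ then follows by applying $\partial/\partial\xx$ at fixed $t$. This operator annihilates $S(b,b',a;t)$, while the fundamental theorem of calculus gives $\partial_{\xx}f_1={\xx}^{b'-a}(1-\xx)^{a-b-1}(t-\xx)^{-b'}$ and $\partial_{\xx}f_2=t^{1-a}{\xx}^{b'-1}(1-\xx)^{-b}(t-\xx)^{a-b'-1}$, whose quotient $t^{a-1}{\xx}^{1-a}(1-\xx)^{a-1}(t-\xx)^{1-a}$ is a non-constant function of $\xx$ for generic $a$. Hence $\partial_{\xx}f_1$ and $\partial_{\xx}f_2$ are linearly independent, so from any relation $\alpha f_1+\beta f_2+g=0$ with $g\in S(b,b',a;t)$ one first obtains $\alpha=\beta=0$ and then $g=0$. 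Thus $f_1,f_2$ together with two independent Gauss solutions are four linearly independent elements of the space $(1-y)^{b'}(1-x)^{b}S_2$; since this space is four-dimensional ($E_2$ has rank four), they span it.

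The crux is this last independence step: one must rule out that $f_1$ or $f_2$ secretly lies in the two-dimensional Gauss space, or that $f_1$ and $f_2$ agree modulo it. Passing to the $(\xx,t)$ coordinates is precisely what makes this transparent, reducing the whole question to the elementary observation that the two integrands have non-constant ratio. The only caveat is that the parameters must be kept generic, so that the exponents are non-integral, $a\neq1$, and the coordinate change is non-degenerate; this is the same genericity already implicit in the power-series and integral proofs above.
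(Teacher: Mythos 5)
Your assembly of the inclusion diagram is exactly the paper's route: the paper states Theorem \ref{matome} with ``Thus, we have:'' as a summary of the immediately preceding chain built from Proposition \ref{prop:s2}, Fact \ref{Bailey3} (specialized to $c=b+b'$), Lemma \ref{wellknown2}, and the $x\leftrightarrow y$ symmetry, which is precisely what you cite. For the ``Moreover'' spanning claim the paper offers no argument at all, so your contribution there is a genuine supplement rather than a reproduction: working in the coordinates $(\xx,t)$ with $t=\xx\yy$, applying $\partial/\partial\xx$ at fixed $t$ to kill the Gauss part, and observing that the two integrands $\xx^{b'-a}(1-\xx)^{a-b-1}(t-\xx)^{-b'}$ and $t^{1-a}\xx^{b'-1}(1-\xx)^{-b}(t-\xx)^{a-b'-1}$ have the non-constant ratio $t^{a-1}\xx^{1-a}(1-\xx)^{a-1}(t-\xx)^{1-a}$ is a correct and clean way to get four independent solutions in the rank-four space. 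Your genericity caveat ($a\ne1$, non-integral exponents) is consistent with the non-integrality assumptions the paper itself acknowledges for the power-series and integral derivations; note that the specialization $a=4/3$ used later satisfies it.
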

{This will play a key role to understand the Schwarz map of a system $E_2$ with specific parameters, which will be introduced in \S \ref{E36}.}

\section{Monodromy representation of $E_2$}
\label{sec:monod}
In this section, we  {study the} monodromy representation of 
$E_2=E_2(a,b,b',c,c')$.
Though {it is assumed} in \cite{MY} that the parameters 
satisfy the irreducibility condition in Fact 1.1:
$$a,\ b,\ b',\ c-b,\ c'-b',\ a-c-c',\quad a-c,\ a-c'\notin \Z,$$
in this section, we only assume the weaker condition
\begin{equation}
\label{eq:semi-non-int}
a,\ b,\ b',\ c-b,\ c'-b',\ a-c-c'\notin \Z.
\end{equation}
We modify Theorem 7.1 in \cite{MY} so that the {statements remain valid for these parameters.} We will apply the result of this section in \S \ref{special}.
\subsection{Twisted homology groups and the intersection form}{Let} 
$$X=\{(x,y)\in \C^2\mid x(x-1)y(y-1)(x+y-1)\ne0\}$$ 
be the complement of the singular locus 
of $E_2$. 
For each $(x,y)\in X$, we consider  a multi-valued function  
$$
\psi=t_1^{b-1}(1-t_1)^{c-b-1}t_2^{b'-1}(1-t_2)^{c'-b'-1}(1-t_1x-t_2y)^{-a},
$$
on  
$$T_{x,y}=\{(t_1,t_2)\in \C^2\mid 
t_1(1-t_1)t_2(1-t_2)(1-t_1x-t_2y)
\ne0\}.$$ 
\newcommand{\T}{T}
As in \S1.6 and \S1.7 of Chapter 2 in \cite{AK}, we define   
the twisted homology group $H_2(T_{x,y},\psi)$ associated with $\psi$ 
and locally finite one $\Hl_2(T_{x,y},\psi)$. {Under some genericity condition, the integral of $\psi$ over a twisted cycle gives a solution of $E_2$.}

If $a-c,\ a-c'\notin \Z$ then 
the natural map $\imath:H_2(T_{x,y},\psi)\to\Hl_2(T_{x,y},\psi)$ 
is bijective, {and the}  inverse map
$\mathrm{reg}:\Hl_2(T_{x,y},\psi)\to H_2(T_{x,y},\psi)$ 
is called the regularization. 
{In general,} the map $\imath:H_2(T_{x,y},\psi) \to\Hl_2(T_{x,y},\psi)$ is neither injective 
nor surjective, {however we still have}
the isomorphism
$$\mathrm{reg}:\im(\imath)\to H_2(T_{x,y},\psi)/\ker(\imath).$$

{Under condition (3.1), thanks to} the vanishing theorem of cohomology groups in \cite{C},  rank of $\Hl_2(T_{x,y},\psi)$ and $H_2(T_{x,y},\psi)$ are
equal to the Euler number $\chi(T_{x,y})=4$ of $T$, and the bilinear form --  {\it the intersection form} -- 
$$\fI: \Hl_2(T_{x,y},\psi)\times H_2(T_{x,y},\psi^{-1})\to \C$$  
is non-degenerate.

\subsection{Monodromy representation}{
Let $U$ be a small simply connected domain in $X$.
We can identify the local solution space to $E_2$ 
on $U$ with the trivial vector bundle 
$\bigcup_{(x,y)\in U} \Hl_2(T_{x,y},\psi)$ via the Euler type 
integral representation of solutions to $E_2$.
The monodromy representation of 
$E_2$ is equivalent to that of the local system 
$$
\mathcal{H}_2^{\mathrm{lf}}(\psi)= 
\bigcup_{(x,y)\in X} \Hl_2(T_{x,y},\psi)$$
over $X$.
We also consider a local system 
$$\mathcal{H}_2(\psi^{-1})=
\bigcup_{(x,y)\in X} H_2(T_{x,y},\psi^{-1})$$
over $X$. 
We fix a small positive real number $\e$, and let $(\e,\e)$ a base point in $X$. Denote the germs at this point of the local systems 
$\mathcal{H}_2^{\mathrm{lf}}(\psi)$ and $\mathcal{H}_2(\psi^{-1})$
by 
$$\Hl_2(T,\psi),\quad H_2(T,\psi^{-1}),\qquad T=T_{\e,\e},$$
respectively. 
Let
\begin{eqnarray*}
\CM^\mu&:&\pi_1(X,(\e,\e))\ni \rho \mapsto 
\CM^\mu_\rho\in \mathrm{GL}(\Hl_2(T,\psi)),\\
\cCM^{-\mu}&:&\pi_1(X,(\e,\e))\ni \rho \mapsto 
\cCM^{-\mu}_\rho\in \mathrm{GL}(H_2(T,\psi^{-1}))
\end{eqnarray*}
be the monodromy representations of 
$\mathcal{H}_2^{\mathrm{lf}}(\psi)$ and $\mathcal{H}_2(\psi^{-1})$
with respect to $(\e,\e)$.
$\CM^\mu_\rho$ and $\cCM^{-\mu}_\rho$ are 
called the circuit transformations along $\rho$.
\begin{proposition}
\label{prop:inv-subspace}
\begin{itemize}
\item[$(1)$] 
The image $\im(\imath)$ of the natural map 
$\imath:H_2(T,\psi)\to \Hl_2(T,\psi)$ 
is invariant under the monodromy representation $\CM^\mu$. 
\item[$(2)$] 
The kernel $\ker(\imath')$ of the natural map 
$\imath':H_2(T,\psi^{-1})\to \Hl_2(T,\psi^{-1})$ 
is invariant under the monodromy representation $\cCM^{-\mu}$. 
\end{itemize}
\end{proposition}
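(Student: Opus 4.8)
The plan is to deduce both assertions from a single structural fact: the natural maps $\imath$ and $\imath'$ are \emph{morphisms of local systems} over $X$, and the image (resp.\ kernel) of a morphism of local systems is automatically a sub-local system, hence stable under the monodromy action. Recall that giving a local system on $X$ is the same as giving a representation of $\pi_1(X,(\e,\e))$, and that $\CM^\mu$ (resp.\ $\cCM^{-\mu}$) is exactly the parallel transport along loops in the local system $\mathcal{H}_2^{\mathrm{lf}}(\psi)$ (resp.\ $\mathcal{H}_2(\psi^{-1})$).

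First I would check that the fibrewise maps $\imath:H_2(T_{x,y},\psi)\to\Hl_2(T_{x,y},\psi)$ of \cite{AK} glue to a morphism of local systems $\imath:\mathcal{H}_2(\psi)\to\mathcal{H}_2^{\mathrm{lf}}(\psi)$ over $X$, where $\mathcal{H}_2(\psi)=\bigcup_{(x,y)\in X}H_2(T_{x,y},\psi)$. The point is that $\imath$ is canonical: it is induced by the inclusion of finite (compactly supported) twisted chains into locally finite ones, and involves no choices. Therefore it commutes with the parallel transport defined by continuously deforming twisted cycles as $(x,y)$ moves along a path and the configuration $T_{x,y}$ varies. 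Writing $N_\rho$ for the monodromy of $\mathcal{H}_2(\psi)$ along $\rho$, this compatibility is the relation
$$\CM^\mu_\rho\circ\imath=\imath\circ N_\rho,\qquad \rho\in\pi_1(X,(\e,\e)).$$

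Granting this, statement $(1)$ follows formally: for $v=\imath(w)\in\im(\imath)$ we get $\CM^\mu_\rho(v)=\imath(N_\rho w)\in\im(\imath)$, so $\im(\imath)$ is stable under every circuit transformation. For statement $(2)$ I would run the identical construction with $\psi$ replaced by $\psi^{-1}$, producing a morphism of local systems $\imath':\mathcal{H}_2(\psi^{-1})\to\mathcal{H}_2^{\mathrm{lf}}(\psi^{-1})$ and, writing $L_\rho$ for the monodromy of $\mathcal{H}_2^{\mathrm{lf}}(\psi^{-1})$ along $\rho$, the equivariance $\imath'\circ\cCM^{-\mu}_\rho=L_\rho\circ\imath'$. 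Then for $w\in\ker(\imath')$ we obtain $\imath'(\cCM^{-\mu}_\rho w)=L_\rho(\imath'(w))=0$, whence $\cCM^{-\mu}_\rho w\in\ker(\imath')$; thus $\ker(\imath')$ is invariant.

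The only substantive point --- and the step I expect to demand the most care --- is the gluing/naturality claim, namely that $\imath$ really does commute with parallel transport in the two local systems. Concretely one must verify that, under a small deformation of $(x,y)$, the continuation of a compactly supported twisted cycle stays compactly supported and that its class maps under $\imath$ to the continuation of the corresponding locally finite class. This is the expected functoriality of the Aomoto--Kita formalism \cite{AK}, but it is the place where the topology of the varying fibers $T_{x,y}$ actually enters; everything afterwards is the elementary representation-theoretic observation that images and kernels of $\pi_1$-equivariant maps are invariant subspaces.
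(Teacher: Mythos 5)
Your proposal is correct and follows essentially the same route as the paper: the paper's own proof likewise reduces both statements to the observation that the natural maps $\imath$ and $\imath'$ commute with the circuit transformations (i.e.\ are morphisms of local systems), after which invariance of the image and the kernel is formal. You simply make explicit the equivariance relations and the naturality check that the paper leaves implicit.
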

\begin{proof}
It is clear that $H_2(T,\psi)$ and $H_2(T,\psi^{-1})$ are invariant 
under $\CM^\mu$ and $\cCM^{-\mu}$, respectively. 
We have only to note that 
the natural maps $\imath$ and $\imath'$ commute with $\CM^\mu$ and $\cCM^{-\mu}$.
\end{proof}
\begin{remark}
We will see that if $a-c\in \Z$ or $a-c'\in \Z$, then both of
$\im(\imath)$ and $\ker(\imath')$ are 
proper subspaces. 
Thus monodromy representations $\CM^\mu$ and $\cCM^{-\mu}$ 
are reducible in this case.
\end{remark}
}

\begin{lemma}
\label{lem:duality}
\begin{enumerate}
\item[$(1)$] Let $\Delta$ and $\cD$
be elements of 
$\Hl_2(T,\psi)$ and $H_2(T,\psi^{-1})$, respectively. Then we have
$$\fI(\CM_\rho^\mu(\Delta),\cCM_\rho^{-\mu}(\cD))=
\fI(\Delta,\cD).
$$
\item[$(2)$] 
Suppose that $W=\Hl_2(T,\psi)$ is decomposed into the direct sum 
of the eigenspaces $W_1,\dots, W_r$ of $\CM_\rho^\mu$ 
of eigenvalues $\lambda_1,\dots,\lambda_r$. Then 
$\widetilde W=H_2(T,\psi^{-1})$ is decomposed into the direct sum 
of the eigenspaces $\widetilde W_1,\dots, \widetilde W_r$ of 
$\cCM_\rho^{-\mu}$ of eigenvalues $\lambda_1^{-1},\dots,\lambda_r^{-1}$.
The eigenspace $\widetilde W_i$ is characterized as
$$\widetilde W_i=\bigcap_{1\le j\le r}^{j\ne i} W_j^\perp,\quad 
W_j^\perp=\{\widetilde w \in \widetilde W
\mid \fI(w_j,\widetilde w)=0\ \textrm{ for any } w_j\in W_j\}.$$
\end{enumerate}
\end{lemma}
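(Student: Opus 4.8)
The two parts have rather different flavors: part (1) is geometric and reflects the flatness of the intersection pairing, whereas part (2) is a purely formal consequence of (1) together with the non-degeneracy of $\fI$ recorded above. For (1) the plan is to exhibit $\fI$ as a morphism of local systems over $X$. Recall that $\fI$ is built from the topological intersection number of a locally finite $2$-cycle carrying the coefficient system $\psi$ against an ordinary $2$-cycle carrying $\psi^{-1}$; since $\psi\cdot\psi^{-1}=1$, the two coefficient systems are mutually inverse and their tensor product is canonically the trivial system $\C_X$. Hence $\fI$ defines a horizontal (flat) pairing $\mathcal{H}_2^{\mathrm{lf}}(\psi)\otimes\mathcal{H}_2(\psi^{-1})\to\C_X$. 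The circuit transformations $\CM_\rho^\mu$ and $\cCM_\rho^{-\mu}$ are exactly the parallel transports of these local systems along $\rho$, so the identity $\fI(\CM_\rho^\mu\Delta,\cCM_\rho^{-\mu}\cD)=\fI(\Delta,\cD)$ is just the statement that a flat pairing is preserved under simultaneous parallel transport of both arguments. Concretely, one represents $\Delta$ and $\cD$ by cycles near the base point, drags them continuously along $\rho$, and observes that the topological intersection number is invariant under this homotopy while the coefficient factors cancel because $\psi$ and $\psi^{-1}$ are inverse. I expect the only subtle point to be the compatibility of the regularization $\mathrm{reg}$ with the circuit maps in the locally finite theory of \cite{AK}, which is already implicit in Proposition \ref{prop:inv-subspace}.

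For part (2) I would argue formally from (1). Write $A=\CM_\rho^\mu$ on $W=\Hl_2(T,\psi)$ and $B=\cCM_\rho^{-\mu}$ on $\widetilde W=H_2(T,\psi^{-1})$, so that (1) reads $\fI(Aw,B\widetilde w)=\fI(w,\widetilde w)$. Setting $\widetilde W_i:=\bigcap_{j\ne i}W_j^\perp$, I first check that $B$ preserves $\widetilde W_i$: for $\widetilde w\in\widetilde W_i$ and $w_j\in W_j$ with $j\ne i$, substituting $A^{-1}w_j$ into the invariance gives $\fI(w_j,B\widetilde w)=\fI(A^{-1}w_j,\widetilde w)=\lambda_j^{-1}\fI(w_j,\widetilde w)=0$, so indeed $B\widetilde w\in\widetilde W_i$. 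Next, for $w_i\in W_i$ the invariance yields $\lambda_i\,\fI(w_i,B\widetilde w)=\fI(w_i,\widetilde w)$, whence $B\widetilde w-\lambda_i^{-1}\widetilde w$ is $\fI$-orthogonal to $W_i$; being already orthogonal to every $W_j$ with $j\ne i$ (since both $B\widetilde w$ and $\widetilde w$ lie in $\widetilde W_i$), it is orthogonal to all of $W=\bigoplus_j W_j$ and therefore vanishes by non-degeneracy. Thus $B$ acts on $\widetilde W_i$ as the scalar $\lambda_i^{-1}$.

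It remains to record the dimension bookkeeping. Non-degeneracy of $\fI$ identifies $\widetilde W$ with the dual $W^*$, under which $\widetilde W_i$ becomes the annihilator of $\bigoplus_{j\ne i}W_j$, so $\dim\widetilde W_i=\dim W_i$. Since $\sum_i\dim W_i=\dim W=\dim\widetilde W$ (both ranks equal $\chi(T_{x,y})=4$) and the spaces $\widetilde W_i$ are eigenspaces for the distinct eigenvalues $\lambda_i^{-1}$, they are independent and exhaust $\widetilde W$, giving the asserted direct sum decomposition; the characterization $\widetilde W_i=\bigcap_{j\ne i}W_j^\perp$ then holds by construction. The main obstacle throughout is part (1): once the intersection form is known to be monodromy-invariant, part (2) is routine linear algebra driven by non-degeneracy.
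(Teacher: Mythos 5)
Your proof is correct, and part (1) is exactly the paper's argument: the authors dispose of it in one line by noting that $\fI(\Delta,\cD)$ is stable under small deformations of $\Delta$ and $\cD$, which is precisely your flat-pairing/parallel-transport formulation. For part (2) you follow the same skeleton as the paper --- set $\widetilde W_i=\bigcap_{j\ne i}W_j^\perp$, show $\cCM_\rho^{-\mu}$ preserves it by pairing against $\CM_\rho^\mu(w_j)$, then count dimensions via non-degeneracy --- but your middle step is executed differently and, I would say, more cleanly. The paper picks an eigenvector $w_1'$ of the restriction of $\cCM_\rho^{-\mu}$ to $\widetilde W_1$, pairs it with some $w_1\in W_1$ satisfying $\fI(w_1,w_1')\ne0$ to force the eigenvalue to be $\lambda_1^{-1}$, and then must iterate the argument on $\widetilde W_1\cap\{w_1\}^\perp$ to conclude that \emph{every} eigenvalue of the restriction equals $\lambda_1^{-1}$ (i.e.\ that the restriction is semisimple with a single eigenvalue). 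You shortcut this entirely: for $\widetilde w\in\widetilde W_i$ the vector $\cCM_\rho^{-\mu}(\widetilde w)-\lambda_i^{-1}\widetilde w$ is orthogonal to $W_i$ by the invariance identity and to every $W_j$, $j\ne i$, because $\widetilde W_i$ is $\cCM_\rho^{-\mu}$-stable, hence it vanishes by non-degeneracy of $\fI$. This shows in one stroke that the restriction is the scalar $\lambda_i^{-1}$, avoiding both the eigenvector-peeling recursion and the slightly delicate bookkeeping $\dim W_i'=\dim W_i\le\dim\widetilde W_i$ that the paper needs; your final dimension count via the identification $\widetilde W\cong W^*$ then matches the paper's. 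The one point you flag as subtle in (1) --- compatibility of $\mathrm{reg}$ with the circuit maps --- is indeed left implicit in the paper as well, so nothing is lost there.
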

\begin{proof}
(1) {The intersection number $\fI(\Delta,\cD)$ is stable under small deformations of $\Delta$ and $\cD$.}

\medskip\noindent
(2) Since $\CM_\rho^\mu$ belongs to the general linear group, we have 
$\lambda_1\cdots \lambda_r\ne0$.
Let $w_j$ be any element of $W_j$ $(j=2,\dots,r)$ and 
let $\widetilde w$ be any element of 
$W'_1=\bigcap\limits_{2\le j\le r} W_j^\perp$.
Since we have 
$$
\lambda_j\fI(w_j,\cCM_\rho^{-\mu}(\widetilde w))
=\fI(\lambda_jw_j,\cCM_\rho^{-\mu}(\widetilde w))
=\fI(\CM_\rho^\mu(w_j),\cCM_\rho^{-\mu}(\widetilde w))
=\fI(w_j,\widetilde w)=0$$
for $j=2,\dots,r$, $\cCM_\rho^{-\mu}(\widetilde w)$ belongs to 
the space $W'_1$. 
Thus $\cCM_\rho^{-\mu}$ induces a linear transformation of $W'_1$.
Let $\lambda'$ be an eigenvalue of the restriction of $\cCM_\rho^{-\mu}$ 
to $W'_1$, and let $w'_1\in W'_1$ be an eigenvector of $\lambda'$.
Since the intersection from $\fI$ is non degenerate, there exists 
$w_1\in W_1$ 
such that $\fI(w_1,w'_1)\ne 0$. Note that 
$$0\ne \fI(w_1,w'_1)=\fI(\CM_\rho^{\mu}(w_1),\cCM_\rho^{-\mu}(w'_1))
=\fI(\lambda_1w_1,\lambda'w'_1)=(\lambda_1\lambda')\cdot \fI(w_1,w'_1).
$$
Hence we have $\lambda_1\lambda'=1$, i.e., $\lambda'=\lambda_1^{-1}$ and 
$W'_1\subset \widetilde W_1$. Similarly, we have
$W'_i\subset \widetilde W_i$ for $i=2,\dots,r$. 
To show $W_i'= \widetilde W_i$, 
we consider the restriction of $\cCM_\rho^{-\mu}$ to 
$W''_1=W'_1\cap \{w_1\}^\perp$,   where 
$$\{w_1\}^\perp=\{\widetilde w\in \widetilde W\mid \fI(w_1,\widetilde w)=0\}.$$
Take its eigenvector ${\widetilde w}'$ and repeat the above argument. 
In this way, we have 
$$\dim W_1'=\dim W_1\le \dim  \widetilde W_1'.$$
Since 
$$\dim W= \sum_{i=1}^r \dim W_i\le 
\sum_{i=1}^r\dim \widetilde W_i\le \dim \widetilde W,\quad 
\dim W=\dim \widetilde W,
$$
we have  $\dim W_i'=\dim W_i= \widetilde W_i$ for $i=1,\dots,r$. 
\end{proof}

\subsection{Twisted cycles}
Let $\square_j$ $(i=1,\dots,6)$ be locally finite chains shown in Figure 
\ref{fig:basis}. 
We specify a branch of $\psi$ on each chain by the assignment of 
$\arg(f_j)$ on it 
as in Table \ref{tab:arg},
where 
$$f_1=t_1,\quad f_2=1-t_1,\quad 
 f_3=t_2,\quad f_4=1-t_2,\quad f_5=1-t_1x-t_2y,$$
{and load $\psi$ to get the 
 locally finite twisted cycles $\square_i^\psi\in\Hl_2(T,\psi)$.}
\begin{figure}[htb]
\begin{center}
\includegraphics[width=10cm]{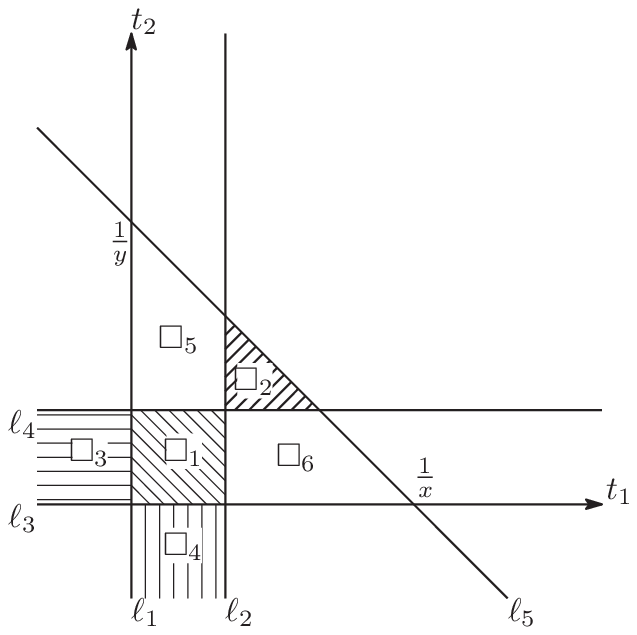}
\end{center}
\caption{$2$-cycles}
\label{fig:basis}
\end{figure}
\begin{figure}[htb]
\begin{center}
\includegraphics[width=12cm]{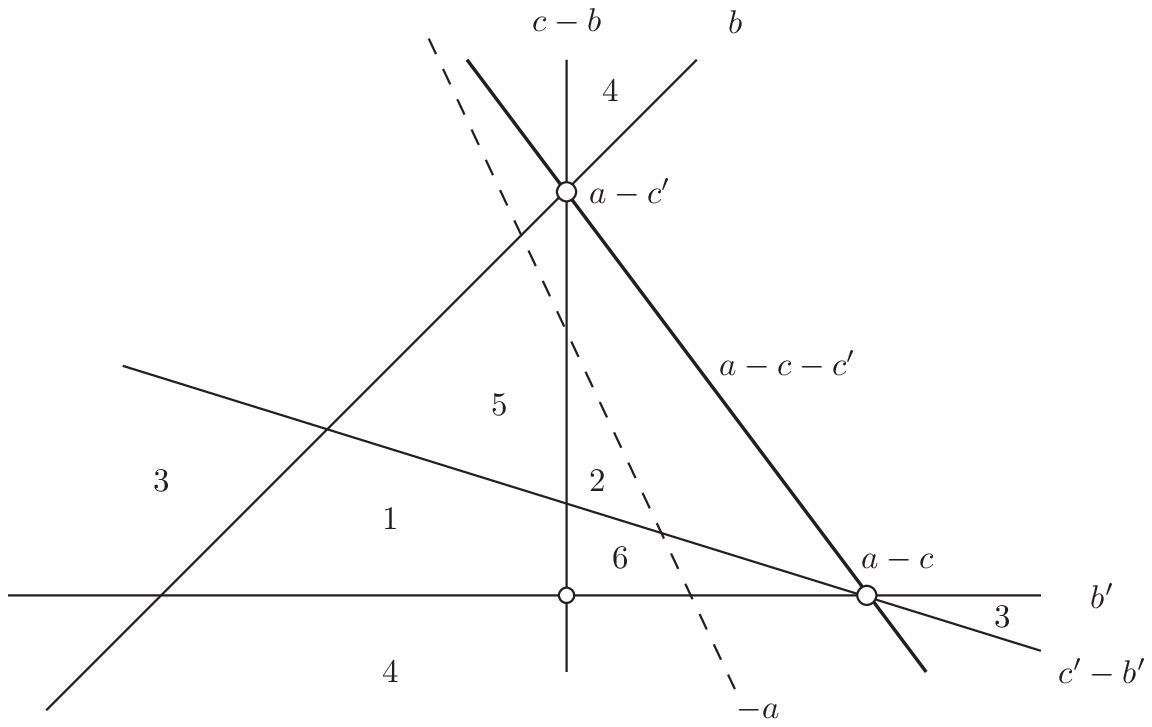}
\end{center}
\caption{Exponents around the line at infinity}
\label{P2fig}
\end{figure}
\begin{table}[htb]
  \centering
  \begin{tabular}[htb]{|c|c|c|c|c|c|}
\hline
 & $f_1=t_1$ & $f_2=1-t_1$ & $f_3=t_2$ & $f_4=1-t_2$ & 
$f_5=1-t_1x-t_2y$\\
\hline
$\square_1$ &  $0$  &   $0$   & $0$   & $0$     & $0$   \\
$\square_2$ &  $0$  &   $\pi$ & $0$   & $\pi$   & $0$   \\
$\square_3$ &$-\pi$ &   $0$   & $0$   & $0$     & $0$   \\
$\square_4$ &   $0$ &   $0$   &$-\pi$ & $0$     & $0$   \\
\hline
$\square_5$ &  $0$  &   $0$   & $0$   & $\pi$     & $0$   \\
$\square_6$ &  $0$  &   $\pi$ & $0$   & $0$   & $0$   \\
\hline
  \end{tabular}
  \caption{List of $\arg(f_j)$ on $\square_i$}
  \label{tab:arg} 
\end{table}
It {will be shown} that 
$\Delta_1=\square_1^\psi,\dots, \Delta_4=\square_4^\psi$ form a basis 
of $\Hl_2(T,\psi)$ in Corollary \ref{cor:basis}.

{We choose} elements in $H_2(T,\psi^{-1})$ as 
$${\cD_1}=\mathrm{reg}(\square_1^{\psi^{-1}}),\quad 
{\cD_2}=\mathrm{reg}(\square_2^{\psi^{-1}}),
$$
$$
{\cD_3}=(\mu_{125}-1)
\mathrm{reg}(\square_3^{\psi^{-1}}),\quad  
{\cD_4}=(\mu_{345}-1)
\mathrm{reg}(\square_4^{\psi^{-1}}),$$  
where {$\mu_{ij\cdots}=\mu_i\mu_j\cdots$} and  
$$\mu_1=e^{2\pi\sqrt{-1}b},\ \mu_2=e^{2\pi\sqrt{-1}(c-b)},\ 
\mu_3=e^{2\pi\sqrt{-1}b'},\ \mu_4=e^{2\pi\sqrt{-1}(c'-b')},\ 
\mu_5=e^{-2\pi\sqrt{-1}a}.$$
{Note that in terms of $\mu_j$'s the irreducible condition in Fact 1.1 is
$$\mu_i(i=1,\dots,5),\ \mu_{12345},\quad \mu_{125},\ \mu_{345}\not=1,$$
the condition (3.1) is
$$\mu_i(i=1,\dots,5),\ \mu_{12345}\not=1,$$
and (see Figure \ref{P2fig})
$$c-a\in \Z\Leftrightarrow \mu_{125}=1,\quad c'-a\in \Z\Leftrightarrow \mu_{345}=1,$$
and that $(a,b,c)=(4/3,\mathbf{2/3},\mathbf{4/3})$ satisfies (3.1) and $\mu_{ijk}=1$ for any $i,j,k$.}
Explicitly, ${\cD_3}$ and ${\cD_4}$ can be written as:
$$(\mu_{125}-1)\Big[\big(\frac{\circlearrowleft_\infty^+}{\mu_{125}-1}
+[\frac{-1}{\delta},-\delta]-
\frac{\circlearrowleft_{0}^-}{\mu_1^{-1}-1}\big)\times 
\big(\frac{\circlearrowleft_0^+}{\mu_3^{-1}-1}
+[\delta,1-\delta]-
\frac{\circlearrowleft_{1}^-}{\mu_4^{-1}-1}\big)\Big]^{\psi^{-1}}, 
$$
$$(\mu_{345}-1)\Big[
\big(\frac{\circlearrowleft_0^+}{\mu_1^{-1}-1}
+[\delta,1-\delta]-
\frac{\circlearrowleft_{1}^-}{\mu_2^{-1}-1}\big)
\times 
\big(\frac{\circlearrowleft_\infty^+}{\mu_{345}-1}
+[\frac{-1}{\delta},-\delta]-
\frac{\circlearrowleft_{0}^-}{\mu_3^{-1}-1}\big)\Big]^{\psi^{-1}}, 
$$
where $\delta$ is a small positive real number, 
$[\frac{-1}{\delta},-\delta]$ and $[\delta,1-\delta]$ are 
closed intervals, 
$\circlearrowleft_\infty^+$ is the negatively oriented circle 
of which radius, center and terminal are $1/\delta$, $0$ and $-1/\delta$,  
$\circlearrowleft_q^\pm$ $(q=0,1)$ is the positively oriented circle of 
which radius, center and terminal are $\delta$, $q$ and $q\pm \delta$.

{Notice that the definition of the twisted cycles ${\cD_i}$ 
$(i=1,\dots,4)$ make sense} even in the case $a-c\in \Z$ or $a-c'\in \Z$.
{Indeed, this specialization gives no harm to  $\cD_i$ $(i=1,2)$, and thanks to the above expression, when $\mu_{125}=1$ and $\mu_{345}=1$,} we have

\begin{eqnarray*}
{\cD_3}&=&
\Big[\circlearrowleft_\infty^+\times 
\big(\frac{\circlearrowleft_0^+}{\mu_3^{-1}-1}
+[\delta,1-\delta]-
\frac{\circlearrowleft_{1}^-}{\mu_4^{-1}-1}\big)\Big]^{\psi^{-1}}, \\
{\cD_4}&=&
\Big[
\big(\frac{\circlearrowleft_0^+}{\mu_1^{-1}-1}
+[\delta,1-\delta]-
\frac{\circlearrowleft_{1}^-}{\mu_2^{-1}-1}\big)
\times 
\circlearrowleft_\infty^+\Big]^{\psi^{-1}},
\end{eqnarray*}
respectively.

\begin{remark}  
\label{rem:kernel}
Suppose that $a-c,\ a-c'\in \Z$.  
Then the twisted cycles ${\imath(}{\cD_3})$ 
and ${\imath(}{\cD_4})$ are homologous to $0$ 
{in} $\Hl_2(T,\psi^{-1})$, since 
they are the boundary  of locally finite $3$-chains 
given by the replacement $\circlearrowleft_\infty^+\to 
\odot_\infty^+$ in their expressions, where 
$\odot_\infty^+$ is the annulus $\{t\in \C\mid 1/\delta\le |t|\}$.
They belong to $\ker(\imath')$.
{By Proposition \ref{prop:int-mat}, 
it turns out that $\im(\imath)$ is spanned by $\De_1$ and $\De_2$.}
\end{remark}

\subsection{Intersection matrices}
\begin{proposition}
\label{prop:int-mat}
The intersection matrix 
$H^\mu=\big(\fI(\Delta_i,{\cD_j})\big)_{1\le i,j,\le4}$ 
for $\Delta_1$,$\dots$,$\Delta_4\in \Hl_2(T,\psi)$
and ${\cD}_1$,$\dots$,${\cD}_4\in 
H_2(T,\psi^{-1})$ is given by
$$\left(
\begin {array}{cccc} 
\frac{(\mu_{12}-1 )(\mu_{34}-1)}
{({\mu_1}-1)({\mu_2}-1)({\mu_3}-1)({\mu_4}-1)}
&\frac {1}{({\mu_2}-1 ) ({\mu_4}-1 )}
&-\frac { {\mu_1} (\mu_{34}-1 )(\mu_{125}-1 )}
{({\mu_1}-1)({\mu_3}-1)({\mu_4}-1 ) }
&-\frac {{\mu_3}(\mu_{12}-1)(\mu_{345}-1 )}
{({\mu_1}-1 )({\mu_2}-1 )({\mu_3}-1)}
\\ 
\noalign{\medskip}{\frac {\mu_{24}}{({\mu_2}-1 )({\mu_4}-1 )}}
&{\frac {\mu_{245}-1}{ ( {\mu_2}-1 ) ( {\mu_4}-1 )( {\mu_5}-1 ) }}
&0&0
\\ 
\noalign{\medskip}-{\frac {\mu_{34}-1}{ ( {\mu_1}-1 )  
( {\mu_3}-1 )  ( {\mu_4}-1 ) }}
&0
&\frac{{\mu_1}(\mu_{34}-1)(\mu_{25}-1)}
{({\mu_1}-1)({\mu_3}-1 )({\mu_4}-1 )}
&{\frac {{\mu_3}(\mu_{345}-1 ) }{ ( {\mu_1}-1 )({\mu_3}-1 ) }}
\\ 
\noalign{\medskip}
-{\frac {\mu_{12}-1}{ ( {\mu_1}-1 )({\mu_2}-1 )({\mu_3}-1 ) }}
&0
&{\frac {{\mu_1}(\mu_{125}-1)}{({\mu_1}-1 )({\mu_3}-1 ) }}
&{\frac {{\mu_3}(\mu_{12}-1)(\mu_{45}-1 )}
{ ( {\mu_1}-1 )  ( {\mu_2}-1 )  ( {\mu_3}-1 ) }}
\end {array} \right).
$$
Its determinant is 
$$
\frac{\mu_{1234}(\mu_5-1)(\mu_{12345}-1)}
{(\mu_1-1)^2(\mu_2-1)^2(\mu_3-1)^2(\mu_4-1)^2},
$$
which does not vanish under the assumption $(\ref{eq:semi-non-int})$.
\end{proposition}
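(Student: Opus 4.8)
The plan is to compute the sixteen intersection numbers $\fI(\De_i,\cD_j)$ one by one using the intersection theory of twisted cycles for a line arrangement, as developed in Chapter 2 of \cite{AK}, and then to evaluate the determinant by a single Laplace expansion.

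First I would set up the local building blocks. For real representatives of the base point with $0<\e\ll1$, the five lines $f_1,\dots,f_5$ cut the $(t_1,t_2)$-space into chambers, and the four chains $\square_1,\dots,\square_4$ are exactly four of these chambers: $\square_1$ is the square $[0,1]^2$ and $\square_3,\square_4$ are the unbounded product strips $(-\infty,0)\times(0,1)$ and $(0,1)\times(-\infty,0)$, none of which meets $f_5$, whereas $\square_2$ is the bounded triangle $\{t_1>1,\ t_2>1,\ f_5>0\}$ whose three sides lie on $f_2,f_4,f_5$. The basic intersection formulas I would invoke are: a regularized loaded interval with endpoint monodromies $\nu,\nu'$ has self-intersection $(\nu\nu'-1)/((\nu-1)(\nu'-1))$; a product (rectangular) chamber has self-intersection equal to the product of the two interval self-intersections; a simplicial (triangular) chamber with side-monodromies $\nu,\nu',\nu''$ has self-intersection $(\nu\nu'\nu''-1)/((\nu-1)(\nu'-1)(\nu''-1))$; and two chambers sharing a single vertex where facets of monodromies $\nu,\nu'$ meet contribute $\pm1/((\nu-1)(\nu'-1))$ up to the branch ratio recorded by the argument assignments of Table \ref{tab:arg}, while chambers meeting in neither a facet nor a vertex intersect trivially.

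Reading the side-monodromies off the dictionary $\mu_1=e^{2\pi\sqrt{-1}b},\dots,\mu_5=e^{-2\pi\sqrt{-1}a}$, the diagonal entries follow at once; for example $\De_1=[0,1]^2$ gives
$$\fI(\De_1,\cD_1)=\frac{\mu_1\mu_2-1}{(\mu_1-1)(\mu_2-1)}\cdot\frac{\mu_3\mu_4-1}{(\mu_3-1)(\mu_4-1)},$$
and $\De_2$, being the triangle with sides on $f_2,f_4,f_5$, gives $\fI(\De_2,\cD_2)=(\mu_{245}-1)/((\mu_2-1)(\mu_4-1)(\mu_5-1))$. The off-diagonal entries come from shared vertices: $\square_1$ and $\square_2$ meet only at the corner $(1,1)$, which produces the entries $(1,2)$ and $(2,1)$, their difference being the branch factor $\mu_{24}$ coming from the $\pi$-jumps of $\arg f_2,\arg f_4$; the vanishing entries $(2,3),(2,4),(3,2),(4,2)$ reflect chambers that share neither a vertex nor a facet. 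The one genuinely subtle point — and the step I expect to be the main obstacle — is the regularization at the line at infinity. Because the coupling line $f_5$ together with the line at infinity creates the triple points whose local monodromies are $\mu_{125}$ and $\mu_{345}$ (Figure \ref{P2fig}), the unbounded strips $\square_3,\square_4$ must be regularized there with the weight $1/(\mu_{125}-1)$, resp. $1/(\mu_{345}-1)$; this is precisely why the normalizing factors $(\mu_{125}-1),(\mu_{345}-1)$ are built into $\cD_3,\cD_4$, so that the entries stay finite (and the cycles stay defined) even when $\mu_{125}=1$ or $\mu_{345}=1$. Keeping correct track of the orientations and of the monodromy $\mu_{125}^{-1}$ at $t_1=\infty$ is what converts the naive factor into $\mu_1(\mu_{25}-1)/(\mu_1-1)$ and produces the precise shape of the third and fourth columns.

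Finally, with all sixteen entries in hand I would compute $\det H^\mu$ by Laplace expansion along the second column, whose only nonzero entries sit in rows $1$ and $2$; the two resulting $3\times3$ cofactors are built from the remaining product expressions. Clearing the common denominator $(\mu_1-1)^2(\mu_2-1)^2(\mu_3-1)^2(\mu_4-1)^2$ and simplifying the numerator with the relation $\mu_{ij\cdots}=\mu_i\mu_j\cdots$ collapses it to $\mu_{1234}(\mu_5-1)(\mu_{12345}-1)$. This last step is purely algebraic, and the nonvanishing under $(\ref{eq:semi-non-int})$ is then immediate, since that condition forces $\mu_5\ne1$ and $\mu_{12345}\ne1$ while $\mu_{1234}\ne0$ automatically.
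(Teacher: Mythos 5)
Your proposal is correct and takes essentially the same route as the paper, whose proof simply refers to the standard chamber-by-chamber computation of twisted intersection numbers (\S3 of Chapter VIII of \cite{Yo}) followed by a direct evaluation of the determinant; your local formulas (product for $\square_1$, simplex for $\square_2$, vertex contributions for the off-diagonal entries, and the $(\mu_{125}-1)$, $(\mu_{345}-1)$ weights coming from regularization at infinity) reproduce exactly the stated entries. The one small imprecision is that the relevant triple points at infinity are $\ell_1\cap\ell_2\cap\ell_6$ and $\ell_3\cap\ell_4\cap\ell_6$ rather than points involving $f_5$; the exponents $\mu_{125}$ and $\mu_{345}$ appear there only through the relation expressing the monodromy around the line at infinity as $\mu_{12345}^{-1}$.
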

\begin{proof}
Follow \S3 of Chapter VIII in \cite{Yo} for the computation of 
the intersection numbers. 
By a straightforward calculation, we have its determinant. 
\end{proof}

Proposition \ref{prop:int-mat} yields the following corollary. 
\begin{corollary}
\label{cor:basis}
The twisted cycles 
$\Delta_1$,$\dots$,$\Delta_4$ 
and ${\cD}_1$,$\dots$,${\cD}_4$
form a basis of $\Hl_2(T,\psi)$ and 
that of $H_2(T,\psi^{-1})$, respectively.  
\end{corollary}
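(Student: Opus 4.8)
The plan is to deduce this corollary by pure linear algebra from two facts that are already in hand. The first is that, under condition $(\ref{eq:semi-non-int})$, both $\Hl_2(T,\psi)$ and $H_2(T,\psi^{-1})$ have dimension $4$; this is precisely the rank statement obtained from the vanishing theorem of \cite{C} together with the computation $\chi(T_{x,y})=4$. The second is that the intersection matrix $H^\mu=\big(\fI(\Delta_i,\cD_j)\big)_{1\le i,j\le4}$ is non-singular under the same condition, which is exactly the determinant computation of Proposition \ref{prop:int-mat}. Since each of the two spaces is $4$-dimensional and we are exhibiting four vectors in each, it suffices to establish that the $\Delta_i$ are linearly independent and that the $\cD_j$ are linearly independent.

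For the $\Delta_i$, I would suppose a relation $\sum_{i=1}^4 a_i\Delta_i=0$ with $a_i\in\C$, pair it against each $\cD_j$ through the intersection form $\fI$, and use linearity in the first argument to obtain $\sum_{i=1}^4 a_i\,\fI(\Delta_i,\cD_j)=0$ for $j=1,\dots,4$. This says that the row vector $(a_1,\dots,a_4)$ annihilates every column of $H^\mu$, i.e.\ lies in the left kernel of $H^\mu$. Since $\det H^\mu\ne0$ by Proposition \ref{prop:int-mat}, that kernel is trivial, so all $a_i$ vanish. Being four linearly independent vectors in the $4$-dimensional space $\Hl_2(T,\psi)$, the $\Delta_i$ therefore form a basis.

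The argument for the $\cD_j$ is the transpose of this one. A relation $\sum_{j=1}^4 b_j\cD_j=0$, paired on the left against each $\Delta_i$, yields $\sum_{j=1}^4 \fI(\Delta_i,\cD_j)\,b_j=0$, so that the column vector $(b_1,\dots,b_4)^{\mathsf T}$ lies in the right kernel of $H^\mu$ and is thus zero. Hence $\cD_1,\dots,\cD_4$ are independent and form a basis of $H_2(T,\psi^{-1})$.

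There is essentially no obstacle in the corollary itself: all the substantive work has already been done, both in establishing the dimension count and in evaluating $\det H^\mu$ in Proposition \ref{prop:int-mat}. The only point worth verifying carefully is the bookkeeping of the two slots of $\fI\colon \Hl_2(T,\psi)\times H_2(T,\psi^{-1})\to\C$, namely that the $\Delta_i$ always feed the first argument and the $\cD_j$ the second; once the two index positions are matched to the rows and columns of $H^\mu$ as above, the two independence statements are immediate consequences of the non-degeneracy of a square matrix, with the left and right kernels handling the two cases respectively.
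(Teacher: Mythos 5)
Your argument is correct and is exactly the one the paper intends: the corollary is stated as an immediate consequence of Proposition \ref{prop:int-mat}, i.e.\ the non-vanishing of $\det H^\mu$ under condition (\ref{eq:semi-non-int}) combined with the rank-$4$ statement from the vanishing theorem, and your left/right-kernel pairing argument is the standard way to spell this out. Nothing is missing.
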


We express the twisted cycles $\square_5^\psi$ and $\square_6^\psi$ 
as linear combinations of $\Delta_i$.

\begin{lemma}
\label{lem:squares}
We have 
\begin{eqnarray*}
\square_5^\psi&=&-\frac{\mu_4 \mu_5-1}{\mu_5-1}\Delta_1
-\frac{\mu_{345} -1}{\mu_5-1}\Delta_4,\\
\square_6^\psi&=&-\frac{\mu_2 \mu_5-1}{\mu_5-1}\Delta_1
-\frac{\mu_{125}-1}{\mu_5-1 }\Delta_3.
\end{eqnarray*}

\end{lemma}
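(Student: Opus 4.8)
The goal is to express the locally finite twisted cycles $\square_5^\psi$ and $\square_6^\psi$ in terms of the basis $\Delta_1,\dots,\Delta_4$ of $\Hl_2(T,\psi)$ established in Corollary \ref{cor:basis}. The plan is to exploit the non-degeneracy of the intersection form $\fI$ together with the explicit intersection matrix $H^\mu$ computed in Proposition \ref{prop:int-mat}. Writing $\square_5^\psi = \sum_{i=1}^4 c_i \Delta_i$ for unknown scalars $c_i\in\C$, we may recover the coefficients by pairing both sides against the dual cycles $\cD_1,\dots,\cD_4$: since $\big(\fI(\Delta_i,\cD_j)\big)$ is the invertible matrix $H^\mu$, the vector $(c_1,\dots,c_4)$ is obtained by solving the linear system $H^\mu\,(c_1,\dots,c_4)^{\mathsf T} = \big(\fI(\square_5^\psi,\cD_j)\big)_j$. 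Thus the computation reduces to evaluating the four intersection numbers $\fI(\square_5^\psi,\cD_j)$ for $j=1,\dots,4$, and likewise for $\square_6^\psi$.

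First I would read off from Table \ref{tab:arg} that $\square_5$ differs from $\square_1$ only in the assignment $\arg(f_4)=\pi$, so geometrically $\square_5$ is supported on a region adjacent to $\square_1$ across the line $f_4 = 1-t_2=0$; its interaction with the basis cycles is therefore localized near $t_2=1$ and near the line at infinity $f_5$. Analogously $\square_6$ differs from $\square_1$ only at $\arg(f_2)=\pi$, so it is localized near $t_1=1$. This product-of-one-dimensional-cycles structure means each two-dimensional intersection number factors as a product of one-dimensional twisted intersection numbers, each of which is a standard elementary expression in the local monodromy eigenvalues $\mu_j$ (of the form $1$, $\mu_j/(\mu_j-1)$, or $1/(\mu_j-1)$), computed exactly as in \S3 of Chapter VIII of \cite{Yo}, which is the reference already invoked in the proof of Proposition \ref{prop:int-mat}.

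Concretely I would compute, cycle by cycle, the numbers $\fI(\square_5^\psi,\cD_j)$ and $\fI(\square_6^\psi,\cD_j)$ using the same local rules that produced the entries of $H^\mu$. Most of these pairings vanish for support reasons: $\square_5$ meets only those $\cD_j$ whose support touches the relevant boundary strata, so the resulting linear system is sparse and the claimed two-term answers
\begin{eqnarray*}
\square_5^\psi&=&-\frac{\mu_4 \mu_5-1}{\mu_5-1}\Delta_1
-\frac{\mu_{345} -1}{\mu_5-1}\Delta_4,\\
\square_6^\psi&=&-\frac{\mu_2 \mu_5-1}{\mu_5-1}\Delta_1
-\frac{\mu_{125}-1}{\mu_5-1}\Delta_3
\end{eqnarray*}
drop out after inverting the relevant $2\times 2$ or $4\times 4$ block. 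As a cross-check one can verify directly that pairing the proposed right-hand sides against each $\cD_j$ via $H^\mu$ reproduces the independently computed $\fI(\square_5^\psi,\cD_j)$, which uniquely pins down the coefficients by non-degeneracy of $\fI$.

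The main obstacle I anticipate is bookkeeping rather than conceptual: one must fix all the branch choices and orientations from Table \ref{tab:arg} consistently and track the regularization factors built into $\cD_3=(\mu_{125}-1)\mathrm{reg}(\square_3^{\psi^{-1}})$ and $\cD_4=(\mu_{345}-1)\mathrm{reg}(\square_4^{\psi^{-1}})$, since an error in a single local intersection sign or in the placement of a $\mu_j$ versus $\mu_j^{-1}$ factor will corrupt the final coefficients. Carefully localizing each pairing to the boundary stratum where the two supports actually meet, and reusing the already-validated entries of $H^\mu$ as a template for the elementary factors, is what keeps this computation controlled.
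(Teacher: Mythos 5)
Your proposal follows essentially the same route as the paper: the authors likewise write $\square_5^\psi=\sum_i\g_i\Delta_i$, compute the four intersection numbers against $\cD_1,\dots,\cD_4$ (obtaining, e.g., a vanishing pairing with $\cD_4$ for $\square_5^\psi$ and with $\cD_3$ for $\square_6^\psi$, exactly the sparsity you predict), and solve the resulting linear system using the invertibility of $H^\mu$ from Proposition \ref{prop:int-mat}. The only slip is that your linear system should read $(c_1,\dots,c_4)H^\mu=\big(\fI(\square_5^\psi,\cD_j)\big)_j$ as a row-vector equation rather than $H^\mu(c_1,\dots,c_4)^{\mathsf T}=\cdots$, since $H^\mu_{ij}=\fI(\Delta_i,\cD_j)$ is not symmetric; your own cross-check would catch this.
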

\begin{proof}
Set 
$$\square_5^\psi=\sum_{i=1}^4 \g_i\Delta_i=(\g_1,\dots,\g_4)
\;^t(\Delta_1,\dots,\Delta_4),$$
and compute the intersection numbers 
$\fI(\square_5^\psi,{\cD}_i)$. 
Then we have
\begin{eqnarray*}
& &(\g_1,\g_2,\g_3,\g_4)H^\mu=(\fI(\square_5^\psi,{\cD}_1),\dots,
\fI(\square_5^\psi,{\cD}_4))\\
&=&\Big(\frac{-\mu_4(\mu_1\mu_2-1)}{(\mu_1-1)(\mu_2-1)(\mu_4-1)}, 
\frac{-(\mu_4\mu_5-1)}{(\mu_2-1)(\mu_4-1)(\mu_5-1)},
\frac{\mu_1\mu_4(\mu_{125}-1)}{(\mu_1-1)(\mu_4-1)},
0\Big),
\end{eqnarray*}
which yields the expression of $\square_5^\psi$.

Since
\begin{eqnarray*}
& &(\fI(\square_6^\psi,{\cD}_1),\dots,
\fI(\square_6^\psi,{\cD}_4))\\
&=&\Big(\frac{-\mu_2(\mu_3\mu_4-1)}{(\mu_2-1)(\mu_3-1)(\mu_4-1)}, 
\frac{-(\mu_2\mu_5-1)}{(\mu_2-1)(\mu_4-1)(\mu_5-1)},0,
\frac{\mu_2\mu_3(\mu_{345}-1)}{(\mu_2-1)(\mu_3-1)}\Big),
\end{eqnarray*}
we have the expression of $\square_6^\psi$. 
\end{proof}
\begin{remark}
In \cite{MY}, we take a basis $\Delta_1$, $\Delta_2$, 
$\square_5^\psi$ and $\square_6^\psi$ of $\Hl_2(T,\psi)$. 
If $a-c,a-c'\in \Z$ then each of $\square_5^\psi$ and $\square_6^\psi$ is 
a scalar multiple of $\Delta_1$ by Lemma \ref{lem:squares}. 
\end{remark}

Similar to Lemma \ref{lem:squares}, we have the following.
\begin{lemma}
\label{lem:dual-squares}
We have 
\begin{eqnarray*}
\mathrm{reg}(\square_5^{\psi^{-1}})
&=&-\frac{\mu_4 \mu_5-1}{\mu_4(\mu_5-1)}{\cD}_1
-\frac{1}{\mu_3\mu_4(\mu_5-1)}{\cD}_4,\\
\mathrm{reg}(\square_6^{\psi^{-1}})
&=&-\frac{\mu_2 \mu_5-1}{\mu_2(\mu_5-1)}{\cD}_1
-\frac{1}{\mu_1\mu_2(\mu_5-1)}{\cD}_3.
\end{eqnarray*}
\end{lemma}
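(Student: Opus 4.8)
The plan is to avoid recomputing any intersection numbers and instead transfer Lemma \ref{lem:squares} across the symmetry $\psi\leftrightarrow\psi^{-1}$. The underlying locally finite chains $\square_1,\dots,\square_6$ of Figure \ref{fig:basis} do not depend on the local system; only the loading and the monodromy factors picked up around the loci $f_j=0$ change when $\psi$ is replaced by $\psi^{-1}$, and this change amounts precisely to $\mu_j\mapsto\mu_j^{-1}$. Hence the topological relations established in Lemma \ref{lem:squares} hold verbatim in $\Hl_2(T,\psi^{-1})$ after this substitution:
\begin{align*}
\square_5^{\psi^{-1}}&=-\frac{\mu_4^{-1}\mu_5^{-1}-1}{\mu_5^{-1}-1}\,\square_1^{\psi^{-1}}
-\frac{\mu_{345}^{-1}-1}{\mu_5^{-1}-1}\,\square_4^{\psi^{-1}},\\
\square_6^{\psi^{-1}}&=-\frac{\mu_2^{-1}\mu_5^{-1}-1}{\mu_5^{-1}-1}\,\square_1^{\psi^{-1}}
-\frac{\mu_{125}^{-1}-1}{\mu_5^{-1}-1}\,\square_3^{\psi^{-1}}.
\end{align*}

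First I would apply the (linear) regularization map to both identities and substitute the definitions $\cD_1=\mathrm{reg}(\square_1^{\psi^{-1}})$, $\mathrm{reg}(\square_3^{\psi^{-1}})=\cD_3/(\mu_{125}-1)$ and $\mathrm{reg}(\square_4^{\psi^{-1}})=\cD_4/(\mu_{345}-1)$. For the first identity this gives
$$\mathrm{reg}(\square_5^{\psi^{-1}})=-\frac{\mu_4^{-1}\mu_5^{-1}-1}{\mu_5^{-1}-1}\,\cD_1
-\frac{\mu_{345}^{-1}-1}{(\mu_5^{-1}-1)(\mu_{345}-1)}\,\cD_4,$$
and similarly for $\square_6^{\psi^{-1}}$ with $\cD_3$ in place of $\cD_4$. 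The last step is a routine simplification: multiplying numerator and denominator of the $\cD_1$-coefficient by $\mu_4\mu_5$ turns it into $-\frac{\mu_4\mu_5-1}{\mu_4(\mu_5-1)}$, while the identities $\mu_{345}^{-1}-1=-(\mu_{345}-1)/\mu_{345}$ and $\mu_5^{-1}-1=-(\mu_5-1)/\mu_5$ collapse the $\cD_4$-coefficient to $-\frac{1}{\mu_3\mu_4(\mu_5-1)}$; the same two manipulations handle the $\square_6$ relation. Note that the resulting coefficients are regular at $\mu_{125}=1$ and $\mu_{345}=1$, which is exactly why the $(\mu_{125}-1)$ and $(\mu_{345}-1)$ normalizations were built into $\cD_3,\cD_4$; so although the computation is carried out generically in the $\mu_j$, the final identity persists under specialization.

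The step I expect to be the main obstacle is justifying the transfer itself: I must check that the branch assignments of Table \ref{tab:arg} and the orientation conventions used to define the cycles $\square_i^{\psi^{-1}}$ and their regularizations $\cD_i$ are the ones for which the coefficients of Lemma \ref{lem:squares} are obtained by the clean substitution $\mu_j\mapsto\mu_j^{-1}$, and that regularization commutes with the linear relation (which is automatic where $\imath$ is bijective, i.e. $\mu_{125},\mu_{345}\neq1$). If one prefers to stay strictly parallel to Lemma \ref{lem:squares}, the fallback is to write $\mathrm{reg}(\square_5^{\psi^{-1}})=\sum_{i=1}^4 c_i\cD_i$, compute the four numbers $\fI(\Delta_j,\mathrm{reg}(\square_5^{\psi^{-1}}))$ by the recipe of \S3 of Chapter VIII in \cite{Yo} used for Proposition \ref{prop:int-mat}, and solve $H^\mu\,{}^t(c_1,\dots,c_4)={}^t\big(\fI(\Delta_j,\mathrm{reg}(\square_5^{\psi^{-1}}))\big)_j$ via the invertibility of $H^\mu$. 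In that route the obstacle shifts to the careful branch- and orientation-tracking needed to get the right-hand intersection numbers, but the linear solve is then immediate.
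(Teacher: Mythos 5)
Your proposal is correct, but it takes a different route from the paper's. The paper disposes of this lemma with ``Similar to Lemma \ref{lem:squares}'', i.e.\ its intended proof is exactly your fallback: expand $\mathrm{reg}(\square_5^{\psi^{-1}})=\sum_i c_i\cD_i$, compute the four intersection numbers $\fI(\Delta_j,\mathrm{reg}(\square_5^{\psi^{-1}}))$ by the recipe of \cite{Yo}, and invert $H^\mu$. Your primary route instead transfers Lemma \ref{lem:squares} through the involution $\psi\leftrightarrow\psi^{-1}$. That transfer is legitimate: the chains of Figure \ref{fig:basis} and the branch assignments of Table \ref{tab:arg} are fixed once and for all, the twisted homology $\Hl_2(T,\psi)$ depends on $\psi$ only through the local system it generates, and every boundary relation among loaded chambers has coefficients that are monomials in the $\mu_j^{\pm1}$ read off from that branch bookkeeping; inverting the local system inverts each $\mu_j$, so the identity of Lemma \ref{lem:squares} holds verbatim in $\Hl_2(T,\psi^{-1})$ after $\mu_j\mapsto\mu_j^{-1}$. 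Your subsequent algebra is right: the $\cD_1$-coefficient simplifies to $-\frac{\mu_4\mu_5-1}{\mu_4(\mu_5-1)}$, and since $\mu_{345}^{-1}-1=-(\mu_{345}-1)/\mu_{345}$ and $\mu_5^{-1}-1=-(\mu_5-1)/\mu_5$, the $\cD_4$-coefficient collapses to $-\frac{1}{\mu_3\mu_4(\mu_5-1)}$ because $\mu_{345}=\mu_3\mu_4\mu_5$; the $\square_6$ case is identical with $(\mu_4,\mu_{345},\cD_4)$ replaced by $(\mu_2,\mu_{125},\cD_3)$. You are also right to observe that the $(\mu_{125}-1)$ and $(\mu_{345}-1)$ normalizations built into $\cD_3,\cD_4$ are what make the final coefficients regular at $\mu_{125}=1$ or $\mu_{345}=1$, so the generic computation (where $\imath$ is bijective and $\mathrm{reg}$ is literally the inverse of $\imath$) extends by continuity. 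The trade-off: your route buys a computation-free proof at the cost of the one structural claim about the $\mu_j\mapsto\mu_j^{-1}$ symmetry, whereas the paper's route is self-contained but requires a second round of careful branch- and orientation-tracking for the intersection numbers.
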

\subsection{Circuit transformations}
\subsubsection{Generators of the fundamental group}
We give generators of the fundamental group $\pi_1(X,(\e,\e))$.
Let $\rho_1$ (and $\rho_3$, $\rho_5$) be a loop in 
$$L_x=\{(x,\e)\in \C^2\mid x\ne 0,1-\e,1\}$$
starting from $(x,y)=(\e,\e)$, 
approaching to the point $(x,y)=(0,\e)$,(and $(x,y)=(1-\e,\e)$, $(1,\e)$)
with $\mathrm{Im}(x)>0$, turning once around the point positively, 
and tracing back to $(\e,\e)$.
Let $\rho_2$ (and $\rho_4$) be a loop in 
$$L_y=\{(\e,y)\in \C^2\mid y\ne 0,1-\e,1\}$$
starting from $(x,y)=(\e,\e)$, approaching to the point $(x,y)=(\e,0)$,
(and $(x,y)=(\e,1)$) with $\mathrm{Im}(y)>0$, 
turning once around the point positively, 
and tracing back to $(\e,\e)$.
It is known that the loops $\rho_1,\dots,\rho_5$ generate $\pi_1(X,(\e,\e))$.
The circuit matrices of $\CM^\mu$ and $\cCM^\mu$ along $\rho_i$ will be denoted as
$$\CM^\mu_i=\CM^\mu_{\rho_i},\quad \cCM^{-\mu}_i=\cCM^{-\mu}_{\rho_i}
\quad (i=1,\dots,5).$$

\subsubsection{Expressions of the circuit transformations}
\begin{theorem}
\label{th:monod-rep}
{Under the assumption (3.1), the circuit transformations $\CM_i^\mu\in 
{\rm GL}(\Hl_2(T,\psi))$ $(i=1,\dots,5)$ are given as}
\begin{eqnarray}
\label{eq:monod1}
\CM_1^\mu(\De)&=&\mu_{12}^{-1}\De
-(\mu_{12}^{-1}-1)
\Big(\fI(\De,\cD_1),\fI(\De,\cD_4)\Big)
{H^\mu_{14}}^{-1} 
\begin{pmatrix}
\De_1\\
\De_4
\end{pmatrix},\\
\nonumber
\CM_2^\mu(\De)&=&\mu_{34}^{-1}\De
-(\mu_{34}^{-1}-1)
\Big(\fI(\De,\cD_1),\fI(\De,\cD_3)\Big)
{H^\mu_{13}}^{-1} 
\begin{pmatrix}
\De_1\\
\De_3
\end{pmatrix},
\end{eqnarray}
\begin{eqnarray*}
\CM_3^\mu(\De)&=&\De
-\frac{1-\mu_{245}}
{\fI(\De_2,\cD_2)}
\fI(\De,\cD_2)\De_2,\\
\CM_4^\mu(\De)&=&\De
-\frac{1-\mu_{145}}{\fI(\De_{145},\cD_{145})}
\fI(\De,\cD_{145})\De_{145}
,\\
\CM_5^\mu(\De)&=&\De
-\frac{1-\mu_{235}}{\fI(\De_{235},\cD_{235})}
\fI(\De,\cD_{235})\De_{235},
\end{eqnarray*}
where $\De$ is any element of  $\Hl_2(T,\psi)$,  
$H^\mu_{1j}$ $(j=3,4)$ is the submatrix of $H^\mu$ consisting of the 
$(1,1)$, $(1,j)$, $(j,1)$ and $(j,j)$ entries of $H^\mu$, and 
$$\De_{145}=\De_{2}+\square_5^\psi,\quad 
\De_{235}=\De_{2}+\square_6^\psi,$$
$$\cD_{145}=\cD_{2}+\mathrm{reg}(\square_5^{\psi^{-1}}),\quad 
\cD_{235}=\cD_{2}+\mathrm{reg}(\square_6^{\psi^{-1}}).$$
\end{theorem}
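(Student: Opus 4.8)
The plan is to pin down each circuit transformation from its eigenvalue data together with its fixed and vanishing subspaces, all of which are controlled by the intersection form $\fI$ and by the expansions of $\square_5^\psi,\square_6^\psi$ in Lemma \ref{lem:squares}. First I would record the local eigenvalues from the indicial behaviour of $E_2$. Along $x=0$ the exponents of $E_2$ in $x$ are $\{0,0,1-c,1-c\}$ and along $y=0$ the exponents in $y$ are $\{0,0,1-c',1-c'\}$; since $\mu_{12}=e^{2\pi\sqrt{-1}c}$ and $\mu_{34}=e^{2\pi\sqrt{-1}c'}$, the transformation $\CM_1^\mu$ has eigenvalues $1,1,\mu_{12}^{-1},\mu_{12}^{-1}$ and $\CM_2^\mu$ has eigenvalues $1,1,\mu_{34}^{-1},\mu_{34}^{-1}$. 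Along each of the three remaining components $x+y=1$, $y=1$, $x=1$ the line $\ell_5$ passes through a single vertex of $\ell_1,\dots,\ell_4$ (namely $(1,1)$, $(0,1)$, $(1,0)$ in the $(t_1,t_2)$-plane), so the monodromy is a pseudo-reflection, with eigenvalues $1,1,1$ and one non-trivial eigenvalue equal to $\mu_{245}$, $\mu_{145}$, $\mu_{235}$ respectively.

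For the three pseudo-reflections I would use the twisted Picard--Lefschetz formula: a circuit around a component along which a single cycle $v\in\Hl_2(T,\psi)$ vanishes, with non-trivial eigenvalue $\lambda$, acts by
$$\CM(\De)=\De+(\lambda-1)\,\frac{\fI(\De,\cD_v)}{\fI(v,\cD_v)}\,v,$$
where $\cD_v\in H_2(T,\psi^{-1})$ is the dual vanishing cycle; this map sends $v\mapsto\lambda v$ and fixes the hyperplane $\{\De:\fI(\De,\cD_v)=0\}$, matching the eigenvalue data. The remaining content is the identification of $v$. Tracing how the chambers of Figure \ref{fig:basis} collapse as $(x,y)$ circles each component, the cycle pinched when $\ell_5$ sweeps across $(1,1)$ is $\square_2^\psi=\De_2$, that pinched across $(0,1)$ is the combination $\De_{145}=\De_2+\square_5^\psi$, and that pinched across $(1,0)$ is $\De_{235}=\De_2+\square_6^\psi$; the corresponding dual cycles are $\cD_2$, $\cD_{145}$, $\cD_{235}$. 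Substituting into the formula above gives the stated expressions for $\CM_3^\mu$, $\CM_4^\mu$, $\CM_5^\mu$.

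For $\CM_1^\mu$ and $\CM_2^\mu$ the circuit is not a single reflection: as $x\to 0$ the line $\ell_5$ runs off to infinity and the eigenvalue $\mu_{12}^{-1}$ occurs with multiplicity two, so the transformation is determined by its fixed $2$-plane together with the complementary eigenspace. A chamber analysis shows that $\De_1$ and $\De_4$ are left unchanged by $\rho_1$, so the fixed plane is $V_{14}=\langle\De_1,\De_4\rangle$; by Lemma \ref{lem:duality} the complementary $\mu_{12}^{-1}$-eigenspace is then the annihilator $\{\De:\fI(\De,\cD_1)=\fI(\De,\cD_4)=0\}$, and the case $\rho_2$ is the analogue with $V_{13}=\langle\De_1,\De_3\rangle$. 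Under assumption $(\ref{eq:semi-non-int})$ the submatrix $H^\mu_{14}$ is invertible (as one checks from the entries in Proposition \ref{prop:int-mat}), so
$$\De\longmapsto\big(\fI(\De,\cD_1),\fI(\De,\cD_4)\big){H^\mu_{14}}^{-1}\begin{pmatrix}\De_1\\\De_4\end{pmatrix}$$
is exactly the projection onto $V_{14}$ with kernel that annihilator. A one-line check then shows the stated formula restricts to the identity on $V_{14}$ and to multiplication by $\mu_{12}^{-1}$ on the annihilator, hence coincides with $\CM_1^\mu$; the formula for $\CM_2^\mu$ follows identically, with $\De_3,\cD_3,H^\mu_{13},\mu_{34}^{-1}$ in place of $\De_4,\cD_4,H^\mu_{14},\mu_{12}^{-1}$.

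Finally I would confirm the normalisations by testing compatibility with $\fI$ through Lemma \ref{lem:duality}(1) and by recovering the intersection matrix $H^\mu$ of Proposition \ref{prop:int-mat} on the basis $\De_1,\dots,\De_4$. The main obstacle lies in the second and third steps: showing geometrically that the cycle pinched near $(0,1)$ and $(1,0)$ is the combination $\De_2+\square_5^\psi$ and $\De_2+\square_6^\psi$ rather than a single chamber, and carefully tracking the non-compact ends of the locally finite chains --- and their regularisations --- as $\ell_5$ escapes to infinity in the loops $\rho_1,\rho_2$. Fixing the coefficients of these combinations exactly, so that the resulting maps reproduce $H^\mu$, is the delicate heart of the argument; the branch data of Table \ref{tab:arg} and the expansions of Lemma \ref{lem:squares} are what make this tractable.
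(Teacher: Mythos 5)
Your overall strategy --- determining each $\CM_i^\mu$ from its eigenvalues, its fixed subspace read off from chambers invariant under the deformation, and the complementary eigenspace obtained via Lemma \ref{lem:duality}(2), with the vanishing cycles $\De_2$, $\De_{145}=\De_2+\square_5^\psi$, $\De_{235}=\De_2+\square_6^\psi$ attached to the loops $\rho_3,\rho_4,\rho_5$ --- is exactly the paper's. There is, however, one concrete gap. You assert that $H^\mu_{14}$ is invertible under assumption (\ref{eq:semi-non-int}); it is not. That assumption does not exclude $c\in\Z$, i.e.\ $\mu_{12}=1$, and every entry of $H^\mu_{14}$ (read off from Proposition \ref{prop:int-mat}) carries a factor $\mu_{12}-1$, so $H^\mu_{14}$ vanishes identically there. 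The same degeneration undermines the whole eigenspace argument at these parameters: when $\mu_{12}=1$ the two eigenvalues $1$ and $\mu_{12}^{-1}$ coincide, $\CM_1^\mu$ need not be semisimple, and $\langle\De_1,\De_4\rangle$ and $\langle\cD_1,\cD_4\rangle^\perp$ need not be complementary; likewise for $\CM_2^\mu$ at $\mu_{34}=1$, and for $\CM_j^\mu$ $(j=3,4,5)$ at $\mu_{245}=1$, $\mu_{145}=1$, $\mu_{235}=1$, where $\fI(\De_2,\cD_2)$, $\fI(\De_{145},\cD_{145})$, $\fI(\De_{235},\cD_{235})$ vanish and your Picard--Lefschetz coefficient becomes $0/0$.

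The paper closes this gap by a continuity argument that your proposal needs to import. One first proves the formulas under the temporary extra hypotheses $\mu_{12},\mu_{34}\ne1$ (resp.\ $\mu_{245},\mu_{145},\mu_{235}\ne1$), exactly as you do. Then, since $\De_1,\dots,\De_4$ remain a basis for all parameters satisfying (\ref{eq:semi-non-int}) by Proposition \ref{prop:int-mat}, the true circuit matrices depend continuously on the parameters; and the right-hand sides of the asserted formulas are also continuous, because the simple pole of ${H^\mu_{14}}^{-1}$ along $\mu_{12}=1$ is cancelled by the prefactor $\mu_{12}^{-1}-1$, and because
$$\frac{1-\mu_{235}}{\fI(\De_{235},\cD_{235})}=(\mu_2-1)(\mu_3-1)(\mu_5-1)$$
(and its analogues for $\rho_3,\rho_4$) is polynomial in the $\mu_i$. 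Hence the identities, valid on a dense open set of parameters, extend to all of (\ref{eq:semi-non-int}). With that addition your argument coincides with the paper's proof; the remaining points you flag as delicate (identifying the pinched combinations $\De_2+\square_5^\psi$ and $\De_2+\square_6^\psi$ via the branch data of Table \ref{tab:arg}) are treated in the paper exactly as you describe.
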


\begin{proof}
Under the condition $c,c'\notin \Z$,  the linear transformation $\CM_1^\mu$ 
satisfies the assumption of Lemma \ref{lem:duality} (2). In fact, 
a fundamental system of solutions can be given 
by the hypergeometric series $F_2$ multiplied by  
the power functions 
$$1,\quad  x^{1-c},\quad  y^{1-c'},\quad x^{1-c}y^{1-c'}.$$
Thus the eigenvalues of $\CM_1^\mu$ are $1$ and 
$e^{-2\pi\sqrt{-1}c}=\mu_{12}^{-1}$, and each of the eigenspaces 
is two dimensional. It is easy to see that the locally finite chains 
$\square_1$ and $\square_4$ are invariant under the deformation along $\rho_1$.
Hence the twisted cycles $\De_1$ and $\De_4$ span the 
eigenspace of $\CM_1^\mu$ of eigenvalue $1$. 
Similarly we can show that the twisted cycles $\cD_1$ and $\cD_4$ span the 
eigenspace of $\cCM_1^{-\mu}$ of eigenvalue $1$. 
By Lemma \ref{lem:duality} (2),  
the eigenspace of $\CM_1^\mu$ of eigenvalue $\mu_{12}^{-1}$ is 
$$\langle\cD_1,\cD_4\rangle^\perp=\{\De\in \Hl_2(T,\psi)\mid 
\fI(\De,\cD_1)=\fI(\De,\cD_4)=0\}.$$
It is easy to see that the right hand side $\;`\!{\CM_1^\mu}$ of 
(\ref{eq:monod1}) satisfies
$$\;`\!{\CM_1^\mu}(\Delta_j)=\Delta_j,\quad j=1,4,\qquad
\;`\!{\CM_1^\mu}(\Delta)=\,\mu_{12}^{-1}\Delta,\quad \Delta\in\langle\cD_1,\cD_4\rangle^\perp.$$
By Proposition \ref{prop:int-mat}, $\De_1,\dots,\De_4$ form a basis 
even in the case $c\in \Z$ or $c'\in \Z$. Thus the representation matrix 
$M_1^\mu$ of $\CM_1^\mu$ with respect to this basis is continuous on the 
parameters $a,b,b',c,c'$. 
On the other hand, the expression $\;\!{\CM_1^\mu}$ is also continuous 
since the factor $\mu_{12}-1$ in the denominator of 
$${H^\mu_{14}}^{-1}=\frac{(\mu_1-1)(\mu_2-1)}{\mu_{12}-1}\begin{pmatrix}
\dfrac{(\mu_4-1)(\mu_{45}-1)}{\mu_4(\mu_5-1)}
&
\dfrac{(\mu_4-1)(\mu_{345}-1)}
{\mu_4(\mu_5-1)}
\\[4mm]
\dfrac{\mu_4-1}{\mu_{34}(\mu_5-1)}
&
\dfrac{\mu_{34}-1}{\mu_{34}(\mu_5-1)}
\end{pmatrix}.
$$
cancels by $\mu_{12}^{-1}-1$.
Similarly we have the expression of $\CM_2^\mu$.

To study $\CM_5^\mu$, we work temporarily under the condition 
$\mu_{235}\not=1$. 
We decompose $\rho_5$ into 
$\bar\rho_5\cdot \rho_5^\circ\cdot \bar\rho_5^{-1}$, where 
$\bar\rho_5$ is the approach to $x=1$ and $\rho_5^\circ$ is the 
turning path.
We trace the deformation of the triangle $\square_{235}$ made by 
$\square_2$ and $\square_6$ along $\bar\rho_5$. After the deformation, 
this becomes a small triangle near the point $(t_1,t_2)=(1,0)$. 
We see the argument of $f_4=1-t_2$ on this triangle. 
Since $y=\e$ and $\im(x)>0$ in $\bar\rho_5$, $t_1>1$ in this triangle, and 
$$1-t_2=(\e-1)t_2+xt_1$$
on the line $\ell_5:f_5=0$, 
$f_4=1-t_2$ varies negative to positive via the upper-half space, 
i.e, $\arg(f_4)$ decreases by $\pi$ along $\bar\rho_5$.
Note that this change is compatible with our assignment 
of $\arg(f_4)$ on $\square_2$ and $\square_6$ in Table \ref{tab:arg}.   
Thus the twisted cycle $\Delta_{235}=\De_2+\square_6^{\psi}$ 
plays the role of a vanishing cycle as the line $\ell_5$ approaches 
the point $(1,0)$. 
Since $\rho_5^\circ$ corresponds to the move of $f_5$ turning around 
the point $(1,0),$
the cycle $\Delta_{235}$ is an eigenvector of 
$\CM_5^\mu$ of eigenvalue $\mu_{235}$.
We can similarly show that $\cD_{235}$ 
is an eigenvector of $\cCM_5^{-\mu}$ of eigenvalue $\mu_{235}^{-1}$. 
On the other hand, we can find three chambers not affected by the move of the line
$\ell_5$ along $\rho_5^\circ$. For example,  $\square_3$,  
$\{(t_1,t_2)\in \R_2\mid t_1<0,t_2<0\}$ and
$\{(t_1,t_2)\in \R_2\mid t_1>1,t_2>1\}$. 
Hence $\CM_5^\mu$ has three dimensional eigenspace of eigenvalue $1$.
Lemma \ref{lem:duality} (2) yields that this eigenspace is expressed as 
$$\langle\cD_{235} \rangle^\perp=\{\De\in \Hl_2(T,\psi)\mid 
\fI(\De,\cD_{235})=0\}.$$
So $\CM_5^\mu$ has desired eigenvalues and eigenspaces.
Since the factor 
$$
\frac{1-\mu_{235}}{\fI(\De_{235},\cD_{235})}
=(1-\mu_{235})\Big/\left(\frac{\mu_{235}-1}{(\mu_2-1)(\mu_3-1)(\mu_5-1)}\right)
=(\mu_2-1)(\mu_3-1)(\mu_5-1)
$$ 
is continuous on $\mu_{235}$ at $1$, 
the expression of $\CM_5^\mu$ is valid even in the case $\mu_{235}=1$. 
Similarly we have the expressions of $\CM_3^\mu$ and $\CM_4^\mu$.
\end{proof}

\begin{corollary}
\label{cor:monod-rep-dual}
Under the assumption (3.1), 
the circuit transformations $\cCM_i^{-\mu}\in 
{\rm GL}(H_2(T,\psi^{-1}))$ $(i=1,\dots,5)$ are given as
\begin{eqnarray*}
\cCM_1^{-\mu}(\cD)&=&\mu_{12}\cD
-(\mu_{12}-1)
\Big(\cD_1,\cD_4\Big)
\begin{pmatrix}
\fI(\De_1,\cD)
\\
\fI(\De_4,\cD)
\end{pmatrix}
{H^\mu_{14}}^{-1} ,\\
\cCM_2^{-\mu}(\cD)
&=&\mu_{34}\cD
-(\mu_{34}-1)
\Big(\cD_1,\cD_3\Big)
\begin{pmatrix}
\fI(\De_1,\cD)
\\
\fI(\De_3,\cD)
\end{pmatrix}
{H^\mu_{13}}^{-1},
\end{eqnarray*} 
\begin{eqnarray*}
\cCM_3^{-\mu}(\cD)&=&\cD
-\frac{1-\mu_{245}^{-1}}
{\fI(\De_2,\cD_2)}
\fI(\De_2,\cD)\cD_2,\\
\cCM_4^{-\mu}(\cD)&=&\cD
-\frac{1-\mu_{145}^{-1}}
{\fI(\De_{145},\cD_{145})}
\fI(\De_{145},\cD)\cD_{145},\\
\cCM_5^{-\mu}(\cD)&=&\cD
-\frac{1-\mu_{235}^{-1}}
{\fI(\De_{235},\cD_{235})}
\fI(\De_{235},\cD)\cD_{235},\\
\end{eqnarray*}
where $\cD$ is any element of $H_2(T,\psi^{-1})$.
\end{corollary}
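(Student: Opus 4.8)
The plan is to derive each $\cCM_i^{-\mu}$ from the already-established expressions for $\CM_i^\mu$ in Theorem \ref{th:monod-rep} by invoking the duality of Lemma \ref{lem:duality}(1), namely $\fI(\CM_i^\mu(\De),\cCM_i^{-\mu}(\cD))=\fI(\De,\cD)$ for all $\De\in\Hl_2(T,\psi)$ and $\cD\in H_2(T,\psi^{-1})$. Since $\fI$ is non-degenerate (Proposition \ref{prop:int-mat}) and $\CM_i^\mu$ is invertible, this relation determines $\cCM_i^{-\mu}$ uniquely: writing $M_i$ and $\widetilde M_i$ for the matrices of $\CM_i^\mu$ and $\cCM_i^{-\mu}$ with respect to the bases $\De_1,\dots,\De_4$ and $\cD_1,\dots,\cD_4$, the relation reads ${}^tM_i\,H^\mu\,\widetilde M_i=H^\mu$, so $\widetilde M_i=(H^\mu)^{-1}({}^tM_i)^{-1}H^\mu$. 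Hence it suffices to check that the operators displayed in Corollary \ref{cor:monod-rep-dual} satisfy $\fI(\CM_i^\mu(\De),\cCM_i^{-\mu}(\cD))=\fI(\De,\cD)$; non-degeneracy then forces them to be the genuine dual circuit transformations.

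For $i=3,4,5$ both operators are rank-one (Picard--Lefschetz) perturbations of the respective identities, and the verification is a one-line computation. Writing, say for $i=5$, $\lambda=\mu_{235}$ and $c=\fI(\De_{235},\cD_{235})$, and expanding
$$\fI\!\Big(\De-\tfrac{1-\lambda}{c}\fI(\De,\cD_{235})\De_{235},\ \cD-\tfrac{1-\lambda^{-1}}{c}\fI(\De_{235},\cD)\cD_{235}\Big),$$
every term except $\fI(\De,\cD)$ collects into $\fI(\De,\cD_{235})\fI(\De_{235},\cD)$ times $\big[-\tfrac{1-\lambda}{c}-\tfrac{1-\lambda^{-1}}{c}+\tfrac{(1-\lambda)(1-\lambda^{-1})}{c}\big]$, which vanishes by the elementary identity $(1-\lambda)+(1-\lambda^{-1})=(1-\lambda)(1-\lambda^{-1})$. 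The same computation with $\lambda=\mu_{245},\mu_{145}$ and the cycles $\De_2,\cD_2$ resp. $\De_{145},\cD_{145}$ settles $i=3,4$. Exactly as in the proof of Theorem \ref{th:monod-rep}, the prefactor $\tfrac{1-\lambda^{-1}}{\fI(\cdot,\cdot)}$ stays finite as $\lambda\to1$, so the expressions remain valid in the reducible specialization permitted by $(\ref{eq:semi-non-int})$.

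For the rank-two cases $i=1,2$ the cleanest route is the eigenspace argument. From the proof of Theorem \ref{th:monod-rep}, $\De_1,\De_4$ span the eigenvalue-$1$ space of $\CM_1^\mu$ and its $\mu_{12}^{-1}$-eigenspace is $\langle\cD_1,\cD_4\rangle^\perp$; dually $\cD_1,\cD_4$ span the eigenvalue-$1$ space of $\cCM_1^{-\mu}$, and by Lemma \ref{lem:duality}(2) its $\mu_{12}$-eigenspace is $\langle\De_1,\De_4\rangle^\perp$. I will check directly that the proposed $\cCM_1^{-\mu}$ fixes $\cD_1,\cD_4$ and multiplies every $\cD\in\langle\De_1,\De_4\rangle^\perp$ by $\mu_{12}$. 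Granting this, the duality holds eigenspace by eigenspace: by Lemma \ref{lem:duality}(2) the form $\fI$ vanishes between non-matching eigenspaces, while on matching pairs the eigenvalue products $1\cdot1$ and $\mu_{12}^{-1}\cdot\mu_{12}$ both equal $1$, so $\fI(\CM_1^\mu\De,\cCM_1^{-\mu}\cD)=\fI(\De,\cD)$. The case $i=2$ is identical with the index pair $(1,3)$ and eigenvalue $\mu_{34}$.

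I expect the third paragraph to be the main obstacle: one must confirm that the displayed rank-two correction, built from the block $H^\mu_{14}$, really produces the $\mu_{12}$-action on $\langle\De_1,\De_4\rangle^\perp$ together with the identity on $\langle\cD_1,\cD_4\rangle$. This reduces to a $2\times2$ linear-algebra identity among the entries of $H^\mu_{14}$ recorded in the proof of the theorem, and is routine once those entries are substituted. Alternatively one can bypass the computation entirely and argue by uniqueness: the operators in the corollary arise by repeating, for the dual local system $\mathcal{H}_2(\psi^{-1})$, the same geometric construction (vanishing cycles and invariant chambers) used in the proof of Theorem \ref{th:monod-rep}, under which the eigenvalues are inverted; the resulting transformations satisfy the duality of Lemma \ref{lem:duality}(1) and hence, by non-degeneracy, coincide with the $\cCM_i^{-\mu}$. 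Either route completes the proof of Corollary \ref{cor:monod-rep-dual}.
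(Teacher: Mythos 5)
Your proposal is correct. Note that the paper supplies no separate proof of Corollary \ref{cor:monod-rep-dual}: it is meant to follow from the ``similarly we can show'' remarks embedded in the proof of Theorem \ref{th:monod-rep}, i.e.\ from re-running the vanishing-cycle and invariant-chamber analysis on the dual local system $\mathcal{H}_2(\psi^{-1})$ with all eigenvalues inverted --- which is exactly the alternative route you sketch at the end. Your primary route is genuinely different and arguably cleaner: you take Theorem \ref{th:monod-rep} as given, observe that Lemma \ref{lem:duality}(1) together with the non-degeneracy of $\fI$ (Proposition \ref{prop:int-mat}) determines $\cCM_i^{-\mu}$ uniquely from $\CM_i^{\mu}$, and then verify that the displayed operators satisfy the invariance of the pairing --- via the identity $(1-\lambda)+(1-\lambda^{-1})=(1-\lambda)(1-\lambda^{-1})$ for the rank-one reflections $i=3,4,5$, and via the eigenspace decomposition $H_2(T,\psi^{-1})=\langle\cD_1,\cD_4\rangle\oplus\langle\De_1,\De_4\rangle^{\perp}$ (which is indeed a direct sum because $H^\mu_{14}$ is invertible) for $i=1,2$. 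What your approach buys is independence from any further geometric tracing of chains; what it costs is that you must separately address the degenerate parameter values, and here you only treat the $i=3,4,5$ prefactors explicitly --- for completeness you should also note that for $i=1,2$ the factor $\mu_{12}-1$ (resp.\ $\mu_{34}-1$) in the denominator of ${H^\mu_{14}}^{-1}$ (resp.\ ${H^\mu_{13}}^{-1}$) cancels against the prefactor $\mu_{12}-1$, exactly as in the proof of the theorem, so the expressions extend continuously when $c$ or $c'$ is an integer.
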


\subsubsection{Circuit matrices}\label{circuit}
Let $M_i^\mu$ and $\wM_i^{-\mu}$ $(i=1,\dots,5)$ be the circuit matrices 
along the loop $\rho_i$ 
with respect to the basis $^t(\De_1,\dots,\De_4)$ 
of $\Hl_2(T,\psi)$, and to $(\cD_1,\dots,\cD_4)$ 
of $H_2(T,\psi^{-1})$, respectively. That is,  we have transformations 
$$
\begin{pmatrix}
\De_1\\
\vdots\\
\De_4
\end{pmatrix}
\mapsto 
M_i^\mu
\begin{pmatrix}
\De_1\\
\vdots\\
\De_4
\end{pmatrix}
,\quad 
(\cD_1,\dots,\cD_4)\mapsto 
(\cD_1,\dots,\cD_4)
\wM_i^{-\mu}
$$
by the continuation along $\rho_i$.

\begin{corollary}
\label{cor:monod-matrix}
The circuit matrices are expressed as
\begin{eqnarray*}
M_i^\mu&=&\lambda_iI_4-(\lambda_i-1)
H^\mu \widetilde R_i (R_iH^\mu \widetilde R_i)^{-1} R_i\quad (i=1,2),\\
M_j^\mu&=&I_4-(1-\lambda_j)
H^\mu \widetilde r_j (r_jH^\mu \widetilde r_j)^{-1} r_j\quad (j=3,4,5),
\end{eqnarray*}
\begin{eqnarray*}
\wM_i^{-\mu}&=&\frac{1}{\lambda_i}I_4-\big(\frac{1}{\lambda_i}-1\big)
\widetilde R_i (R_iH^\mu\;^t \widetilde R_i)^{-1} R_iH^\mu 
\quad (i=1,2),\\
\wM_j^{-\mu}&=&I_4-\big(1-\frac{1}{\lambda_j}\big)
\widetilde r_j (r_jH^\mu \widetilde r_j)^{-1} r_j H^\mu\quad (j=3,4,5),\\
\end{eqnarray*}
where 
$$\lambda_1=\mu_{12}^{-1},\quad 
\lambda_2=\mu_{34}^{-1},\quad 
\lambda_3=\mu_2\mu_4\mu_5,\quad 
\lambda_4=\mu_1\mu_4\mu_5,\quad 
\lambda_5=\mu_2\mu_3\mu_5,
$$
$$R_1=\begin{pmatrix}
1 & 0 & 0 & 0\\
0 & 0 & 0 & 1\\
\end{pmatrix}
,\quad
R_2=\begin{pmatrix}
1 & 0 & 0 & 0\\
0 & 0 & 1 & 0\\
\end{pmatrix}
,\quad r_3=(0,1,0,0),
$$
$$
r_4=\big(-\frac{\mu_{45}-1}{\mu_5-1},1,0,
-\frac{\mu_{345}-1}{\mu_5-1}\big),\quad 
r_5=\big(-\frac{\mu_{25}-1}{\mu_5-1},1,
-\frac{\mu_{125}-1}{\mu_5-1},0\big),
$$
$$
R_1=\;^t R_1,\quad R_2=\;^t R_2,\quad r_3=\;^t r_3,\quad 
\widetilde r_4=\begin{pmatrix}
\frac{-(\mu_{45}-1)}{\mu_4(\mu_5-1)}\\
 1\\
 0\\
\frac{-1}{\mu_{34}(\mu_5-1)}\\
\end{pmatrix},\quad
\widetilde r_5=\begin{pmatrix}
\frac{-(\mu_{25}-1)}{\mu_2(\mu_5-1)}\\
 1\\
\frac{-1}{\mu_{12}(\mu_5-1)}\\
 0\\
\end{pmatrix}.
$$
Their explicit forms are 
\begin{eqnarray*}
M_1^\mu&=&
\begin{pmatrix}
1 & 0 & 0 & 0\\
\frac{(\mu_1-1) (\mu_4 \mu_5-1)}{\mu_1 (\mu_5-1) } 
& \frac{1}{\mu_1 \mu_2} & 0 
& \frac{(\mu_1-1) (\mu_{345}-1)}{\mu_1(\mu_5-1)  }\\
\frac{-(\mu_2-1)}{\mu_1\mu_2} & 0 & \frac{1}{\mu_1 \mu_2} & 0\\
0 & 0 & 0 & 1
\end{pmatrix},\\
M_2^\mu&=&
\begin{pmatrix}1 & 0 & 0 & 0\\
\frac{(\mu_3-1) (\mu_2 \mu_5-1)}{\mu_3 (\mu_5-1)} 
& \frac{1}{\mu_3 \mu_4} 
& \frac{(\mu_3-1) (\mu_{125}-1)}{\mu_3 (\mu_5-1)} 
& 0\\
0 & 0 & 1 & 0\\
\frac{-(\mu_4-1)}{\mu_3\mu_4} & 0 & 0 & \frac{1}{\mu_3 \mu_4}\end{pmatrix},
\end{eqnarray*}
\begin{eqnarray*}
M_3^\mu&=&
\begin{pmatrix}1 &\mu_5-1 & 0 & 0\\
0 & \mu_{245} & 0 & 0\\
0 & 0 & 1 & 0\\
0 & 0 & 0 & 1\end{pmatrix},
\\
M_4^\mu&=&
\begin{pmatrix}
1-\mu_1 +\mu_{145} & -\mu_1(\mu_5-1) & 0 
& \mu_1 (\mu_{345} -1)\\
\frac{-(\mu_1-1) (\mu_4 \mu_5-1)}{\mu_5-1} & \mu_1 & 0 
& \frac{-(\mu_1-1) (\mu_{345} -1)}{\mu_5-1}\\
-(\mu_4 \mu_5-1) & \mu_5-1 & 1 
& -(\mu_{345} -1)\\
0 & 0 & 0 & 1\end{pmatrix},
\\ 
M_5^\mu&=&
\begin{pmatrix}1-\mu_3+\mu_{235} & - \mu_3(\mu_5-1) 
& \mu_3 (\mu_{125}-1) & 0\\
\frac{-(\mu_3-1) (\mu_2 \mu_5-1)}{\mu_5-1} & \mu_3 
& \frac{-(\mu_3-1) (\mu_{125}-1)}{\mu_5-1} & 0\\
0 & 0 & 1 & 0\\
-(\mu_2 \mu_5-1) & \mu_5-1  & 
-(\mu_{125}-1) & 1\end{pmatrix},
\end{eqnarray*}

\begin{eqnarray*}
\wM_1^{-\mu}&=&
\begin{pmatrix}
1 & \frac{-(\mu_1-1)(\mu_4\mu_5-1)}{\mu_4(\mu_5-1)}
&\mu_1(\mu_2-1)(\mu_{125}-1) &0\\
0 & \mu_1\mu_2 & 0& 0\\
0 & 0 & \mu_1\mu_2 & 0 \\
0 &\frac{-(\mu_1-1)}{\mu_3\mu_4(\mu_5-1)}& 0 &1
\end{pmatrix},
\\
\wM_2^{-\mu}&=&
\begin{pmatrix}
1 & \frac{-(\mu_3-1)(\mu_2\mu_5-1)}{\mu_2(\mu_5-1)} & 0
&\mu_3(\mu_4-1)(\mu_{345}-1)\\
0 & \mu_3\mu_4 & 0 &0 \\
0 & \frac{-(\mu_3-1)}{\mu_1\mu_2(\mu_5-1)}& 1 & 0\\
0 & 0 & 0 & \mu_3\mu_4
\end{pmatrix},
\end{eqnarray*}
\begin{eqnarray*}
\wM_3^{-\mu}&=&
\begin{pmatrix}1 &0 & 0 & 0\\
\frac{-(\mu_5-1)}{\mu_5} & \frac{1}{\mu_{245}} & 0 & 0\\
0 & 0 & 1 & 0\\
0 & 0 & 0 & 1\end{pmatrix},
\\
\wM_4^{-\mu}&=&
\begin{pmatrix}
\frac{1-\mu_4\mu_5+\mu_{145}}{\mu_{145}}& 
\frac{(\mu_1-1)(\mu_4\mu_5-1)}{\mu_1\mu_4(\mu_5-1)}&
\frac{(\mu_4\mu_5-1)(\mu_{125}-1)}{\mu_4\mu_5} & 0\\
\frac{\mu_5-1}{\mu_1\mu_5}& \frac{1}{\mu_1} &
\frac{-(\mu_5-1)(\mu_{125}-1)}{\mu_5}& 0\\
0 & 0 & 1 & 0\\
\frac{-1}{\mu_{134}\mu_5} &
\frac{\mu_1-1}{\mu_{134}(\mu_5-1)}&
\frac{\mu_{125}-1}{\mu_{345}} & 1
\end{pmatrix},
\\
\wM_5^{-\mu}&=&
\begin{pmatrix}
\frac{1-\mu_2\mu_5+\mu_{235}}{\mu_{235}}& 
\frac{(\mu_3-1)(\mu_2\mu_5-1)}{\mu_2\mu_3(\mu_5-1)}& 0& 
\frac{(\mu_2\mu_5-1)(\mu_{345}-1)}{\mu_2\mu_5} \\
\frac{\mu_5-1}{\mu_3\mu_5}& \frac{1}{\mu_3} &
0 &\frac{-(\mu_5-1)(\mu_{345}-1)}{\mu_5}\\
\frac{-1}{\mu_{123}\mu_5} &
\frac{\mu_3-1}{\mu_{123}(\mu_5-1)}&
1 &\frac{\mu_{345}-1}{\mu_{125}}\\ 
0 & 0 & 0 & 1\\
\end{pmatrix}.
\end{eqnarray*}
They satisfy 
$$M_i^{\mu} H^\mu  \wM_i^{-\mu}= H^\mu\quad (i=1,\dots,5).$$
\end{corollary}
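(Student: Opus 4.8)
The plan is to read each circuit matrix directly off the coordinate-free descriptions of the circuit transformations in Theorem \ref{th:monod-rep} and Corollary \ref{cor:monod-rep-dual}, using throughout the single bookkeeping identity $\fI(\De_k,\cD_l)=H^\mu_{kl}$ supplied by Proposition \ref{prop:int-mat}. For $i=1$ I would substitute $\De=\De_k$ into the formula for $\CM_1^\mu$ and collect the four resulting identities into matrix form. Since $\big(\fI(\De_k,\cD_1),\fI(\De_k,\cD_4)\big)$ is the $k$-th row of the $4\times2$ matrix $H^\mu\widetilde R_1$ (with $\widetilde R_1={}^{t}R_1$ selecting columns $1,4$), since $H^\mu_{14}=R_1H^\mu\widetilde R_1$ is the $\{1,4\}$-minor, and since ${}^{t}(\De_1,\De_4)=R_1\,{}^{t}(\De_1,\dots,\De_4)$, the operator identity collapses to
$$M_1^\mu=\lambda_1 I_4-(\lambda_1-1)H^\mu\widetilde R_1\big(R_1H^\mu\widetilde R_1\big)^{-1}R_1,\qquad \lambda_1=\mu_{12}^{-1}.$$
The identical substitution with the selector $R_2$ for indices $\{1,3\}$ gives $M_2^\mu$.

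For $j=3,4,5$ the transformations fix a hyperplane, so I would carry out the same translation with a single special cycle. For $j=3$ the special cycle is $\De_2=r_3\,{}^{t}(\De_1,\dots,\De_4)$, the quantity $\fI(\De_k,\cD_2)$ is the $k$-th entry of $H^\mu\widetilde r_3$, and $\fI(\De_2,\cD_2)=r_3H^\mu\widetilde r_3$, so the formula for $\CM_3^\mu$ becomes $M_3^\mu=I_4-(1-\lambda_3)H^\mu\widetilde r_3(r_3H^\mu\widetilde r_3)^{-1}r_3$ with $\lambda_3=\mu_{245}$. For $j=4,5$ the special cycles are $\De_{145}=\De_2+\square_5^\psi$ and $\De_{235}=\De_2+\square_6^\psi$; here I first invoke Lemma \ref{lem:squares} to rewrite $\square_5^\psi,\square_6^\psi$ in the basis $\De_1,\dots,\De_4$, which identifies the coefficient row vectors with $r_4,r_5$, and dually Lemma \ref{lem:dual-squares} identifies the coefficient columns of $\cD_{145},\cD_{235}$ with $\widetilde r_4,\widetilde r_5$. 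Substituting then yields the stated uniform expressions, and the scalar $r_jH^\mu\widetilde r_j$ is exactly the normalizing number $\fI(\De_{145},\cD_{145})$, resp.\ $\fI(\De_{235},\cD_{235})$, appearing in Theorem \ref{th:monod-rep}.

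The dual matrices $\wM_i^{-\mu}$ are obtained identically from Corollary \ref{cor:monod-rep-dual}, the only point requiring care being that the basis $(\cD_1,\dots,\cD_4)$ is a \emph{row} and transforms by right multiplication; this is what produces the transposed placement of $R_i,\widetilde r_j$ and of $H^\mu$ in the stated formulas. With all these closed forms in hand, the explicit entries are produced by inserting the matrix $H^\mu$ of Proposition \ref{prop:int-mat} and its minors — the inverse ${H^\mu_{14}}^{-1}$ is already recorded in the proof of Theorem \ref{th:monod-rep} — and simplifying the resulting rational functions in $\mu_1,\dots,\mu_5$. Finally, the relation $M_i^\mu H^\mu\wM_i^{-\mu}=H^\mu$ is simply the matrix transcription of the monodromy-invariance of the intersection form, Lemma \ref{lem:duality}(1): representing $\fI$ in the chosen bases by $H^\mu$ and the circuit maps by $M_i^\mu$ on $\Hl_2(T,\psi)$ and $\wM_i^{-\mu}$ on $H_2(T,\psi^{-1})$ turns $\fI(\CM_i^\mu\De,\cCM_i^{-\mu}\cD)=\fI(\De,\cD)$ into exactly this identity; alternatively it can be verified directly from the explicit forms.

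I expect the main obstacle to be purely computational rather than conceptual: the simplification of the $4\times4$ matrices with entries that are rational functions of $\mu_1,\dots,\mu_5$ is lengthy and error-prone, so the genuine work lies in organizing the minor computations and tracking the cancellations — for instance the factor $\mu_{12}-1$ in the denominator of ${H^\mu_{14}}^{-1}$ cancelling against $\lambda_1-1=\mu_{12}^{-1}-1$, which is precisely what keeps the expressions regular in the specializations $c,c'\in\Z$ relevant to the applications.
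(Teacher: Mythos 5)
Your proposal is correct and follows essentially the same route as the paper: the paper's proof likewise identifies cycles with coordinate (row/column) vectors so that $\fI(\De,\cD)=zH^\mu\widetilde z$, reads the matrix formulas off Theorem \ref{th:monod-rep} and Corollary \ref{cor:monod-rep-dual} (with Lemmas \ref{lem:squares} and \ref{lem:dual-squares} supplying $r_4,r_5,\widetilde r_4,\widetilde r_5$), and deduces $M_i^\mu H^\mu \wM_i^{-\mu}=H^\mu$ from Lemma \ref{lem:duality}(1). Your write-up merely spells out the bookkeeping that the paper leaves implicit.
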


\begin{proof}
We identify 
$$\De=\sum_{i=1}^4 z_i \De_i\in \Hl_2(T,\psi),\quad 
\cD
=\sum_{i=1}^4 \widetilde z_i \cD_i\in H_2(T,\psi^{-1})$$
with the row and column vectors 
$$z=(z_1,\dots,z_4),\quad \widetilde z=\begin{pmatrix}
\widetilde z_1\\
\vdots\\
\widetilde z_4
\end{pmatrix},$$
respectively. Note that 
$$\fI(\De,\cD)=zH^\mu\widetilde z.$$
Theorem \ref{th:monod-rep} yields these expressions.
We have
$$M_i^{\mu} H^\mu  \wM_i^{-\mu}= H^\mu\quad (i=1,\dots,5)$$
by Lemma \ref{lem:duality} (1).
\end{proof}

\begin{remark}
As a result, $M_i^\mu$ $(i=1,\dots,5)$ coincides with 
the circuit matrix with respect to the basis  $\De_1,\dots,\De_4$ 
by Theorem 7.1 in \cite{MY}.
\end{remark}
\section{Schwarz maps for $\mathcal{E}$ as the universal Abel-Jacobi maps}\label{Schmap}
In this section we introduce a system $\mathcal E$, and describe its Schwarz map, which is the main result of this paper.
\subsection{The system $\mathcal E$: a restriction of the system $E(3,6;1/3)$}\label{E36}
We introduce in this subsection a system $\mathcal E$, which is a system $E_2$ with specific parameters, and mention a reason why this system is of special interest. 

Let $X(3,6)$ be the configuration space of 
six lines $\ell_1,\dots,\ell_6$ in general position in the projective plane 
$\P^2=\{(p:q:r)\}$. We identify the space $X(3,6)$ with 
$$\left\{\left(\begin{array}{cccccc}1&0&1&x^1&x^2&0\\
0&1&1&x^3&x^4&0\\0&0&1&1&1&1\end{array}\right)\Bigg|
\textrm{ no }3\times3\textrm{ - minor vanish}\right\},$$
where $\ell_6$ is the line at infinity in the $pq$-plane 
given by $r=0$.
The system $$E(3,6;a),\quad a=(a_1,\dots,a_6),\quad a_1+\cdots+a_6=3$$
is generated by the linear differential equations which annihilate 
functions on $X(3,6)$ defined by the integral
$$\iint_{\rm a\ cycle}\prod_{j=1}^5f_j(x;p,q)^{a_j-1}dp\wedge dq. 
\qquad \left(
\begin{array}{l}
f_1=p,\quad f_2=q,\quad f_3=p+q+1,\\
f_4=x^1p+x^3q+1,\quad f_5=x^2p+x^4q+1.
\end{array}\right)
$$
The Schwarz map of the system $E(3,6;a)$ is studied 
(cf. \cite{MSY}, \cite{MSTY}) in two cases $a_j{=}1/2$ and $a_j\equiv1/6{\mod \Z}$. 
We have been interested in the case $a_j\equiv1/3 {\mod \Z}$.

On the other hand, let $X_2$ be the 2-dimensional stratum defined 
by $x^2=x^3=0$, which is the space of six lines such that the three lines 
$\{\ell_1,\ell_3,\ell_6\}$ meet at a point, 
the three lines $\{\ell_1,\ell_2,\ell_5\}$ meet at another point, 
and nothing further special occurs. 
It is known (\cite{MSY}) that the restriction of $E(3,6;a)$ onto $X_2$ is 
the Appell's hypergeometric system $E_3$, which is projectively equivalent 
(multiplying a function to the unknown) to
$$E_2(a_1,1-a_5,1-a_6,2-a_2-a_5,2-a_3-a_6;x,y),\quad x=1/x^2,\ y=1/x^4.$$
Setting $a=(4/3,1/3,1/3,1/3,1/3,1/3)$, we define
$$\mathcal{E}:=E_2\left(\frac43,{\bf \frac23},{\bf \frac43};x,y\right),
\quad {\rm where}\quad{\bf \frac23}=\left(\frac23,\frac23\right),
\ {\bf \frac43}=\left(\frac43,\frac43\right).$$   
We believe that the first step of understanding $E(3,6;a), a_j\equiv 1/3{\mod \Z}$ is the study of the system $\mathcal{E}$. 

\par\bigskip
The Schwarz map of a system is defined by  the ratio of linearly independent solutions. 
The main objective of this paper is the Schwarz map of the hypergeometric system $\mathcal{E}.$ The system $\mathcal{E}$ admits solutions stated in Proposition \ref{matome}. The next subsection gives a geometric background of understanding these solutions.

\subsection{A family of curves of genus 2}\label{genus2}
Consider a family of curves of genus 2 given as triple covers of $\P^1$:
$$C_t:S^3=s^2(1-s)(t-s)^2,\quad t:{\rm \ parameter},$$
 branching at four points $\{0,1,t,\infty\}$.
We choose two linearly independent holomorphic 1-forms:
$$\omega_1=s^{-2/3}(s-1)^{-1/3}(s-t)^{-2/3}ds,\quad\omega_2=s^{-1/3}(s-1)^{-2/3}(s-t)^{-1/3}ds,$$
and put
$$\varphi_1(s,t)=\int_0^s\omega_1(t),\quad \varphi_2(s,t)=\int_0^s\omega_2(t).$$ 
\par
For a fixed $t$, the {\it Abel-Jacobi map} for the curve $C_t$ is a multi-valued map
$$C_t\ni s \longmapsto \left(\varphi_1(s,t),\ \varphi_2(s,t)\right)\in\C^2;$$
It is a single-valued map to its Jacobian $\C^2/L$, where $L$ is a lattice generated by its periods: integrals over possible loops with base $s=0$:
$$\left(\varphi_1(0,t),\ \varphi_2(0,t)\right).$$
\subsection{The Schwarz map of $\mathcal{E}$}
Proposition \ref{matome} for $a=c=c'=4/3,b=b'=2/3$ 
implies that after the coordinate change 
$$\xx=\frac{-x}{1-x},\quad  \zz=\xx\yy,\quad{\rm where}\quad \yy=\frac{-y}{1-y},$$
and the change of unknown: $u\to (1-y)^{2/3}(1-x)^{2/3}u$, 
two linearly independent solutions of $$E\left({\bf \frac23},\frac43;\zz\right)$$
and the two indefinite integrals
$$\int_0^\xx s^{-2/3}(1-s)^{-1/3}(\zz-s)^{-2/3}ds,\quad\zz^{-1/3}\int_0^\xx s^{-1/3}(1-s)^{-2/3}(\zz-s)^{-1/3}ds$$
form a set of fundamental solutions of $\mathcal{E}$. 

On the other hand, the integral representation of the Gauss hypergeometric equation given in Section 1 asserts that the integral above along any {\it closed} path gives a solution of $E({\bf{2/3}},4/3)$. Thus we find that  the Schwarz map of $\mathcal{E}$ is the totality of the Abel-Jacobi map of the family $\{C_t\}$ after a slight modification (multiplying $t^{-1/3}$ to the second coordinate).

Thus we get
\begin{theorem}
\label{th:gen-Schwarz} 
If we change the coordinates $(x,y)$ of $\mathcal{E}=E_2(4/3,{\bf2/3},{\bf4/3};x,y)$ as
$$s(=\xx)=\frac{-x}{1-x},\quad  t(=\zz)=\xx\yy,\quad{\rm where}\quad \yy=\frac{-y}{1-y},$$
the Schwarz map of the system $\mathcal{E}$ is equivalent to the 
projectivization of the family of the  Abel-Jacobi map of 
the family $\{C_t\}$ of curves of genus 2, explicitly given as
$$\mathcal{S}_2:\dsp\bigcup_{t\in \C-\{0,1\}}C_t\ni(s,t)\longmapsto \varphi_1(0,t):t^{-1/3}\varphi_2(0,t):\varphi_1(s,t):t^{-1/3}\varphi_2(s,t)\in\P^3.$$
The latter two  $\varphi_1(s,t)$ and $t^{-1/3}\varphi_2(s,t)$ are $f_1$ and $f_2$ in Section 2.
The map by means of the former two
$$\mathcal{S}_1:\P^1-\{0,1,\infty\}\ni t\longmapsto \varphi_1(0,t):\ t^{-1/3}\varphi_2(0,t)\in \P^1$$
is the Schwarz map of the hypergeometric equation $E({\bf 2/3},4/3)$. Its image is a disc, and the inverse map of $\mathcal{S}_1$ is single-valued automorphic function on the disc with respect to the triangle group of type $[3,\infty,\infty]$; in other words, the disc is tessellated by Schwarz triangles of type $[3,\infty,\infty]$. 
\par\noindent
The image surface under $\mathcal{S}_2$ can be regarded as lying in a fiber bundle with the $\mathcal{S}_1$-image disc as its base and the Jacobian variety of $C_t$ as the fiber on the image point $\mathcal{S}_1(t)$. 
\end{theorem}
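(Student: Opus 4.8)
The plan is to build the Schwarz map directly out of the explicit fundamental system furnished by Theorem~\ref{matome}, and then to read off each of its four coordinates as a component of the Abel--Jacobi map of the family $\{C_t\}$. First I would specialize Theorem~\ref{matome} to $a=c=c'=4/3$, $b=b'=2/3$, which exhibits the transformed solution space $(1-y)^{2/3}(1-x)^{2/3}S_2(\mathcal{E})$ as the span of the two indefinite integrals $f_1,f_2$ together with the two-dimensional Gauss space $S(\mathbf{2/3},4/3;\zz)$ pulled back along $\zz=xy/((1-x)(1-y))$. Performing the coordinate change $s=\xx$, $t=\zz$ and comparing integrands, one finds that $f_1$ is the integral of $\omega_1$ and $f_2$ is $t^{-1/3}$ times the integral of $\omega_2$; the constant root-of-unity factors relating $(1-\sigma)$ to $(\sigma-1)$ and $(t-\sigma)$ to $(\sigma-t)$ are harmless, since the Schwarz map is defined only up to a projective transformation. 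This identifies the last two coordinates as $\varphi_1(s,t)$ and $t^{-1/3}\varphi_2(s,t)$.

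Next I would identify the remaining Gauss factor $S(\mathbf{2/3},4/3;t)$ with the period lattice. Using the second integral representation of the Gauss equation recalled in \S\ref{sec:Appell-E1-E2} (compare Lemma~\ref{wellknown2}), integrating $\omega_1$ or $\omega_2$ over a closed path yields a solution of the Picard--Fuchs equation of the family as a function of $t$. Matching exponents shows that the periods of $\omega_1$ solve $E(\mathbf{2/3},4/3)=E(2/3,2/3,4/3)$, while the periods of $\omega_2$ solve $E(1/3,1/3,2/3)$; the twist $t^{-1/3}$ carries the latter onto the second, singular solution $t^{-1/3}F(1/3,1/3,2/3;t)$ of $E(\mathbf{2/3},4/3)$. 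Since the two have distinct local exponents at $t=0$, they are independent and span $S(\mathbf{2/3},4/3;t)$; these are precisely the periods $\varphi_1(0,t)$ and $t^{-1/3}\varphi_2(0,t)$ generating the lattice $L$. The Schwarz map of $\mathcal{E}$ is then the projectivization of the resulting solution vector, which is exactly $\mathcal{S}_2$.

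For the base map $\mathcal{S}_1$ I would compute the local exponents of $E(\mathbf{2/3},4/3)$: the exponent difference is $1/3$ at $t=0$ and $0$ at $t=1$ and $t=\infty$, giving a curvilinear Schwarz triangle with angles $\pi/3,0,0$; by the classical theory this forces the triangle group $[3,\infty,\infty]$, a disc image, and a single-valued automorphic inverse. Finally, the fiber bundle description is obtained by reorganizing the coordinates: for each fixed $t$ the pair $(\varphi_1(s,t),t^{-1/3}\varphi_2(s,t))$ reduced modulo $L$ is the Abel--Jacobi map into $\mathrm{Jac}(C_t)=\C^2/L$, and letting $t$ run over the $\mathcal{S}_1$-image disc assembles these Jacobians into the claimed bundle.

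The analytic matching of solutions is essentially mechanical once Theorem~\ref{matome} is in hand; the genuine difficulty lies in the geometric bookkeeping of the last two paragraphs, namely verifying that the Picard--Fuchs equations of the two $\Z/3$-eigenspaces of $H^1(C_t)$ are exactly the stated Gauss equations, that the $t^{-1/3}$ twist really interchanges their period systems so that $\varphi_1(0,t)$ and $t^{-1/3}\varphi_2(0,t)$ span a single copy of $E(\mathbf{2/3},4/3)$, and that the rank $4$ period lattice $L$ makes $\C^2/L$ the full Jacobian of the genus $2$ curve (using $\dim H^0(C_t,\Omega^1)=2$). This $\Z/3$-eigenspace and Jacobian structure is where care is needed.
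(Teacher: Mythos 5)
Your proposal follows essentially the same route as the paper: specialize Theorem \ref{matome} to $(a,b,b',c,c')=(4/3,2/3,2/3,4/3,4/3)$, identify the indefinite integrals $f_1,f_2$ with $\varphi_1(s,t)$ and $t^{-1/3}\varphi_2(s,t)$ after the coordinate change, recognize the closed-path integrals of $\omega_1,\omega_2$ (the latter twisted by $t^{-1/3}$) as spanning $S({\bf 2/3},4/3;t)$ via the Gauss integral representation, and read off the triangle group $[3,\infty,\infty]$ from the exponent differences $1/3,0,0$. Your extra bookkeeping on the two $\Z/3$-eigenforms satisfying $E(2/3,2/3,4/3)$ and $E(1/3,1/3,2/3)$ respectively, with the $t^{-1/3}$ twist matching them up, is a correct and slightly more explicit version of what the paper compresses into the phrase ``after a slight modification.''
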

{A triangle of type $[p,q,r]$ is a hyperbolic triangle with angles $\pi/p,\pi/q$ and $\pi/r$; the above triangle has angles $\pi/3,0$ and $0$. The triangle group of type $[p,q,r]$ is the group consisting of the even products of the reflections with the sides of the triangle of type $[p,q,r]$ as axes. It is known that the triangle group of type $[3,\infty,\infty]$ is conjugate to the congruence subgroup
$$\Gamma_1(3)=\left\{\left(\begin{array}{cc}a&b\\c&d\end{array}\right)\in 
{\rm SL}_2(\Z)\mid a-1,d-1,c\equiv 0\mod 3\right\}.$$ 
For arithmetic triangle groups, see \cite{T}.}

{Other than this family of curves, there are two families of curves of genus 2 branching at four points in $\P^1$; see Appendix 2}.
\subsection{Monodromy group of $\mathcal E$}\label{special}
From monodromy side, Theorem \ref{th:gen-Schwarz} can be understood as follows. Define $M_i$ and $\wM_i$ $(i=1,\dots,5)$ 
by substituting $\mu_1=\cdots=\mu_5=\omega^2=\frac{-1-\sqrt{-3}}{2}$
into $M_i^\mu$ and $\wM_i^{-\mu}$ defined in \S \ref{circuit}, respectively.  
They are the circuit matrices for $\mathcal{E}$ 
with respect to 
$$^t\Big(\iint_{\square_1}\psi_\CE dt_1dt_2,\dots,\iint_{\square_4}\psi_\CE dt_1dt_2\Big)$$
and those for $\mathcal{E}^\vee=E_2(-\frac{4}{3},
-\frac{2}{3},-\frac{2}{3},-\frac{4}{3},-\frac{4}{3})$ 
with respect to 
$$\Big(\iint_{\square_1}\psi_\CE^{-1} dt_1dt_2,\dots,\iint_{\square_4}\psi_\CE^{-1} dt_1dt_2
\Big),$$
where
$$\psi_\CE
=\frac{1}{\sqrt[3]{t_1(1-t_1)t_2(1-t_2)(1-t_1x-t_2y)}}
.$$

\begin{corollary}
\label{cor:monodromy}
We have 
$$
\begin{array}{ll}
M_1=\begin{pmatrix}
1 & 0 & 0 & 0\\
-2 \omega-1 & -\omega-1 & 0 & 0\\
-2 \omega-1 & 0 & -\omega-1 & 0\\
0 & 0 & 0 & 1
\end{pmatrix},\quad
& 
\wM_1=\begin{pmatrix}
1 & 2\omega+1 & 0 & 0\\
0 & \omega & 0 & 0\\
0 & 0 & \omega & 0\\
0 & \omega+1 & 0 & 1
\end{pmatrix},
\\
M_2=\begin{pmatrix}
1 & 0 & 0 & 0\\
-2 \omega-1 & -\omega-1 & 0 & 0\\
0 & 0 & 1 & 0\\
-2 \omega-1 & 0 & 0 & -\omega-1
\end{pmatrix},\quad
&
\wM_2=\begin{pmatrix}
1 & 2\omega+1 & 0 & 0\\
0 & \omega & 0 & 0\\
0 & \omega+1 & 1 & 0\\
0 & 0 & 0 & \omega
\end{pmatrix},
\\
\end{array}
$$
$$
\begin{array}{ll}
M_3=\begin{pmatrix}
1 & -\omega-2 & 0 & 0\\
0 & 1 & 0 & 0\\
0 & 0 & 1 & 0\\
0 & 0 & 0 & 1
\end{pmatrix},\quad
&
\wM_3=\begin{pmatrix}
1 & 0 & 0 & 0\\
\omega-1 & 1 & 0 & 0\\
0 & 0 & 1 & 0\\
0 & 0 & 0 & 1
\end{pmatrix},

\\
M_4=\begin{pmatrix}
\omega+3 & -2 \omega-1 & 0 & 0\\
-\omega+1 & -\omega-1 & 0 & 0\\
-\omega+1 & -\omega-2 & 1 & 0\\
0 & 0 & 0 & 1
\end{pmatrix},\quad
& 
\wM_4=\begin{pmatrix}
-\omega+2 & \omega+2 & 0 & 0\\
2\omega+1 & \omega & 0 & 0\\
0 & 0 & 1 & 0\\
-\omega & 1 & 0 & 1
\end{pmatrix},
\\
M_5=\begin{pmatrix}
\omega+3 & -2 \omega-1 & 0 & 0\\
-\omega+1 & -\omega-1 & 0 & 0\\
0 & 0 & 1 & 0\\
-\omega+1 & -\omega-2 & 0 & 1
\end{pmatrix},\quad
&
\wM_5=\begin{pmatrix}
-\omega+2 & \omega+2 & 0 & 0\\
2\omega+1 & \omega & 0 & 0\\
-\omega & 1 & 1 & 0\\
0 & 0 & 0 & 1
\end{pmatrix}.

\\
\end{array}
$$
They satisfy 
$$M_i H \wM_i=H, \quad (i=1,\dots,5),\qquad H
=\frac{-1}{3}
\begin{pmatrix}
1& \omega& 0 &0\\
-\omega-1& 0& 0 &0\\
-\omega-1& 0& \sqrt{-3} &0\\ 
-\omega-1& 0& 0 &\sqrt{-3}\\ 
\end{pmatrix}.
$$
\end{corollary}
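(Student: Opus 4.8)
The plan is to recognize that $M_i$, $\wM_i$ and $H$ are \emph{by definition} the specializations of the generic circuit matrices $M_i^\mu$, $\wM_i^{-\mu}$ and of the intersection matrix $H^\mu$ at the parameter point $(a,b,b',c,c')=(4/3,2/3,2/3,4/3,4/3)$, so the whole statement reduces to a substitution followed by cyclotomic simplification, together with a free ride on the identity already proved in Corollary \ref{cor:monod-matrix}. First I would compute the local exponents. From $\mu_1=e^{2\pi\sqrt{-1}b},\dots,\mu_5=e^{-2\pi\sqrt{-1}a}$ one gets $\mu_1=\mu_3=e^{4\pi\sqrt{-1}/3}$, $\mu_2=\mu_4=e^{4\pi\sqrt{-1}/3}$ and $\mu_5=e^{-8\pi\sqrt{-1}/3}=e^{4\pi\sqrt{-1}/3}$, so all five equal $\omega^2=\frac{-1-\sqrt{-3}}{2}$ with $\omega=e^{2\pi\sqrt{-1}/3}$. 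It must then be checked that the hypotheses of Corollary \ref{cor:monod-matrix} hold: condition \eqref{eq:semi-non-int} is satisfied because $\mu_j=\omega^2\neq1$ and $\mu_{12345}=\omega^{10}=\omega\neq1$, while every triple product degenerates, $\mu_{ijk}=\omega^6=1$, exactly as recorded in \S\ref{sec:monod}.

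Next I would substitute $\mu_j=\omega^2$ into the explicit matrices of Corollary \ref{cor:monod-matrix} and reduce with $\omega^3=1$ and $1+\omega+\omega^2=0$ (equivalently $\omega^2=-1-\omega$, $\omega^{-1}=\omega^2$). Every entry carrying a factor $\mu_{125}-1$ or $\mu_{345}-1$ vanishes identically, which accounts for the many zeros in the asserted $M_i$ and $\wM_i$; the surviving entries collapse to elements of $\Z[\omega]$ after simplifications such as $\mu_4\mu_5=\omega^4=\omega$ and $\mu_{145}=1$. A point to watch is that several displayed entries of $M_i^\mu$ and $\wM_i^{-\mu}$ carry $\mu_{ijk}-1$ in a \emph{denominator}; but the continuity already established in the proof of Theorem \ref{th:monod-rep} guarantees that these apparent poles cancel, so the substitution is legitimate and produces finite values at $\mu_j=\omega^2$.

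For $H$ I would specialize $H^\mu$ of Proposition \ref{prop:int-mat} in the same manner. The $(1,3),(1,4),(2,3),(2,4)$ entries are proportional to $\mu_{125}-1$ or $\mu_{345}-1$ and the $(2,2)$ entry to $\mu_{245}-1=\omega^6-1$, so all vanish, giving the displayed block shape. The nonzero entries follow from the reductions $(\omega-1)^2=-3\omega$, $(\omega+1)^2=\omega$, $(\omega^2-1)^2=3(\omega+1)$ and $1+2\omega=\sqrt{-3}$; for instance the $(1,1)$ entry becomes $\tfrac{(\omega-1)^2}{(\omega^2-1)^4}=\tfrac{-3\omega}{9\omega}=-\tfrac13$, the first-column entries collapse to $\tfrac{\omega+1}{3}$, and the two diagonal entries arising from $\mu_{25}-1=\mu_{45}-1=\omega-1\neq0$ collapse to $\tfrac{1}{1+2\omega}=\tfrac{-\sqrt{-3}}{3}$, which is the source both of the two $\sqrt{-3}$'s and of the common factor $-\tfrac13$.

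Finally, the relation $M_iH\wM_i=H$ is not an independent computation: it is the specialization of the generic identity $M_i^\mu H^\mu\wM_i^{-\mu}=H^\mu$ from Corollary \ref{cor:monod-matrix}, valid by continuity once the three specializations above are in hand. I therefore expect the only real obstacle to be bookkeeping, namely tracking the cyclotomic reductions carefully and confirming that no denominator of the form $\mu_{ijk}-1$ genuinely blows up at $\mu_j=\omega^2$, rather than anything conceptual; as an independent safeguard one may also verify $M_iH\wM_i=H$ by direct $4\times4$ matrix multiplication.
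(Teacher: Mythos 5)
Your proposal is correct and follows exactly the paper's (implicit) route: the corollary is by definition the specialization $\mu_1=\cdots=\mu_5=\omega^2$ of the explicit matrices in Corollary \ref{cor:monod-matrix} and of $H^\mu$ in Proposition \ref{prop:int-mat}, reduced via $\omega^3=1$, $1+\omega+\omega^2=0$, with $M_iH\wM_i=H$ inherited from the generic identity. Your sample computations (e.g.\ $(\omega-1)^2=-3\omega$, $(\omega^2-1)^4=9\omega$, $1+2\omega=\sqrt{-3}$) check out, and your caution about denominators of the form $\mu_{ijk}-1$ is handled correctly by the continuity argument already in the proof of Theorem \ref{th:monod-rep}.
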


By Proposition \ref{prop:inv-subspace} and Remark \ref{rem:kernel},
the subspace spanned by solutions
$\iint_{\square_1} \psi_\CE dt_1dt_2$ and
$\iint_{\square_2} \psi_\CE dt_1dt_2$ is invariant under the monodromy
representation. In fact,
the top-left $2\times2$ block matrices $M_i'$ of $M_i$ $(i=1,\dots,5)$
act on this space.
Note that $M'_1=M'_2$, $M'_4=M'_5$.
Let $G$ be the group generated by $M'_1$,  $M'_3$ and   $M'_5$.
The group $G$ is isomorphic to the triangle group $[3,\infty,\infty]$,
and is contained in
the unitary group
$$\Big\{g\in \mathrm{GL}_2(\Z[\omega])\mid g H' \;^t \overline{ g}=H'=
\begin{pmatrix}
-1 & -\omega \\
\omega+1 &0
\end{pmatrix}
\Big\}.
$$
By a matrix
$$P=\begin{pmatrix}
1 &1 \\
0& -2-\omega\\
\end{pmatrix},
$$
the Hermite matrix $H'$ and circuit matrices $M'_i$ $(i=1,3,5)$
are transformed as
$$
PH'\;^t \overline{P}=
\sqrt{-3}\begin{pmatrix}
0 & 1 \\ -1 & 0
\end{pmatrix},
$$
$$PM'_1P^{-1}=\omega\begin{pmatrix}
-2 & -1 \\3&1
\end{pmatrix},\quad
PM'_3P^{-1}=\begin{pmatrix}
1 & 1 \\0&1
\end{pmatrix},\quad
PM'_5P^{-1}=\begin{pmatrix}
4 & 3 \\-3&-2
\end{pmatrix}.
$$
Hence the projectivization of $G$ is isomorphic to
the congruence subgroup $\Gamma_1(3)$ of $\mathrm{SL}_2(\Z)$
and the ratio
$\iint_{\square_1} \psi_\CE dt_1dt_2/\iint_{\square_2} \psi_\CE dt_1dt_2$
can be regarded as the map $\mathcal{S}_1$ in Theorem \ref{th:gen-Schwarz}
and as an element of the upper-half space.

\begin{appendix}
\section{Restriction of the system $E(3,6;a)$ on a 3-dimensional stratum}
\label{3-dim-S}
Let $X_3$ denote the stratum defined by $x^2=0$ 
and let us restrict the system $E(3,6;a)$ to this stratum, 
which is the space of six lines such that the three lines 
$\{\ell_1,\ell_3,\ell_6\}$ meet at a point, 
and nothing further special occurs. 
This system is denoted by $E(3,6;a)|X_3$ or $EX_3$. 
Little is known about this system. 

{Before stating the proposition in this section, we briefly recall the 
Appell-Lauricella's system $E_D^{(3)}(a,b_1,b_2,b_3,c;y^1,y^2,y^3):$
\begin{eqnarray*}
 && \delta_i(\delta + c-1)u - y^i(\delta_i+b_i)(\delta+a)u=0, \\
&& y^i(\delta_i+b_i)\delta_ju - y^j(\delta_j+b_j)\delta_iu=0,
\end{eqnarray*}
where $(y^1,y^2, y^3)$ are variables, and 
$\delta_i=y^i \partial/\partial y^i$ and $\delta=\delta_1+\delta_2+\delta_3$.
This is a 3-variable version of the Appell's $E_1$. It admits  solutions given by a power series
$$F_D( a,b_1,b_2,b_3,c;y^1,y^2,y^3)=\sum_{n_1,n_2,n_3}^{\infty}\frac{(a,n_{123})(b_1,n_1)(b_2,n_2)(b_3,n_3)}{(c,n_{123})n_1!n_2!n_3!}(y^1)^{n_1}(y^2)^{n_2}(y^3)^{n_3},$$
where $n_{123}=n_1+n_2+n_3$, and by an integral
$$\int_0^1t^{a-1}(1-t)^{c-a-1}(1-ty^1)^{-b_1}(1-ty^2)^{-b_2}(1-ty^3)^{-b_3}dt.$$The collection of solutions is denoted by $S_D^{(3)}(a,b_1,b_2,b_3,c;y^1,y^2,y^3)$}

In this section, we prove the following proposition.

\begin{proposition}\label{X3FD}
If $a_2+a_4+a_5=1$, then the system $E(3,6;a)|X_3$ is
reducible and has a subsystem 
isomorphic to the Appell-Lauricella's system $E_D^{(3)}=E_D^{(3)}$ in 3 variables 
with 4 free parameters. More precisely, the collection of the solutions of $E(3,6;a)|X_3$ includes
\[
(1-x^1)^{-a_2}S_D^{(3)}\left(a_3,a_4,1-a_6,a_2,1+a_3-a_5; x^3, x^4, 
\dsp{x^3-x^1\over 1-x^1}\right).
\] 
Note $a_1+\cdots+a_6=3$.
\end{proposition}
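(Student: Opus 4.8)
The plan is to imitate, in three variables, the three-fold strategy used for Proposition~\ref{prop:s2}: a power-series identity of Bailey type, an integral-representation reduction, and a direct manipulation of the defining differential equations. The power-series route would posit a transformation formula
$$(1-x^1)^{-a_2}F_D\!\left(a_3,a_4,1-a_6,a_2,1+a_3-a_5;\,x^3,x^4,\tfrac{x^3-x^1}{1-x^1}\right)
=(\text{the series solving }E(3,6;a)|X_3),$$
valid under suitable non-integral conditions; this explains the inclusion but is not a full proof. Only the differential-equation argument is valid for all parameters, and it is the one I would present in detail. By construction it specializes, on setting $x^3=0$, to a proof of Proposition~\ref{prop:s2}, exactly as anticipated in Subsection~\ref{subsec:equat}.

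The geometric heart of the matter is the integral reduction, which I would set up first. A solution of $E(3,6;a)|X_3$ is
$$\iint p^{a_1-1}q^{a_2-1}(p+q+1)^{a_3-1}(x^1p+x^3q+1)^{a_4-1}(x^4q+1)^{a_5-1}\,dp\wedge dq,$$
the last factor being $f_5$ restricted to $x^2=0$. I would introduce a birational change $(p,q)\to(\sigma,\tau)$ analogous to the map $t\to\tau$ of Subsection~\ref{subsec:integ}, arranged so that the five affine lines pass to a configuration of horizontal and vertical lines together with a single coupling factor carrying $a_2+a_4+a_5-1$ in its exponent. Under the hypothesis $a_2+a_4+a_5=1$ this coupling factor is identically $1$, so the double integral factors as a Beta integral (a constant in $x^1,x^3,x^4$) times a single integral in the remaining variable. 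The remaining task is to match that single integral, after extracting $(1-x^1)^{-a_2}$ and reading off $y^1=x^3$, $y^2=x^4$, $y^3=(x^3-x^1)/(1-x^1)$, with the Euler integral
$$\int_0^1 t^{a_3-1}(1-t)^{-a_5}(1-tx^3)^{-a_4}(1-tx^4)^{a_6-1}\Big(1-t\tfrac{x^3-x^1}{1-x^1}\Big)^{-a_2}dt$$
of $E_D^{(3)}(a_3,a_4,1-a_6,a_2,1+a_3-a_5;y^1,y^2,y^3)$; here $c-a-1=-a_5$ and $-b_2=a_6-1$, consistently with the standing relation $a_1+\cdots+a_6=3$.

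For the argument valid at all parameters I would set $z=(1-x^1)^{-a_2}u$ with $u\in S_D^{(3)}(a_3,a_4,1-a_6,a_2,1+a_3-a_5;y^1,y^2,y^3)$, compute $\pa z/\pa x^i$ by the chain rule through the substitution $y^1=x^3,\ y^2=x^4,\ y^3=(x^3-x^1)/(1-x^1)$, and use the defining relations of $E_D^{(3)}$ to reduce every second-order derivative of $u$ to a $\C(x)$-combination of $u,\delta_1u,\delta_2u,\delta_3u$ (these four span the cyclic module, $E_D^{(3)}$ having rank four). Substituting the result into each generator of $E(3,6;a)|X_3$ should produce linear forms in these four quantities whose coefficients vanish identically once $a_2+a_4+a_5=1$ is imposed; this shows $z$ solves $E(3,6;a)|X_3$ and yields the claimed inclusion.

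The main obstacle is twofold. First, one must write down explicit generators of the restricted system $E(3,6;a)|X_3$ — the system about which ``little is known'' — by restricting the operators annihilating the Euler integral to $x^2=0$; this bookkeeping is the prerequisite for both arguments. Second, because $y^3=(x^3-x^1)/(1-x^1)$ mixes $x^1$ and $x^3$ nonlinearly, the chain rule proliferates terms, so the real work is organizing the substitution so that the cancellation at $a_2+a_4+a_5=1$ becomes visible. The design of the birational map in the integral approach is precisely what renders that cancellation transparent, and I expect the cleanest write-up to carry out the differential-equation check with the coordinates suggested by that map.
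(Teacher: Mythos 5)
Your three-pronged plan (power-series identity, Euler-integral factorization at a triple point, elimination of second derivatives in the restricted system) is exactly the structure of the paper's Appendix A, and your differential-equation prong matches the paper's rigorous argument: one sets $w=(1-x^1)^{-a_2}u$, changes variables to $(x,y,z)=(x^3,x^4,(x^3-x^1)/(1-x^1))$, and checks that the generators $L_1,\dots,L_5$ of $E(3,6;a)|X_3$ become $\C(x,y,z)$-combinations of the $E_D^{(3)}$ operators precisely when $a_2+a_4+a_5=1$. Note that the generators you worry about having to derive are simply quoted from \cite[p.~24]{MSY}, so that prong is more routine than you anticipate; your target Euler integral and the parameter dictionary $a=a_3$, $b_1=a_4$, $b_2=1-a_6$, $b_3=a_2$, $c=1+a_3-a_5$ are also exactly the paper's.

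There is, however, one concrete misstep in your integral setup. You start from the Section~\ref{E36} integrand restricted to $x^2=0$, in which the concurrent triple is $\{\ell_2,\ell_5,\ell_6\}$ (the forms $q$, $x^4q+1$ and the line at infinity, all through $(1:0:0)$), so the Möbius change along the pencil $\{q=\mathrm{const}\}$ produces a coupling factor with exponent $1-(a_1+a_3+a_4)$, and your factorization criterion would read $a_1+a_3+a_4=1$ — not the hypothesis $a_2+a_4+a_5=1$. The paper instead works with the renormalized integrand
\[
\iint p^{a_1-1}q^{a_2-1}r^{a_3-1}(p+q+r)^{a_4-1}(p+x_1q+x_3r)^{a_5-1}(p+x_4r)^{a_6-1}\,dp\wedge dq,
\]
in which the concurrent triple is $\{\ell_1,\ell_3,\ell_6\}$ (all missing $q$, meeting at $(0:1:0)$); the Möbius change is then in $q$, the Beta integral carries exponents $a_2-1$, $a_4-1$, and the coupling factor is $D^{1-a_{245}}$, which is trivial exactly under the stated hypothesis. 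So before running your computation you must either adopt this normalization or track the index permutation relating the two presentations of the stratum; as written, your reduction would land on a different reducibility condition and fail to produce the claimed inclusion.
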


If we apply the proposition under the further restriction $x^3=0$, 
we find a subsystem isomorphic to $E_1$ in $E_2$, 
which is equivalent to Proposition 1.1. 

{We give three ``proof''s: one using power series,
one using integral representations, 
and one manipulating differential equations.}

\subsection{Power series}
{It is known that the system $E(3,6;a)|_{X_3}$
has a solution given by the series
\[
F_{X_3}(a_2,a_3,a_4,a_5,a_6;x):=\sum_{n_1,n_3,n_4=0}^\infty
{(a'_5,n_{13})(a'_6,n_4)(a_2,n_1)(a_3,n_{34})
\over (a_2+a_3+a_4,n_{134})n_1!n_3!n_4!}\
(x^1)^{n_1}(x^3)^{n_3}(x^4)^{n_4},
\]
where $x=(x^1,x^3,x^4),n_{13}=n_1+n_3, n_{34}=n_3+n_4, n_{134}=n_1+n_3+n_4, a'_5=1-a_5, a'_6=1-a_6$; refer to \cite[p.47]{MSY}. A computation shows that the identity
$$F_{X_3}(a_2,a_3,a_4,a_5,a_6;x)=(1-x^1)^{-a_2}F_D^{(3)}\left(a_3,a_4,1-a_6,a_2,1+a_3-a_5; x^3, x^4, 
\dsp{x^3-x^1\over 1-x^1}\right)$$
holds if and only if $a_2+a_4+a_5=1.$}
\subsection{Integral representation}
We manipulate the integral
$$\iint p^{a_1-1}q^{a_2-1}r^{a_3-1}
(p+q+r)^{a_4-1}(p+x_1q+x_3r)^{a_5-1}
(p+x_4r)^{a_6-1}dp\wedge dq.$$
The three lines
$$\ell_1: p=0,\quad \ell_3: r=0,\quad \ell_6:p+x_4r=0$$
meet at $0:1:0$.
Introduce new coordinate $Q$ 
$$q=\frac{(p+x_3)(p+1)Q}D,\quad D:=x_1(p+1)-p-x_3-x_1(p+1)Q,$$
which send $q=0,-p-1, -\frac{p+x_3}{x_1}$ to $Q=0,1,\infty$.
Since 
$$q+p+1=\frac{(p+1)N(Q-1)}D,\quad x_1q+p+x_3=-\frac{(p+x_3)N}D,$$
$$dq=-\frac{(p+x_3)(p+1)NdQ}{D^2}+*dp,\quad N=(1-x_1)p+x_3-x_1,$$
we have
$$f=-\iint p^{a_1-1}\{(p+x_3)(p+1)Q)\}^{a_2-1}
\{(p+1)N(Q-1)\}^{a_4-1}$$
$$\times\{-(p+x_3)N\}^{a_5-1}(p+x_4)^{a_6-1}(p+x_3)(p+1)N\cdot D^{1-a_{245}}dp\wedge dQ,$$
where $a_{245}=a_2+a_4+a_5$.
If  
$$a_{245}=1\qquad ({\rm Note\ }:\{2,4,5\}=\{1,\dots,6\}-\{1,3,6\}),$$
then the double integral above becomes the product of the Beta integral
$$\int Q^{a_2-1}(Q-1)^{a_4-1}dQ$$
and the integral
$$\int p^{a_1-1}(p+1)^{a_2+a_4-1}(p+x_3)^{a_2+a_5-1}(p+x_4)^{a_6-1}\{(1-x_1)p+x_3-x_1\}^{a_4+a_5-1}dp,$$
which can be written as 
$$(1-x_1)^{-a_2}\int p^{a_1-1}(p+1)^{-a_5}(p+x_3)^{-a_4}(p+x_4)^{a_6-1}\left(p+\frac{x_3-x_1}{1-x_1}\right)^{-a_2}dp.$$
On the other hand, {the integral 
$$\int p^{b_1+b_2+b_3-c}(p-1)^{c-a-1}(p-y^1)^{-b_1}(p-y^2)^{-b_2}(p-uy^3)^{-b_3}dp$$
solves $E^{(3)}_D(a,b_1,b_2,b_3,c;y^1,y^2,y^3)$.} By solving the system
$$a_1-1=b_1+b_2+b_3-c,\quad -a_5=c-a-1,\quad -a_4=b_1, \quad a_6-1=b_2,\quad -a_2=-b_3,$$
we complete the proof of Proposition \ref{X3FD}
\subsection{System of differential equations}\label{secondproof} 
We manipulate the system $EX_3$ (given in [MSY, p.24]):
\begin{eqnarray*}
&& (\theta + a_{234} -1)\theta_1 w - x^1(\theta_1+\theta_3 + 1-a_5)
(\theta_1+a_2)w=0, \\
&& (\theta + a_{234}-1)\theta_3w 
   - x^3(\theta_1+\theta_3+1-a_5)(\theta_3+\theta_4+a_3)w=0, \\
&& (\theta + a_{234}-1)\theta_4w 
   - x^4(\theta_4+1-a_6)(\theta_3+\theta_4+a_3)w=0, \\
&& x^3(\theta_1+\theta_3+1-a_5)\theta_4 w 
   - x^4(\theta_4 + 1 -a_6)\theta_3 w =0, \\
&& x^1(\theta_1+a_2)\theta_3 w - x^3(\theta_3+\theta_4+a_3)\theta_1w=0,
\end{eqnarray*}
where $\theta_1=x^1\partial/\partial x^1$, 
$\theta_3=x^3\partial/\partial x^3$, $\theta_4=x^4\partial/\partial x^4$, 
and $\theta=\theta_1+\theta_3+\theta_4$, and the Appell-Lauricella system
$E_D^{(3)}=E_D^{(3)}(a,b_1,b_2,b_3,c;y^1,y^2,y^3)$.
We show that the system $E_D^{(3)}$ is a subsystem of $EX_3$ when $a_2+a_4+a_5=1$.

Change the unknown $w$ of $EX_3$ into $u$ by
\[ w=(1-x^1)^{-p}u,\]
and the variables $(x^1,x^3,x^4)$ into $(x,y,z)$ as
\[x=x^3, \qquad y=x^4, \qquad z={x^3-x^1\over 1-x^1}.\]
Then $EX_3$ can be written as $L_i u=0,\ 1\le i \le 5,$
where
\begin{eqnarray*}
&& L_1:=\left(\delta_x+\delta_y+{1-z\over 1-x}\delta_z+h+ a_{234} -1\right)
        (f\delta_z +h) \\
&&\qquad\qquad   - {x-z\over 1-z}\left(\delta_x+{1-z\over 1-x}\delta_z + h + 1-a_5\right) (f\delta_z + h+ a_2), \\
&& L_2:=\left(\delta_x+\delta_y+{1-z\over 1-x}\delta_z+h+ a_{234} -1\right)
   (\delta_x+g\delta_z) \\
&& \qquad \qquad   - x\left(\delta_x+{1-z\over 1-x}\delta_z + h+ 1-a_5\right)
   (\delta_x+\delta_y+g\delta_z+a_3), \\
&& L_3:=\left(\delta_x+\delta_y+{1-z\over 1-x}\delta_z+h+ a_{234} -1\right)\delta_y \\
&&\qquad\qquad   - y(\delta_y +1-a_6)(\delta_x+\delta_y+g\delta_z+a_3), \\
&& L_4:=x\left(\delta_x+{1-z\over 1-x}\delta_z + h+ 1-a_5\right)\delta_y
  - y(\delta_y + 1 -a_6)(\delta_x + g\delta_z), \\
&& L_5:={x-z\over 1-z}(f\delta_z + h + a_2)(\delta_x+g\delta_z)
  - x(\delta_x+\delta_y+g\delta_z+a_3)(f\delta_z +h),
\end{eqnarray*}
where
\[ f={(z-x)(1-z)\over z(1-x)}, \qquad g={x(1-z)\over z(1-x)}.\]
Write the system $E_D^{(3)}$ as $E_{xy}u=0,\dots, E_{zz}u=0$, where
\begin{eqnarray*}
& E_{xy}:=\delta_{xy} -( b_2y\delta_x - b_1x\delta_y)/(x-y), & \\
& E_{xz}:=\delta_{xz} -(b_3z\delta_x - b_1x\delta_z)/(x-z), & \\
& E_{yz}:=\delta_{yz} -( b_3z\delta_y - b_2y\delta_z)/(y-z), & \\
& E_{xx}:=\delta_{xx}+\delta_{xy}+\delta_{xz} -(
((a+b_1)x+1-c)\delta_x + b_1x(\delta_y+\delta_z+a))/(1-x), & \\
& E_{yy}:=\delta_{yy}+\delta_{xy}+\delta_{yz} -(
((a+b_2)y+1-c)\delta_y + b_2y(\delta_x+\delta_z+a))/(1-y), & \\
& E_{zz}:=\delta_{zz}+\delta_{xz}+\delta_{yz} -(
((a+b_3)z+1-c)\delta_z + b_3z(\delta_x+\delta_y+a))/(1-z). &
\end{eqnarray*}
Eliminating the second derivatives in $L_j$ by using $E_{**}$, we see that $L_j$ are linear combination, over $\C(x,y,z)$, of the $E_{**}$'s if and only if 
\[ a_2+a_4+a_5=1, (\leftrightarrow c=a+b_1+b_3)\]
and  
\[ p=a_2,\quad
a=a3, \quad b_1= a_4, \quad b_2= 1-a_6,\quad b_3=a_2, \quad c=1+a_3-a_5.\]
This completes the proof of Proposition \ref{X3FD}.
\par\medskip\noindent
{\bf Remark:} Actually we have, under the condition $c=a+b_1+b_3$,
$$\langle L_1,L_2,L_5\rangle =\langle E_{xx},E_{xz},E_{zz}\rangle,$$$$
\langle L_3,L_4\rangle =\langle E_{yy}-\frac{(z-x)(z-y)}{z(1-x)(1-y)}E_{yz},\ 
E_{yy}-\frac{(x-z)(x-y)}{x(1-z)(1-y)}E_{yx}
\rangle. $$
\section{Families of curves of genus 2}\label{genus2}
We encountered  a family of curves $C_t$ of genus 2 given as triple covers of $\P^1$. This is the Case 3 in the following Proposition.
\begin{proposition} A cyclic cover of $\P^1$ branching at four points is 
of genus 2 only in three cases:
$$\begin{array}{llll}
{\rm Case\ 3:}&3\ &{\rm fold\ cover\ with\ indices}\quad &3,\ 3,\ 3,\ 3,\\
{\rm Case\ 6:}&6\ &{\rm fold\ cover\ with\ indices}\quad &2,\ 2,\ 3,\ 3,\\
{\rm Case\ 4:}&4\ &{\rm fold\ cover\ with\ indices}\quad &2,\ 2,\ 4,\ 4. 
\end{array}$$
\end{proposition}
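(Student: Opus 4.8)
The plan is to describe such a cover by its monodromy data and reduce the classification to a short Diophantine analysis via Riemann--Hurwitz. A connected $n$-fold cyclic cover of $\P^1$ branched at four points $p_1,\dots,p_4$ corresponds to a surjection $\pi_1(\P^1\setminus\{p_1,\dots,p_4\})\to\Z/n\Z$, equivalently to an affine model $w^n=\prod_{i=1}^4(z-p_i)^{a_i}$ with exponents $a_i\in\Z/n\Z\setminus\{0\}$ subject to $a_1+\cdots+a_4\equiv0\pmod n$ (no branching away from the $p_i$) and $\gcd(a_1,\dots,a_4,n)=1$ (connectedness). The ramification index over $p_i$ is then $e_i=n/\gcd(a_i,n)$, and the requirement that the cover genuinely branch at all four points is $e_i\ge2$. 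First I would record the genus by Riemann--Hurwitz: over $p_i$ there are $n/e_i$ points each of ramification $e_i$, so
$$2g-2=-2n+\sum_{i=1}^4 n\Big(1-\frac1{e_i}\Big)=n\Big(2-\sum_{i=1}^4\frac1{e_i}\Big),$$
and hence $g=2$ is equivalent to the single relation
$$\sum_{i=1}^4\frac1{e_i}=2-\frac2n.\qquad(\ast)$$

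Next I would use connectedness to eliminate $n$ as an independent variable. Since $\gcd(a_i,n)=n/e_i$, surjectivity reads $\gcd(n/e_1,\dots,n/e_4)=1$, and by the elementary identity $\gcd_i(n/e_i)=n/\mathrm{lcm}(e_1,\dots,e_4)$ this means exactly $n=\mathrm{lcm}(e_1,\dots,e_4)$. Thus $(\ast)$ becomes a constraint on the unordered tuple $(e_1,\dots,e_4)$ of integers $\ge2$ alone, with $n$ determined as their least common multiple. Ordering $e_1\le\cdots\le e_4$: since each $e_i\le n$, relation $(\ast)$ forces $4/n\le2-2/n$, i.e. $n\ge3$; and since $e_1$ is smallest, $4/e_1\ge2-2/n\ge4/3$, i.e. $e_1\in\{2,3\}$. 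This bounds the search. If $e_1=3$ then all $e_i\ge3$, so $\sum1/e_i\le4/3$ forces equality and the tuple $(3,3,3,3)$ with $n=3$. If $e_1=2$, a short case check on the number of indices equal to $2$ produces exactly $(2,2,4,4)$ with $n=4$, $(2,2,3,3)$ with $n=6$, and the apparent solution $(2,2,2,6)$ with $n=6$.

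The one genuine subtlety — and the step I expect to be the crux — is that $(\ast)$ together with $n=\mathrm{lcm}(e_i)$ is necessary but not sufficient: one must still verify that exponents $a_i$ of the prescribed orders $e_i$ can be chosen with $a_1+\cdots+a_4\equiv0\pmod n$. For the tuple $(2,2,2,6)$ with $n=6$ the only admissible exponents are $a_i=3$ for the three indices $2$ and $a_4\in\{1,5\}$ for the index $6$, whence $a_1+\cdots+a_4\equiv3+a_4\not\equiv0\pmod6$; so this candidate, though it satisfies the genus equation, cannot be realized by a connected cover branched at exactly four points, and is discarded. For the remaining three tuples I would exhibit admissible exponents explicitly — $(1,1,2,2)$ for $n=3$, $(2,2,1,3)$ for $n=4$, and $(3,3,2,4)$ for $n=6$ — each satisfying the sum condition and generating $\Z/n\Z$, which completes the proof that Cases $3$, $6$, $4$ are exactly the possibilities. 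The family $C_t$ of the paper is the instance $(2,1,2,1)$ of Case $3$.
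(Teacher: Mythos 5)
Your proposal is correct, and its core is the same as the paper's: the Riemann--Hurwitz (Euler characteristic) relation $\sum_i 1/e_i + 2/n = 2$ together with the connectedness condition $n = \mathrm{lcm}(e_1,\dots,e_4)$, followed by a finite case check. The paper states exactly these two conditions and then asserts that ``it is easy to see that only three cases above are possible,'' leaving the enumeration to the reader.

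Where you go beyond the paper --- and this is worth recording --- is in noticing that those two conditions alone are \emph{not} sufficient: the tuple $(2,2,2,6)$ with $n=6$ satisfies both $\tfrac12+\tfrac12+\tfrac12+\tfrac16+\tfrac26=2$ and $\mathrm{lcm}(2,2,2,6)=6$, so it survives the paper's stated criteria. Your exclusion of it via the monodromy description is the right fix: writing the cover as $w^n=\prod_i(z-p_i)^{a_i}$, the local exponents must satisfy $a_1+\cdots+a_4\equiv 0\pmod n$ (the product of the four local monodromies is trivial in $\pi_1$), and for $(2,2,2,6)$ this forces $3+3+3+a_4\equiv 0\pmod 6$ with $a_4\in\{1,5\}$, which is impossible. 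Your explicit exponent tuples $(1,1,2,2)$, $(2,2,1,3)$, $(3,3,2,4)$ confirm realizability of the three surviving cases, and your identification of $C_t$ with the exponents $(2,1,2,1)$ of Case 3 matches the paper. In short: same method, but your version closes a small gap that the paper's ``easy to see'' conceals; the case analysis genuinely requires the sum-of-exponents condition, not just the genus equation and the lcm condition.
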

Indeed, since the $n$ fold cyclic cover $C$ of $\P^1$ branching 
at four points with indices $k_1,\dots,k_4$ has Euler characteristic
$$2n-\sum_{i=1}^4\frac{n}{k_i}(k_i-1),$$
if we assume the genus of $C$ is two (Euler characteristic of $C$ is $-2$), 
we have
$$\sum_{i=1}^4\frac1{k_i}+\frac2n=2,\quad {\rm l.c.m.}(k_1,\dots,k_4)=n;$$
it is easy to see that only three cases above are possible. 
\par\medskip
The three cases can be realized by the following families of curves:
$$\begin{array}{llll}
{\rm Case\ 3:}&C_t^{(3)}: &S^3=s^2(1-s)(t-s)^2,&\quad t:{\rm \ parameter},\\[2mm]
{\rm Case\ 6:}&C_t^{(6)}: &S^6=s^2(1-s)^4(t-s)^3,&\quad t:{\rm \ parameter},\\[2mm]
{\rm Case\ 4:}&C_t^{(4)}: &S^4=s^2(1-s)^2(s-t),&\quad t:{\rm \ parameter}.
\end{array}$$
Note that the double cover of the base space of Case 6 branching 
at the two points of index 2 is equivalent to Case 3.
\end{appendix}

\end{document}